\crefname{hypothesis}{Hypothesis}{Hypotheses}
  \newcommand{\IR}{\ensuremath\mathds{R}}                        % Set of real numbers.
  \newcommand{\IP}{\ensuremath\mathds{P}}                        % Set of polynomials.
\newcommand*{\setE}{\ensuremath{\mathcal{T}}}                    % Partition of the domain.
\newcommand*{\Gammah}{\Gamma_h}                                  % Interior faces.
\newcommand*{\Nst}{N_\mathrm{st}}                                % Number of time steps.
\renewcommand*{\vec}[1]{{\boldsymbol{#1}}}                       % Vector.
\DeclareMathAlphabet{\mathbfsf}{\encodingdefault}{\sfdefault}{bx}{n}
\newcommand*{\vecc}[1]{\mathbfsf{#1}}                            % Tensor or matrix.
\newcommand*{\transpose}[1]{{#1}^\mathrm{T}}                     % Transposition.
\newcommand*{\normal}{\vec{n}}                                   % Unit outward normal.
\newcommand*{\grad}{\vec{\nabla}}                                % Gradient.
\renewcommand*{\div}{\vec{\nabla}\cdot}                          % Divergence.
\newcommand*{\laplace}{\upDelta}                                 % Laplace operator. 
\newcommand*{\strain}{{\boldsymbol{\varepsilon}}}                % Deformation tensor in NS equation.
\newcommand*{\llbrace}{\lbrace\hspace*{-0.18em}\vert}
\newcommand*{\rrbrace}{\vert\hspace*{-0.18em}\rbrace}
\newcommand*{\avg}[1]{\llbrace{#1}\rrbrace}                      % Average operator.
\newcommand*{\jump}[1]{\left\llbracket{#1}\right\rrbracket}      % Jump operator.
\newcommand*{\abs}[1]{\ensuremath{|#1|}}                         % Absolute value or volume.
\newcommand*{\norm}[2]{\|#1\|_{#2}}                              % Norm.
\newcommand*{\on}[2]{\left.#1\right\vert_{#2}}                   % restriction on a set, i.e., eg u|G.
\newcommand{\upwind}[1]{#1^{\uparrow}}                           % Upwind symbol. 
\newcommand*{\Rey}{\mathrm{Re}}                                  % Reynolds number.
\newcommand*{\Ca}{\mathrm{Ca}}                                   % Capillary number.
\newcommand*{\Pe}{\mathrm{Pe}}                                   % P\'eclet number.
\newcommand*{\Cn}{\mathrm{Cn}}                                   % Cahn number. 
  \newcolumntype{R}{>{\raggedleft\arraybackslash}X}
  \newcolumntype{L}{>{\raggedright\arraybackslash}X}
  \newcolumntype{C}{>{\centering\arraybackslash}X}
\title{ A simple and efficient convex optimization based bound-preserving high order accurate limiter for Cahn--Hilliard--Navier--Stokes system
\thanks{Submitted to the editors DATE.
\funding{X.Z. is supported by NSF DMS-2208518. J. S. is supported by NSFC 12371409. }}}
\author{Chen Liu\thanks{Department of Mathematics, Purdue University, 150 North University Street, West Lafayette,
Indiana 47907 (\email{liu3373@purdue.edu},   \email{zhan1966@purdue.edu}).}
\and Beatrice Riviere\thanks{Department of Computational Applied Mathematics and Operations Research, Rice University, 6100 Main Street, Houston, Texas 77005 (\email{riviere@rice.edu}).}
\and Jie Shen \thanks{
Eastern Institute of Technology, Ningbo, Zhejiang 315200, P.R. China (\email{jshen@eitech.edu.cn}). }
\and Xiangxiong Zhang \footnotemark[2]}
\begin{document}

\maketitle

% REQUIRED
\begin{abstract}
For time-dependent PDEs, 
the numerical schemes can be rendered bound-preserving without losing conservation and accuracy, by a post processing procedure of  
solving a constrained minimization in each time step. Such a constrained optimization can be formulated as a nonsmooth convex minimization, which can be efficiently solved by first order optimization methods, if using the optimal algorithm parameters. By analyzing the asymptotic linear convergence rate of the generalized Douglas--Rachford splitting method, optimal algorithm parameters can be approximately expressed as a simple function of  the number of out-of-bounds cells. 
We demonstrate the efficiency of this simple choice of algorithm parameters by applying such a limiter to cell averages of a discontinuous Galerkin scheme solving phase field equations for 3D demanding problems. 
Numerical tests on a sophisticated 3D Cahn--Hilliard--Navier--Stokes system indicate that the limiter is high order accurate, very efficient, and well-suited for large-scale simulations. For each time step, it takes at most $20$ iterations for the Douglas--Rachford splitting to enforce bounds and conservation up to the round-off error, for which the computational cost is at most $80N$ with $N$ being the total number of cells. 
\end{abstract}

% REQUIRED
\begin{keywords}
Douglas--Rachford splitting, nearly optimal parameters, bound-preserving limiter, discontinuous Galerkin method, Cahn--Hilliard--Navier--Stokes, high order accuracy 
\end{keywords}

% REQUIRED
\begin{MSCcodes}
65K10, 65M60, 65M12, 90C25
\end{MSCcodes}

%%%%%%%%%%%%%%%%%%%%%%%%%%%%%%%%%%%%%%%%%%%%%%%%%%%%%%%%%%%%%%%%%%%%%%%%%%%%%%%%%%%%%%%%%%%%%%%%%%%%%%%%%%%%%%%%%%%%%%%
\section{Introduction}
%%%%%%%%%%%%%%%%%%%%%%%%%%%%%%%%%%%%%%%%%%%%%%%%%%%%%%%%%%%%%%%%%%%%%%%%%%%%%%%%%%%%%%%%%%%%%%%%%%%%%%%%%%%%%%%%%%%%%%%

\subsection{Objective and motivation}
We are interested in a simple approach to enforce bound-preserving property of a high order accurate scheme for  phase field models, without destroying conservation and accuracy. 
Many numerical methods, especially high order accurate schemes, do not preserve bounds. 
For the sake of both physical meaningfulness
and robustness of numerical computation, it is critical to enforce both conservation and bounds.

\par
Bound-preserving schemes have been well studied in the literature for equations like hyperbolic and parabolic PDEs.
One popular approach of constructing a bound-preserving high order scheme was introduced   in \cite{zhang2010maximum,zhang2010positivity} for conservation laws, which can be extended to  parabolic equations \cite{sun2018discontinuous, srinivasan2018positivity} and  Navier--Stokes equations \cite{fan2022positivity, zhang2017positivity}, as well as implicit or semi-implicit time discretizations \cite{qin2018implicit,liu2023positivity}. 
However, this method, and most of other popular bound-preserving schemes for conservation laws and parabolic equations such as exponential time differencing \cite{du2021maximum}, 
are  based on the fact that   the simplest low order scheme is bound-preserving, which is  no longer true for a fourth order PDE like the Cahn--Hilliard equation, unless a very special implementation is used such as implicit treatment of a logarithmic potential \cite{CHEN2019100031}. 

A simple cut-off without enforcing conservation does not destroy accuracy but it is of little interest, because convergence might be lost due to loss of conservation.
A meaningful objective is to enforce bounds without destroying conservation. For the Cahn--Hilliard equation, an exponential function transform approach was used in \cite{huang2022bound}, with conservation achieved up to some small time error. 
If the logarithmic  energy potential  is used and treated implicitly, bounds can also be ensured \cite{CHEN2019100031}. 
A  Lagrange multiplier approach  in \cite{cheng2022new, cheng2022new-2}  provides a new interpretation for the cut-off method, and can preserve mass by solving a nonlinear algebraic  equation for the additional space independent Lagrange multiplier. Even though  the flux limiting \cite{kuzmin2009explicit, xu2014parametrized, guermond2019invariant, ern2022invariant} can be formally extended to Cahn--Hilliard equation \cite{frank2020bound, liu2022pressure}, it is not clear whether flux limiters can preserve high order accuracy for a fourth order PDE.
\textcolor{black}{Recently a bound-preserving finite volume scheme, which is first order accurate in time and second order accurate in space, has been constructed for the Cahn--Hilliard equation \cite{bailo2023unconditional}.}

In practice, the logarithmic potential  causes additional  difficulty in nonlinear system solvers in many schemes, thus the   double well polynomial potential with a degenerate mobility is often used as an easier surrogate. 
With the double well potential,  numerical schemes might violate the bounds much more since it does not enforce bounds  $\phi\in[-1, 1]$ like the log potential. 
In this paper, we will explore a simple and efficient high order accurate post processing procedure for preserving bounds and conservation up to round-off errors, such that it can be easily applied to any  numerical method solving the Cahn--Hillard equation,  especially for the polynomial potential.

\subsection{A  bound-preserving limiter via convex minimization}

Consider a scalar PDE as an example. Assume its  solution $u$ satisfies $m \leq u\leq M$ for all time and location, where $m$ and $M$ are constant bounds.  For simplicity, we only consider enforcing cell averages in a high order accurate discontinuous Galerkin (DG) scheme by the convex minimization, then using
the simple   Zhang--Shu limiter in \cite{zhang2010maximum,zhang2010positivity} to enforce bounds of point values of the DG solution.  
But this convex minimization approach can be easily extended to enforcing bounds of point values for any other numerical scheme such as finite difference and continuous finite element methods. 
\par 
Let $\bar u_i$ ($i=1,\cdots,N$) be all the DG solution cell averages at time step $n$ on a uniform mesh. Given $\vec{u}=\transpose{\begin{bmatrix} \bar{u}_1 & \bar{u}_2 &\cdots &\bar{u}_N\end{bmatrix}}\in \IR^N$, 
we would like to post process it to $\vec x=\transpose{\begin{bmatrix} x_1 &  x_2 &\cdots &  x_N\end{bmatrix}}\in \IR^N$ such that it is bound preserving $x_i\in[m, M]$, conservative $\sum_i x_i=\sum_i \bar u_i$, and accurate in the sense that $\|\vec x-\vec u\|$ should be small. Namely, we   consider minimize
$\|\vec x-\vec u\|$ under constraints $x_i\in[m, M]$
and $\sum_{i=1}^N x_i=\sum_{i=1}^N \bar u_i.$
 To change as few cell averages as possible, the convex  $\ell^1$-norm is often used to approximate the NP-hard $\ell^0$-norm. The $\ell^1$-norm is nonsmooth   without any strong convexity, thus the minimization might still be too expensive to solve. For the sake of efficiency, we propose the $\ell^2$-norm instead: \begin{equation}
\label{formulation-opt2}
    \min_{\vec x} \|\vec x-\vec u\|_2^2\quad\mathrm{s.t.}\quad x_i\in[m, M]
    ~~\text{and}~~ \sum_{i=1}^N x_i=\sum_{i=1}^N \bar u_i.
\end{equation}

Obviously, the minimizer to \eqref{formulation-opt2} is conservative and bound-preserving. The justification of accuracy is also straightforward, as long as $\vec u$ is an accurate numerical solution, which is a reasonable assumption and has been proved to hold for many DG schemes of a variety of PDEs, e.g., see \cite{liu2022convergence} for Cahn--Hilliard--Navier--Stokes (CHNS) equations.
Let $\bar{u}^\ast_i$ and $\bar{u}^0_i$ be the cell averages of the exact solution at time $t^n$ and initial condition, respectively. Then $\sum_i \bar{u}^\ast_i=\sum_i \bar{u}^0_i=\sum_i \bar{u}_i$ and $\bar{u}^\ast_i\in[m, M]$ imply that $\vec u^\ast$ is a feasible point satisfying the constraints of \eqref{formulation-opt2}. The minimizer $\vec x^\ast$ to \eqref{formulation-opt2}
then satisfies $\|\vec x^\ast-\vec u\|_2\leq \|\vec u^\ast-\vec u\|_2$, thus $\|\vec x^\ast-\vec u^\ast\|_2\leq \|\vec x^\ast-\vec u\|_2+\|\vec u-\vec u^\ast\|_2\leq 2\|\vec u^\ast-\vec u\|_2$. Therefore,  the limiter \eqref{formulation-opt2} does  not lose the order of accuracy.
% See Appendix~\ref{sec:appendix} for a more detailed discussion.

\subsection{Efficient convex optimization algorithms}
The main catch of using \eqref{formulation-opt2} in a large scale computation, is  the possible huge  cost of solving \eqref{formulation-opt2}  to machine accuracy,   unless proven or shown otherwise, which is our main focus.   
It is a convention use the indicator function $\iota_\Omega(\vec x) =
\begin{cases}
0, & \vec{x}\in \Omega\\
+\infty, & \vec{x}\notin\Omega 
\end{cases}$ for any set $\Omega$,  
 to rewrite  \eqref{formulation-opt2} as:
\begin{equation}
\label{formulation-opt3}
    %\min_{\vec x} \frac{\alpha}{2}\|\vec x-\vec u\|_2^2+\iota_{\{\vec{x}:\, \sum_i x_i=\sum_i \bar u_i\}}+\iota_{\{\vec{x}:\, x_i\in[m, M]\}},
    \min_{\vec x} \frac{\alpha}{2}\|\vec x-\vec u\|_2^2+\iota_{\Lambda_1}(\vec{x})
    +\iota_{\Lambda_2}(\vec{x}), 
\end{equation}
where $\alpha>0$ is a parameter and the sets $\Lambda_1$ and $\Lambda_2$ are
$ \Lambda_1 = \{\vec{x}:\, \sum_i x_i=\sum_i \bar u_i\}, \quad
\Lambda_2 = \{ \vec{x}:\, x_i\in[m, M]\}.$
The two indicator functions in \eqref{formulation-opt3} are convex but nonsmooth, and the $\ell^2$ term is strongly convex, thus
\eqref{formulation-opt3} has a unique minimizer $\vec x^\ast$.
Many   optimization algorithms, e.g., fast proximal gradient (FISTA) \cite{nesterov2013gradient, beck2017first} applied to \eqref{formulation-opt3}, can be proven to converge linearly. But a provable global linear rate is usually quite pessimistic, much slower than the actual convergence rate. It is possible to obtain sharp asymptotic   rate   for  methods like the generalized  Douglas--Rachford splitting solving $\ell^1$ minimization \cite{demanet2016eventual}, which can be used for designing best parameters. 
So we consider the generalized Douglas--Rachford splitting    \cite{lions1979splitting}, which  is equivalent to some other popular   methods such as PDHG \cite{chambolle2016introduction}, ADMM \cite{fortin2000augmented}, dual split Bregman \cite{goldstein2009split}, see also \cite{demanet2016eventual} and references therein for the equivalence.

\subsection{The generalized Douglas--Rachford splitting method}\label{sec:DR_splitting}
Splitting algorithms naturally arise for composite optimization of the form 
\begin{subequations}
\label{formulation-opt4}
\begin{equation}\min_{\vec{x}}f(\vec{x}) + g(\vec{x}),
\label{formulation-opt4-1}
\end{equation}
where functions $f$ and $g$ are convex and have simple subdifferentials and resolvents.
Let $ \partial{f}$ and $\partial{g}$ denote the subdifferentials of $f$ and $g$. Their resolvents are defined as
$$\mathrm{J}_{\gamma \partial{f}} = (\mathrm{I}+\gamma \partial{f})^{-1}= \mathrm{argmin}_{\vec{z}} \gamma f(\vec{z}) + \frac{1}{2}\norm{\vec{z}-\vec{x}}{2}^2,\quad \gamma>0,$$
$$\mathrm{J}_{\gamma \partial{g}} = (\mathrm{I}+\gamma \partial{g})^{-1}= \mathrm{argmin}_{\vec{z}} \gamma g(\vec{z}) + \frac{1}{2}\norm{\vec{z}-\vec{x}}{2}^2,\quad \gamma>0.$$
We rewrite \eqref{formulation-opt3} into $\min_{\vec{x}}f(\vec{x}) + g(\vec{x})$
by defining\begin{equation}\label{formulation-opt4-2}
f(\vec{x}) = \frac{\alpha}{2}\norm{\vec{x}-\vec{u}}{2}^2 + \iota_{\Lambda_1}(\vec{x})
\quad\text{and}\quad
g(\vec{x}) = \iota_{\Lambda_2}(\vec{x}),
\end{equation} 
\end{subequations}
where two sets are $\Lambda_1=\{\vec{x}:\vecc{A}\vec{x} = b\}$ and $\Lambda_2=\{\vec{x}\!:m\leq\vec{x}\leq M\}$, with $\vecc{A}=\begin{bmatrix}
    1 & \cdots & 1
\end{bmatrix}$, $b=\sum_i \bar u_i$, and $m\leq\vec{x}\leq M$ denoting entrywise inequality. 
The subdifferentials and resolvents can be explicitly given as
\begin{equation}\label{eq:subdiff_f}
\partial{f}(\vec{x}) = \alpha(\vec{x} - \vec{u}) + \mathcal{R}(\transpose{\vecc{A}}), \quad  \mathrm{J}_{\gamma \partial{f}}(\vec{x}) 
= \frac{1}{\gamma\alpha+1}\big(\vecc{A}^+(b - \vecc{A}\vec{x}) + \vec{x}\big) + \frac{\gamma\alpha}{\gamma\alpha+1}\vec{u},
\end{equation}
\begin{align}\label{eq:subdiff_g}
[\partial{g}(\vec{x})]_i =
\begin{cases}
[0,+\infty], & \text{if}~ x_i = M,\\
0,           & \text{if}~ x_i\in(m,M),\\
[-\infty,0], & \text{if}~ x_i = m.
\end{cases}
\quad  [\mathrm{J}_{\gamma \partial{g}}(\vec{x})]_i= \min{(\max{(x_i, m)},M)},
\end{align}
where $\mathcal{R}(\transpose{\vecc{A}})$ denotes the range of the matrix $\transpose{\vecc{A}}$
and $\vecc{A}^{+} = \transpose{\vecc{A}}(\vecc{A}\transpose{\vecc{A}})^{-1}$.

Define reflection operators as $\mathrm{R}_{\gamma \partial f} = 2\mathrm{J}_{\gamma \partial f} - \mathrm{I}$ and $\mathrm{R}_{\gamma \partial g} = 2\mathrm{J}_{\gamma \partial g} - \mathrm{I}$, where $\mathrm{I}$ denotes the identity operator.
The generalized Douglas--Rachford splitting for \eqref{formulation-opt4-1} can be written as: 
\begin{equation}\label{gDR}
\begin{cases}
\vec{y}^{k+1} 
= \lambda{\displaystyle\frac{\mathrm{R}_{\gamma \partial f}\mathrm{R}_{\gamma \partial g} + \mathrm{I}}{2}} \vec{y}^k + (1-\lambda) \vec{y}^k
= \lambda\mathrm{J}_{\gamma \partial f}\circ(2\mathrm{J}_{\gamma \partial g} - \mathrm{I}) \vec{y}^k + (\mathrm{I} - \lambda\mathrm{J}_{\gamma \partial g}) \vec{y}^k \\
\vec{x}^{k+1} = \mathrm{J}_{\gamma \partial g}(\vec{y}^{k+1})
\end{cases}\hspace{-0.5cm}.
\end{equation}
where $\vec{y}$ is an auxiliary variable, $\gamma>0$ is step size, and $\lambda\in(0,2)$ is a parameter.  For two convex functions $f(\vec x)$ and $g(\vec x)$, \eqref{gDR} converges for any $\gamma>0$ and any fixed  $\lambda\in(0,2)$, see \cite{lions1979splitting}. If one function is   strongly convex, then  $\lambda=2$ also converges.

\subsection{The bound-preserving post processing procedure for DG schemes}
\label{sec-intro-limiter}
At time step $n$,  let $u_i(x,y,z)$ be the DG polynomial on a uniform mesh in the $i$-th cell with cell average $\bar u_i$. We define the following bound-preserving limiter:

{\bf Step I}:  Solve \eqref{formulation-opt3} to post process the cell averages. 
Let $c=\frac{1}{\alpha\gamma+1}$, then the iteration \eqref{gDR} on \eqref{formulation-opt4} can be explicitly written as: 
\begin{subequations}
    \label{gDR-average}
\begin{equation}
\label{gDR2}
\begin{cases}
\vec x^{k} &=  \min{(\max{(\vec y^k, m)},M)}\\
\vec z^k&=2\vec x^k-\vec y^k\\ %=\vec x^k+\vec x^k-\vec y^k\\
\vec{y}^{k+1} 
&= \lambda c (\vec z^k-\vec 1[\frac{1}{N}(\sum_iz_i^k-b)])+ \lambda (1-c)\vec u+\vec{y}^k -\lambda \vec{x}^k
\end{cases},
\end{equation}
where $\vec 1$ is the constant one vector of size $N$ and $b=\sum_i\bar u_i$ is a constant, $\lambda\in(0,2]$ is the fixed relaxation parameter.  
Each iterate $\vec x^k$ is bound-preserving but is not conservative until converging to the minimizer $\vec x^\ast$. We iterate \eqref{gDR2} until 
relative change is small enough $\norm{\vec{y}^{k+1} - \vec{y}^{k}}{2} \leq \epsilon$, to get an approximated
 minimizer $\vec x^\ast$ to \eqref{formulation-opt3}, for which the conservation would be satisfied up to round-off errors. 
We then modify DG polynomials by modifying the cell averages, i.e., shift them by a constant:
\begin{equation}
    \widetilde{u}_i(x,y,z)={u}_i(x,y,z)-\bar u_i+x^*_i,\quad i=1,\cdots, N.
\end{equation}
\end{subequations}

{\bf Step II}: Cell averages of modified DG polynomials $\widetilde{u}_i(x,y,z)$ are in the range $[m, M]$, so we can apply the simple scaling limiter by Zhang and Shu in \cite{zhang2010maximum,zhang2010positivity} to further enforce bounds at quadrature points, without losing conservation and accuracy. Let $S_i$ be the set of interested points in each cell, then the Zhang--Shu limiter for the polynomial $\widetilde{u}_i(x,y,z)$ with cell average $x^*_i\in [m,M]$ is given as
\begin{equation}
\label{zhang-shu}
\widehat{u}_i(x,y,z)=\theta(\widetilde{u}_i(x,y,z)-x^\ast_i)+x^\ast_i,\quad \theta=\min\left\{1, \frac{|m-x^\ast_i|}{|m_i-x^\ast_i|}, \frac{|M-x^\ast_i|}{|M_i-x^\ast_i|} \right\},
\end{equation}
where
$m_i=\min\limits_{(x,y,z)\in S_i} \widetilde{u}_i(x,y,z)$ and $M_i=\max\limits_{(x,y,z)\in S_i} \widetilde{u}_i(x,y,z)$.
See the appendix in \cite{zhang2017positivity} for a rigorous proof of the high order accuracy of \eqref{zhang-shu}. 

 \textcolor{black}{We emphasize that the Zhang-Shu limiter \eqref{zhang-shu} can preserve bounds or positivity provided that the cell averages are within bounds or are positive, which can be proven for DG methods coupled with the limiter \eqref{zhang-shu} for hyperbolic problems including scalar conservation laws, compressible Euler and compressible Navier-Stokes equations \cite{zhang2010maximum,zhang2010positivity, zhang2017positivity}, because DG methods with suitable numerical fluxes satisfy a weak monotonicity property for these problems \cite{zhang2017positivity}. However, such a weak monotonicity property is simply not true for high order DG schemes solving fourth order PDEs. Thus, if using only the limiter \eqref{zhang-shu}, the high order DG methods will not be bound-preserving for Cahn-Hilliard equations.  For all the numerical tests shown in this paper, DG methods with only the Zhang-Shu limiter will produce cell averages outside of the range $[-1,1]$.}

\subsection{The main results}
We will analyze asymptotic  convergence rate of iteration \eqref{gDR2} and give a sharp convergence rate formula, by which it is possible to pick up nearly optimal combination of parameters $c=\frac{1}{\alpha\gamma+1}$ and $\lambda$ to achieve fast convergence for the iteration \eqref{gDR2}. 
The   asymptotic linear convergence rate we derive for \eqref{formulation-opt3} is similar to the one for $\ell^1$ minimization in \cite{demanet2016eventual}. 
These rate formulae depend on the unknown $\vec x^\ast$, so usually it is impossible to use the formulae for tuning algorithm parameters, unless $\vec x^\ast$ can be easily estimated. 
For \eqref{formulation-opt3},  it is possible to pick up a nearly optimal combination of optimization algorithm parameters by only calculating number of bad cells $\bar u_i\notin[m, M]$, which is the first main result of this paper.

Let $\hat r$ be the
number of bad cells $\bar u_i\notin[m, M]$, and let $\hat \theta=\cos^{-1}\sqrt{\frac{\hat r}{N}}$, then our  analysis suggests the following simple choice of nearly optimal parameters:
\begin{equation}
    \begin{cases}
    c=\frac{1}{2}, \lambda = \frac{4}{2-\cos{(2\hat \theta)}}, &\quad \mbox{if } \hat \theta \in(\frac{3}{8}\pi,\frac{1}{2}\pi],\\
    c=\frac{1}{(\cos\hat\theta + \sin\hat\theta)^2}, \lambda =\frac{2}{1+\frac{1}{1+\cot\hat\theta}-\frac{1}{(\cos\hat\theta + \sin\hat\theta)^2}}, &\quad \mbox{if } \hat \theta \in(\frac{1}{4}\pi,\frac{3}{8}\pi],\\
  c=\frac{1}{(\cos\hat\theta + \sin\hat\theta)^2}, \lambda =2,  &\quad \mbox{if } \hat \theta \in(0,\frac{1}{4}\pi].
\end{cases}\label{opt-parameter}
\end{equation}
 We emphasize that both $c$ and $\lambda$ should be the constants w.r.t. 
 iteration index $k$ in \eqref{gDR2}, once they are chosen by \eqref{opt-parameter}.
Notice that $\lambda(1-c)\vec u$ is a constant for the iteration \eqref{gDR2} and each entry of $\vec z^k-\vec 1[\frac{1}{N}(\sum_iz_i^k-b)]$ can be computed by $z^k_i-[\frac{1}{N}(\sum_iz_i^k-b)]$, thus if only counting number of computing multiplications, $\min$, and $\max$, the computational complexity of each iteration in  \eqref{gDR2} is $4N$.  
By using the  parameters \eqref{opt-parameter}, it takes at most 20 iterations of \eqref{gDR2} to converge in all our numerical tests, thus the cost of iterating \eqref{gDR2} until convergence would be at most $80N$, which is highly efficient and well-suited for large-scale simulations.

The numerical observation of at most 20 iterations can also be explained by the asymptotic convergence rate analysis, which is another main result. Assuming the number of bad cells $\bar u_i\notin[m, M]$ is much smaller than the number of total cells $N$, we will show that the asymptotic convergence rate  of \eqref{gDR2} using \eqref{opt-parameter} is given by 
\begin{equation}
    -\frac{\cos{(2\theta )}}{2-\cos{(2\theta )}}\approx-\frac{\cos{(2\hat \theta)}}{2-\cos{(2\hat\theta)}}=\frac{1-2\cos{\hat\theta}^2}{3-2\cos{\hat\theta}^2}=\frac{1-2\frac{\hat r}{N}}{3-2\frac{\hat r}{N}}\approx \frac13,\quad \mbox{if } \hat r\ll N,
    \label{opt-rate}
\end{equation}
with $\theta(\vec x^*)$ being an unknown angle,  which can be approximated by $\hat \theta$.
If the ratio of bad cells is very small, \eqref{gDR2} will have a local convergence rate almost like $\|\vec y^k-\vec y^*\|\leq C \left(\frac13\right)^k$, which would take around 30 iterations to reach around 1E-15 if $C=1$. 

\subsection{Organization of the paper}
The rest of the paper is organized as follows.
In Section~\ref{sec:convergence_analysis}, we analyze the asymptotic linear convergence rate of the Douglas--Rachford splitting \eqref{gDR} and \eqref{gDR2}, and derive the parameter guideline \eqref{opt-parameter}. 
In Section~\ref{sec:application_chns}, we discuss an application of our bound-preserving limiting strategy to an important phase-field model, the CHNS system.
The numerical tests are given in Section~\ref{sec:experiments}. 
Section \ref{sec:remark} are concluding remarks.

%%%%%%%%%%%%%%%%%%%%%%%%%%%%%%%%%%%%%%%%%%%%%%%%%%%%%%%%%%%%%%%%%%%%%%%%%%%%%%%%%%%%%%%%%%%%%%%%%%%%%%%%%%%%%%%%%%%%%%%
\section{Asymptotic linear convergence rate analysis}\label{sec:convergence_analysis}
%%%%%%%%%%%%%%%%%%%%%%%%%%%%%%%%%%%%%%%%%%%%%%%%%%%%%%%%%%%%%%%%%%%%%%%%%%%%%%%%%%%%%%%%%%%%%%%%%%%%%%%%%%%%%%%%%%%%%%%
In this section, we derive the asymptotic linear convergence rate of generalized Douglas--Rachford splitting \eqref{gDR} for solving the minimization problem \eqref{formulation-opt4}.  
The discussion in this section follows closely the analysis for $\ell^1$ minimization in \cite{demanet2016eventual}. Even though $\ell^1$ minimization  is harder than $\ell^2$ minimization, the analysis for \eqref{formulation-opt4} is not necessarily a straightforward extension of those in \cite{demanet2016eventual} because 
\eqref{eq:subdiff_f} and \eqref{eq:subdiff_g} are different from operators in \cite{demanet2016eventual}. 

For convenience, let $F=\partial f$
and $G=\partial g$ denote the subdifferential operators.
Let $\mathrm{S}(\vec x)$ be the cut-off operator, i.e., $[\mathrm{J}_{\gamma G}(\vec x)]_i=[\mathrm{S}(\vec x)]_i=\min{(\max{(x_i, m)},M)}$.  

We keep the discussion a bit more general by considering a general linear constraint $\vecc A\vec x=b=\vecc A\vec u$ in the function $f(\vec x)$ in  \eqref{formulation-opt4-2}, and assume $\vecc A$ has less number of rows than the number of columns, with full row rank such that $\vecc A^+=\transpose{\vecc A}(\vecc A \transpose{\vecc A})^{-1}$ is well defined. When needed, we will plug in the special case $\vecc A= \begin{bmatrix}
    1 & 1 & \cdots & 1
\end{bmatrix}.$

\subsection{The fixed point set}\label{sec:structure_fix_p}
Let $\mathrm{P}(\vec{x}) = \vecc{A}^+(b - \vecc{A}\vec{x}) + \vec{x}$ denote the projection operator. 
Then, the resolvents can be written as $\mathrm{J}_{\gamma F}(\vec{x}) = \frac{1}{\gamma\alpha+1}\mathrm{P}(\vec{x}) + \frac{\gamma\alpha}{\gamma\alpha+1}\vec{u}$ and $\mathrm{J}_{\gamma G}(\vec{x}) = \mathrm{S}(\vec{x})$.
Let $\mathrm{T}_\gamma$ denote the iteration operator for $\vec y$ in \eqref{gDR}, then it  becomes:
\begin{equation}\label{eq:operator_T_gamma}
\mathrm{T}_\gamma 
= \frac{\lambda}{\gamma\alpha+1}\mathrm{P}\circ(2\mathrm{S}-\mathrm{I}) 
+ (\mathrm{I} - \lambda\mathrm{S})
+ \frac{\lambda\gamma\alpha}{\gamma\alpha+1}\vec{u}.
\end{equation}
The fixed point $\vec y^\ast$ of $\mathrm{T}_\gamma$ is not the minimizer of \eqref{formulation-opt4}, while $\vec x^\ast=\mathrm{J}_{\gamma G}(\vec y^\ast)=\mathrm{S}(\vec y^\ast)$ is the minimizer. 
The fixed point set of the operator $\mathrm{T}_\gamma$ has the following structure.
\begin{theorem}\label{thm:fixed_point_set}
The set of fixed point of operator $\mathrm{T}_\gamma$ is
$$\Pi = \{\vec{y}^\ast\!: \vec{y}^\ast = \vec{x}^\ast + \gamma\vec{\eta},~ \vec{\eta}\in -\partial f(\vec{x}^\ast) \cap \partial g(\vec{x}^\ast)\}.$$ 
\end{theorem}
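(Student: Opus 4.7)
The plan is to characterize fixed points of $\mathrm{T}_\gamma$ by unpacking the two resolvent identities hidden in the DR iteration, and then translating them into subdifferential inclusions via the definition $\mathrm{J}_{\gamma H} = (\mathrm{I}+\gamma H)^{-1}$. The key observation is that the variable $\vec x^\ast$ in the claimed parametrization should be identified with $\mathrm{S}(\vec y^\ast) = \mathrm{J}_{\gamma G}(\vec y^\ast)$, so the bulk of the work lies in showing that this choice is consistent with the fixed point equation.

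First I would write the iteration operator in the resolvent form
\[
\mathrm{T}_\gamma(\vec y) = \lambda\,\mathrm{J}_{\gamma F}\bigl(2\mathrm{S}(\vec y) - \vec y\bigr) + (\mathrm{I} - \lambda\mathrm{S})(\vec y),
\]
which is exactly the DR formula with $\mathrm{J}_{\gamma G} = \mathrm{S}$ and the closed-form $\mathrm{J}_{\gamma F}$ from \eqref{eq:subdiff_f}. The fixed point equation $\vec y^\ast = \mathrm{T}_\gamma(\vec y^\ast)$ then simplifies, after cancelling $\vec y^\ast$ and dividing by $\lambda>0$, to
\[
\mathrm{S}(\vec y^\ast) = \mathrm{J}_{\gamma F}\bigl(2\mathrm{S}(\vec y^\ast) - \vec y^\ast\bigr).
\]
Setting $\vec x^\ast := \mathrm{S}(\vec y^\ast) = \mathrm{J}_{\gamma G}(\vec y^\ast)$, I can then rewrite this as $\vec x^\ast = \mathrm{J}_{\gamma F}(2\vec x^\ast - \vec y^\ast)$.

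Next I would translate these two resolvent identities into inclusions. From $\vec x^\ast = \mathrm{J}_{\gamma G}(\vec y^\ast)$ together with $\mathrm{J}_{\gamma G}=(\mathrm{I}+\gamma G)^{-1}$ I get $\vec y^\ast - \vec x^\ast \in \gamma\,\partial g(\vec x^\ast)$. From $\vec x^\ast = \mathrm{J}_{\gamma F}(2\vec x^\ast - \vec y^\ast)$ I analogously get $2\vec x^\ast - \vec y^\ast - \vec x^\ast \in \gamma\,\partial f(\vec x^\ast)$, i.e.\ $\vec y^\ast - \vec x^\ast \in -\gamma\,\partial f(\vec x^\ast)$. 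Defining $\vec\eta := \gamma^{-1}(\vec y^\ast - \vec x^\ast)$ then yields
\[
\vec\eta \in -\partial f(\vec x^\ast) \cap \partial g(\vec x^\ast), \qquad \vec y^\ast = \vec x^\ast + \gamma\vec\eta,
\]
establishing one inclusion $\Pi \supseteq \{\text{fixed points}\}$. For the converse, I would take any $\vec y^\ast$ of this form, verify directly from the subdifferential inclusions that both resolvent identities above hold (using that $\mathrm{J}_{\gamma F}$ and $\mathrm{J}_{\gamma G}$ are single-valued by convexity), and then substitute back into $\mathrm{T}_\gamma(\vec y^\ast)$ to get $\lambda\vec x^\ast + (\vec y^\ast - \lambda\vec x^\ast) = \vec y^\ast$.

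Since $f$ is $\alpha$-strongly convex and $g$ is convex, $f+g$ has a unique minimizer, which is automatically the unique $\vec x^\ast$ appearing in the parametrization (the condition $\vec\eta \in -\partial f(\vec x^\ast)\cap \partial g(\vec x^\ast)$ is equivalent to the optimality condition $\vec 0 \in \partial f(\vec x^\ast)+\partial g(\vec x^\ast)$). The proof is essentially algebraic and I do not anticipate a real obstacle; the only point requiring a little care is ensuring that the cancellation of $\vec y^\ast$ in the fixed point equation is valid for the full range $\lambda\in(0,2]$ and that the single-valuedness of the two resolvents justifies treating the inclusions as equations when we reconstruct $\vec y^\ast$ from $(\vec x^\ast,\vec\eta)$.
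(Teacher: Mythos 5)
Your proof is correct, and it reaches the same two inclusions as the paper (identify $\vec x^\ast=\mathrm{S}(\vec y^\ast)$, set $\vec\eta=\gamma^{-1}(\vec y^\ast-\vec x^\ast)$, show $\vec\eta\in\partial g(\vec x^\ast)$ and $\vec\eta\in-\partial f(\vec x^\ast)$, and conversely), but by a genuinely different route: you work entirely at the level of the abstract resolvent identity $\mathrm{J}_{\gamma H}=(\mathrm{I}+\gamma H)^{-1}$, so that $\vec x^\ast=\mathrm{J}_{\gamma G}(\vec y^\ast)$ and $\vec x^\ast=\mathrm{J}_{\gamma F}(2\vec x^\ast-\vec y^\ast)$ translate immediately into $\vec y^\ast-\vec x^\ast\in\gamma\partial g(\vec x^\ast)$ and $\vec y^\ast-\vec x^\ast\in-\gamma\partial f(\vec x^\ast)$, with single-valuedness of the resolvents closing the converse. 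The paper instead verifies both directions by explicit computation with the concrete operators \eqref{eq:subdiff_f}--\eqref{eq:subdiff_g}: an entrywise case analysis of $\mathrm{S}$ against the three branches of $\partial g$, and an elimination of the Lagrange multiplier $\vec\xi\in(\vecc A\transpose{\vecc A})^{-1}\vecc A\vec\eta$ to obtain the identity $\mathrm{P}\circ(2\mathrm{S}-\mathrm{I})\vec y^\ast=(\gamma\alpha+1)\vec x^\ast-\gamma\alpha\vec u$. Your argument is shorter and applies verbatim to any proper closed convex $f,g$; what the paper's computation buys is the explicit entrywise description of where $\vec y^\ast$ sits relative to $m$ and $M$ (which feeds directly into the definition of the set $\mathcal Q$ and the standard/non-standard dichotomy) and the closed-form expression for $\mathrm{P}\circ(2\mathrm{S}-\mathrm{I})$ that is reused in the subsequent linearization of $\mathrm{T}_\gamma$. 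One small point to make explicit in your write-up: the theorem's $\vec x^\ast$ is the (unique) minimizer of \eqref{formulation-opt4}, so in the forward direction you should note, as you do in your closing paragraph, that $\vec\eta\in-\partial f(\vec x^\ast)\cap\partial g(\vec x^\ast)$ forces $\vec 0\in\partial f(\vec x^\ast)+\partial g(\vec x^\ast)\subseteq\partial(f+g)(\vec x^\ast)$, so the point $\mathrm{S}(\vec y^\ast)$ you constructed is indeed that minimizer.
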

\begin{proof}
We first show any $\vec{y}^\ast \in \Pi$ is a fixed point of the operator $\mathrm{T}_\gamma$.
$\forall\vec{\eta}\in\partial g(\vec{x}^\ast)$ in \eqref{eq:subdiff_g}, we have $\mathrm{S}(\vec{y}^\ast) = \vec{x}^\ast$, since the $i$-th entry of the vector $\vec{y}^\ast = \vec{x}^\ast + \gamma\vec{\eta}$ satisfies
\begin{equation*}
[\vec{y}^\ast]_i
\begin{cases}
\in[M,+\infty],  & \text{if}~ x^\ast_i = M,\\
=x^\ast_i,     & \text{if}~ x^\ast_i\in(m,M),\\
\in [-\infty,m], & \text{if}~ x^\ast_i = m.
\end{cases}
\end{equation*}
Thus, we have $\mathrm{P}\circ(2\mathrm{S}-\mathrm{I})\vec{y}^\ast = \mathrm{P}(2\vec x^*-\vec y^*)= \mathrm{P}(\vec x^*-\gamma \vec{\eta}) = \vec{x}^\ast - \gamma\vec{\eta} + \gamma\vecc{A}^+\vecc{A}\vec{\eta}$, where $\vecc{A}\vec x^\ast=b$ is used. 
And $\vec{\eta}\in -\partial f(\vec{x}^\ast)$ in \eqref{eq:subdiff_f} implies that there exists $\vec{\xi}$ such that $\vec{\eta} = -\alpha(\vec{x}^\ast-\vec{u}) + \transpose{\vecc{A}}\vec{\xi}$. Multiplying both sides by $\vecc{A}$, with $\vecc{A}\vec{x}^\ast =b= \vecc{A}\vec{u}$  we get $\vecc A\vec \eta=\vecc A\transpose{\vecc{A}}\vec{\xi}$, thus $\vec{\xi}=(\vecc A\transpose{\vecc{A}})^{-1}\vecc A\vec \eta$
and 
$ \gamma\vec{\eta} =- \gamma\alpha(\vec{x}^\ast-\vec{u})+\gamma\vecc{A}^+\vecc{A}\vec{\eta}$. Then, we have $\mathrm{P}\circ(2\mathrm{S}-\mathrm{I})\vec{y}^\ast = (\gamma\alpha+1)\vec{x}^\ast - \gamma\alpha\vec{u}$. 
Therefore
\begin{equation*}
\mathrm{T}_\gamma(\vec{y}^\ast) 
= \frac{\lambda}{\gamma\alpha+1} \Big((\gamma\alpha+1)\vec{x}^\ast - \gamma\alpha\vec{u}\Big)
+ \vec{y}^\ast - \lambda\vec{x}^\ast
+ \frac{\lambda\gamma\alpha}{\gamma\alpha+1}\vec{u}
= \vec{y}^\ast.
\end{equation*}
Next, we show any fixed point $\vec{y}^\ast$   belongs to set $\Pi$. Let $\vec{\eta} = (\vec{y}^\ast - \vec{x}^\ast)/\gamma$. Then, $\vec{y}^\ast$ being a fixed point   implies $\mathrm{J}_{\gamma G}(\vec{y}^\ast) = \vec{x}^\ast$.
Recall that $\mathrm{J}_{\gamma G}=\mathrm{S}$, we have
\begin{itemize}[topsep=2pt,noitemsep]
\item[\emph{i}.] if $x_i^\ast + \gamma\eta_i \geq M$, then $x_i^\ast=\mathrm{S}(x_i^\ast + \gamma\eta_i) = M$, thus $\eta_i \in [0,+\infty]$;
\item[\emph{ii}.] if $x_i^\ast + \gamma\eta_i \in (m,M)$, then $x_i^\ast=\mathrm{S}(x_i^\ast + \gamma\eta_i) = x_i^\ast + \gamma\eta_i$, thus $\eta_i = 0$;
\item[\emph{iii}.] if $x_i^\ast + \gamma\eta_i \leq m$, then $x_i^\ast=\mathrm{S}(x_i^\ast + \gamma\eta_i) = m$, thus $\eta_i \in [-\infty,0]$.
\end{itemize}
So $\vec{\eta}\in\partial{g}(\vec{x}^\ast)$.
And $\vec{y}^\ast = \mathrm{T}_\gamma(\vec{y}^\ast)$ is equivalent to $\vec{y}^\ast = \frac{\lambda}{2}(\mathrm{R}_{\gamma F}\mathrm{R}_{\gamma G} + \mathrm{I})\vec{y}^\ast + (1-\lambda)\vec{y}^\ast$, which implies $\vec{y}^\ast = \mathrm{R}_{\gamma F}\mathrm{R}_{\gamma G}(\vec{y}^\ast)$. Recall $\mathrm{J}_{\gamma G}(\vec{y}^\ast) = \vec{x}^\ast$ and $\vec{y}^\ast = \vec{x}^\ast + \gamma\vec{\eta}$, we have 
\begin{equation*}
\vec{y}^\ast
%= \mathrm{R}_{\gamma F}\mathrm{R}_{\gamma G}(\vec{y}^\ast) 
= \mathrm{R}_{\gamma F}(2\mathrm{J}_{\gamma G}(\vec{y}^\ast) - \vec{y}^\ast) 
%= \mathrm{R}_{\gamma F}(2\vec{x}^\ast - \vec{y}^\ast) 
= \mathrm{R}_{\gamma F}(\vec{x}^\ast - \gamma\vec{\eta})
= 2\mathrm{J}_{\gamma F}(\vec{x}^\ast - \gamma\vec{\eta}) - (\vec{x}^\ast - \gamma\vec{\eta}).
\end{equation*}
So $\vec{x}^\ast = \mathrm{J}_{\gamma F}(\vec{x}^\ast - \gamma\vec{\eta})$, 
which implies $\vec{x}^\ast = \mathrm{argmin}_{\vec{z}} \gamma f(\vec{z}) + \frac{1}{2}\norm{\vec{z}-(\vec{x}^\ast - \gamma\vec{\eta})}{2}^2$. By the critical point equation, we have $\vec{0}\in \gamma \partial f(\vec{x}^\ast) +  \gamma\vec{\eta}$ thus $\vec{\eta}\in-\partial{f}(\vec{x}^\ast)$.
\end{proof}
Let $\mathcal{B}_r(\vec{z})$ denote a closed ball in $\ell^2$-norm centered at $\vec{z}$ with radius $r$. Define set $\mathcal{Q}$:
\begin{equation*}
\mathcal{Q} = Q_1 \otimes Q_2 \otimes \cdots \otimes Q_n, \quad\text{where}~
Q_i =
\begin{cases}
[M,+\infty], & \text{if}~ x^\ast_i = M,\\
(m,M),       & \text{if}~ x^\ast_i \in (m,M),\\
[-\infty,m], & \text{if}~ x^\ast_i = m.
\end{cases}
\end{equation*}
For any fixed point $\vec{y}^\ast$, the Theorem~\ref{thm:fixed_point_set} implies there exists an $\vec{\eta} = \frac{1}{\gamma}(\vec{y}^\ast-\vec{x}^\ast) \in\partial g(\vec{x}^\ast)$ and by \eqref{eq:subdiff_g} we have $\vec{x}^\ast+\gamma\vec{\eta} \in \mathcal{Q}$ for any $\gamma>0$, which gives $\vec{y}^\ast\in\mathcal{Q}$. Let $\epsilon\geq0$ be the least upper bound such that $\mathcal{B}_\epsilon(\vec{y}^\ast) \subset \mathcal{Q}$.
If $\epsilon > 0$, then $\vec{y}^\ast$ is an interior fixed point and we call this the standard case; otherwise, $\vec{y}^\ast$ is a boundary fixed point and we call this the non-standard case.
In the standard case that the sequence $\vec{y}^k$ converges to an interior fixed point $\vec{y}^\ast$. There exists a large enough integer $K>0$ such that $\norm{\vec{y}^K - \vec{y}^\ast}{2}<\epsilon$ holds. For any $k\geq K$,   the operator $\mathrm{T}_\gamma$ is nonexpansive \cite{lions1979splitting}, so 
\begin{equation*}
\norm{\vec{y}^k - \vec{y}^\ast}{2} = \norm{\mathrm{T}_\gamma(\vec{y}^{k-1}) - \mathrm{T}_\gamma(\vec{y}^\ast)}{2} \leq \norm{\vec{y}^{k-1} - \vec{y}^\ast}{2} \leq \cdots \leq \norm{\vec{y}^K - \vec{y}^\ast}{2}<\epsilon.
\end{equation*}
Thus, after taking the generalized Douglas--Rachford iteration \eqref{gDR} sufficiently many times, the iterates will always belong to the ball $\mathcal{B}_\epsilon(\vec{y}^\ast)\subset \mathcal{Q}$, namely the iteration enters the asymptotic convergence regime and the cut-off location does not change.
\par
In the rest of this paper, we only focus on  the standard case. The non-standard case can be analyzed by utilizing the same technique as in \cite{demanet2016eventual}.
The non-standard case has not been observed in our numerical experiments.

\subsection{The characterization of the operator \texorpdfstring{$\mathrm{T}_{\gamma}$}{Tg2}}
Assume the unique solution $\vec{x}^\ast$ of the minimization problem \eqref{formulation-opt4} has $r$ components equal to $m$ or $M$.  
We further assume $r<N$, e.g., not all the cell averages will touch the boundary   $m$ or $M$, which is a quite reasonable assumption. 
We emphasize that $r$ is unknown, unless $\vec{x}^\ast$ is given.

Let $\vec{e}_i$ ($i=1,\cdots,N$) be the standard basis of $\IR^N$. Let $\vec{e}_j$ ($j=i_1, \cdots, i_r$) denote the basis vectors corresponding to entries $\vec{x}^\ast$  of being $m$ or $M$. 
Let $\vecc{B}$ be the corresponding $r\times N$ selector matrix, i.\,e., $\vecc{B} = \transpose{[\vec{e}_{i_1}, \cdots, \vec{e}_{i_r}]}$.

Recall that we only discuss the standard case, i.e., $\vec{y}^\ast$ is in the interior of $\mathcal{Q}$. 
Then, in the asymptotic convergence regime, i.e., after sufficiently many iterations, the iterate $\vec{y}_k$ will stay in the interior of $\mathcal{Q}$, thus
the operator $\mathrm{S}$ has an expression
\begin{equation}\label{eq:matrix_expression_S}
\mathrm{S}(\vec{y}) = \vec{y} - \vecc{B}^+\vecc{B}\vec{y} + \sum_{j\in{\{i_1,\cdots,i_r\}}} x^\ast_j \vec{e}_j.
\end{equation}
Note, the $j$-th component of $\vec{x}^\ast$, namely the $x^\ast_j$ in \eqref{eq:matrix_expression_S}, takes value $m$ or $M$ for any $j\in\{i_1, \cdots, i_r\}$.
Let $\vecc{I}_N$ denote an $N\times N$ identity matrix.
\begin{lemma}
For any $\vec{y}$ in the interior of $\mathcal{Q}$,  and a standard fixed point $\vec{y}^\ast$ in the interior of $\mathcal{Q}$, we have $\mathrm{T}_\gamma(\vec{y}) - \mathrm{T}_\gamma(\vec{y}^\ast) = \vecc{T}_{c,\lambda}(\vec{y} - \vec{y}^\ast)$, where the matrix $\vecc{T}_{c,\lambda}$ is given by
\begin{equation*}
\vecc{T}_{c,\lambda} = \lambda \Big(c(\vecc{I}_N - \vecc{A}^+\vecc{A})(\vecc{I}_N - \vecc{B}^+\vecc{B}) + c\vecc{A}^+\vecc{A} \vecc{B}^+\vecc{B} + (1-c)\vecc{B}^+\vecc{B}\Big) + (1-\lambda)\vecc{I}_N.
\end{equation*}
Here, $c = \frac{1}{\gamma\alpha+1}$ is a constant in $(0,1)$.
\end{lemma}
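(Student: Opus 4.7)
The plan is to observe that on the interior of $\mathcal{Q}$ the operator $\mathrm{T}_\gamma$ is \emph{affine}, since both $\mathrm{S}$ and $\mathrm{P}$ are affine there, and then subtract the values at $\vec{y}$ and $\vec{y}^\ast$ so that every constant term cancels. Because the reduction is entirely a linear-algebra computation, I do not anticipate any real obstacle beyond bookkeeping; the only mildly delicate point will be putting the resulting linear map into the particular symmetric-looking form stated in the lemma.

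First, I would rewrite $\mathrm{T}_\gamma$ using $c=\frac{1}{\gamma\alpha+1}$ and $1-c=\frac{\gamma\alpha}{\gamma\alpha+1}$ so that
$$\mathrm{T}_\gamma \;=\; \lambda c\,\mathrm{P}\circ(2\mathrm{S}-\mathrm{I}) \;+\; (\mathrm{I}-\lambda\mathrm{S}) \;+\; \lambda(1-c)\vec{u}.$$
Next, using that $\mathrm{P}(\vec{x})=\vecc{A}^+ b+(\vecc{I}_N-\vecc{A}^+\vecc{A})\vec{x}$ and that \eqref{eq:matrix_expression_S} gives $\mathrm{S}(\vec{y})=(\vecc{I}_N-\vecc{B}^+\vecc{B})\vec{y}+\vec{s}^\ast$ with the constant $\vec{s}^\ast=\sum_{j}x_j^\ast\vec{e}_j$ for any $\vec{y}$ in the interior of $\mathcal{Q}$, I would substitute these expressions into $\mathrm{T}_\gamma$. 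Since both $\vec{y}$ and $\vec{y}^\ast$ lie in the interior of $\mathcal{Q}$, the same formulas apply at both points, and hence upon forming the difference $\mathrm{T}_\gamma(\vec{y})-\mathrm{T}_\gamma(\vec{y}^\ast)$ the terms $\vecc{A}^+ b$, $\vec{s}^\ast$, and $\lambda(1-c)\vec{u}$ all drop out.

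What remains is the purely linear map
$$\vecc{T}_{c,\lambda}\;=\;\lambda c\,(\vecc{I}_N-\vecc{A}^+\vecc{A})(\vecc{I}_N-2\vecc{B}^+\vecc{B}) \;+\; \vecc{I}_N \;-\; \lambda(\vecc{I}_N-\vecc{B}^+\vecc{B}),$$
obtained by tracking the $\mathrm{P}\circ(2\mathrm{S}-\mathrm{I})$ and $(\mathrm{I}-\lambda\mathrm{S})$ pieces separately. The final step is to massage this into the form stated in the lemma. I would split
$$(\vecc{I}_N-2\vecc{B}^+\vecc{B})=(\vecc{I}_N-\vecc{B}^+\vecc{B})-\vecc{B}^+\vecc{B},$$
distribute through $(\vecc{I}_N-\vecc{A}^+\vecc{A})$, and collect the $\vecc{B}^+\vecc{B}$ contribution with the $-\lambda(\vecc{I}_N-\vecc{B}^+\vecc{B})$ term, which yields the three pieces $c(\vecc{I}_N-\vecc{A}^+\vecc{A})(\vecc{I}_N-\vecc{B}^+\vecc{B})$, $c\,\vecc{A}^+\vecc{A}\,\vecc{B}^+\vecc{B}$, and $(1-c)\vecc{B}^+\vecc{B}$ inside a single factor of $\lambda$, together with the remaining $(1-\lambda)\vecc{I}_N$. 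This matches the stated expression for $\vecc{T}_{c,\lambda}$ and completes the proof.
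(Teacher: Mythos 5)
Your proposal is correct and follows essentially the same route as the paper: both use the affine expression \eqref{eq:matrix_expression_S} for $\mathrm{S}$ on the interior of $\mathcal{Q}$ (so that $\mathrm{S}(\vec{y})-\mathrm{S}(\vec{y}^\ast)=(\vecc{I}_N-\vecc{B}^+\vecc{B})(\vec{y}-\vec{y}^\ast)$ and all constant terms, including $\vecc{A}^+b$ and $\lambda(1-c)\vec{u}$, cancel in the difference), arrive at the same intermediate matrix $\lambda c(\vecc{I}_N-\vecc{A}^+\vecc{A})(\vecc{I}_N-2\vecc{B}^+\vecc{B})+\vecc{I}_N-\lambda(\vecc{I}_N-\vecc{B}^+\vecc{B})$, and regroup it via the same splitting of $\vecc{I}_N-2\vecc{B}^+\vecc{B}$ into the stated form. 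No gaps.
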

\begin{proof}
By \eqref{eq:matrix_expression_S},   $\mathrm{S}(\vec{y}) - \mathrm{S}(\vec{y}^\ast) = (\vecc{I}_N - \vecc{B}^+\vecc{B})(\vec{y} - \vec{y}^\ast)$.
So by \eqref{eq:operator_T_gamma}, 
\begin{align*}
\mathrm{T}_\gamma(\vec{y}) \,-&\, \mathrm{T}_\gamma(\vec{y}^\ast) 
\!=\! \frac{\lambda}{\gamma\alpha+1}\Big(\mathrm{P}(2\mathrm{S}(\vec{y}) - \vec{y}) - \mathrm{P}(2\mathrm{S}(\vec{y}^\ast) - \vec{y}^\ast)\Big)
+ (\vec{y} - \vec{y}^\ast) - \lambda(\mathrm{S}(\vec{y}) - \mathrm{S}(\vec{y}^\ast))\\
=&\, \frac{\lambda}{\gamma\alpha+1}(\vecc{I}_N - \vecc{A}^+\vecc{A})(\vecc{I}_N - 2\vecc{B}^+\vecc{B})(\vec{y} - \vec{y}^\ast)
+ (\vec{y} - \vec{y}^\ast)
- \lambda(\vecc{I}_N - \vecc{B}^+\vecc{B})(\vec{y} - \vec{y}^\ast)\\
=&\, \frac{\lambda}{\gamma\alpha+1}(\vecc{I}_N - \vecc{A}^+\vecc{A})(\vecc{I}_N - \vecc{B}^+\vecc{B})(\vec{y} - \vec{y}^\ast)
+ \frac{\lambda}{\gamma\alpha+1} \vecc{A}^+\vecc{A} \vecc{B}^+\vecc{B}(\vec{y} - \vec{y}^\ast)\\
+&\, \frac{\lambda\gamma\alpha}{\gamma\alpha+1} \vecc{B}^+\vecc{B}(\vec{y} - \vec{y}^\ast)
+ (1-\lambda)(\vec{y} - \vec{y}^\ast).
\end{align*}
Therefore, the matrix $\vecc{T}_{c,\lambda}$ can be expressed as follows:
\begin{equation*}
\vecc{T}_{c,\lambda} = \frac{\lambda}{\gamma\alpha+1}\Big((\vecc{I}_N - \vecc{A}^+\vecc{A})(\vecc{I}_N - \vecc{B}^+\vecc{B}) + \vecc{A}^+\vecc{A} \vecc{B}^+\vecc{B}\Big) + \frac{\lambda\gamma\alpha}{\gamma\alpha+1} \vecc{B}^+\vecc{B} + (1-\lambda)\vecc{I}_N.
\end{equation*} 
\end{proof}
\begin{definition}
Let $\mathcal{U}$ and $\mathcal{V}$ be two subspaces of $\IR^N$ with $\mathrm{dim}(\mathcal{U}) = p \leq \mathrm{dim}(\mathcal{V})$. The principal angles $\theta_k\in[0,\frac{\pi}{2}]$ ($k=1,\cdots,p$) between $\mathcal{U}$ and $\mathcal{V}$ are recursively defined by 
\begin{align*}
&\cos{\theta_k} = \vec{u}_k^\mathrm{T}\vec{v}_k = \max_{\vec{u}\in\mathcal{U}}\max_{\vec{v}\in\mathcal{V}}\vec{u}^\mathrm{T}\vec{v},\\
\text{such~that}~~&\norm{\vec{u}}{2} = \norm{\vec{v}}{2} = 1,~ \vec{u}_j^\mathrm{T}\vec{u} = 0,~ \vec{v}_j^\mathrm{T}\vec{v} = 0,~ j = 1,2,\cdots,k-1. 
\end{align*}
The vectors $(\vec{u}_1,\cdots,\vec{u}_p)$ and $(\vec{v}_1,\cdots,\vec{v}_p)$ are principal vectors.  
\end{definition}
Our next goal is to decompose the matrix $\vecc{T}_{c,\lambda}$ with principal angles between subspaces $\mathcal{N}(\vecc{A})$ and $\mathcal{N}(\vecc{B})$. 
To simplify the writeup, we define matrix $\vecc{T} = (\vecc{I}_N - \vecc{A}^+\vecc{A})(\vecc{I}_N - \vecc{B}^+\vecc{B}) + \vecc{A}^+\vecc{A} \vecc{B}^+\vecc{B}$. Thus, we rewrite $\vecc{T}_{c,\lambda} = \lambda(c\vecc{T} + (1-c)\vecc{B}^+\vecc{B}) + (1-\lambda)\vecc{I}_N$.
Let $\vecc{A}_0$ be an $N\times(N-1)$ matrix whose columns are orthogonal basis of $\mathcal{N}(\vecc{A})$ and $\vecc{A}_1$ be an $N\times 1$ matrix whose columns are orthogonal basis of $\mathcal{R}(\transpose{\vecc{A}})$.
Similarly, let $\vecc{B}_0$ be an $N\times(N-r)$ matrix whose columns are orthogonal basis of $\mathcal{N}(\vecc{B})$ and $\vecc{B}_1$ be an $N\times r$ matrix whose columns are orthogonal basis of $\mathcal{R}(\transpose{\vecc{B}})$.

Since both $\vecc{A}^+\vecc{A}$ and $\vecc{A}_1\transpose{\vecc{A}}_1$ represent the projection
to $\mathcal{R}(\transpose{\vecc{A}})$, we have $\vecc{A}^+\vecc{A} = \vecc{A}_1\transpose{\vecc{A}}_1$. Similarly, $\vecc{I}_N - \vecc{A}^+\vecc{A} = \vecc{A}_0\transpose{\vecc{A}}_0$. Thus we have $\vecc{T} = \vecc{A}_0\transpose{\vecc{A}}_0\vecc{B}_0\transpose{\vecc{B}}_0 + \vecc{A}_1\transpose{\vecc{A}}_1\vecc{B}_1\transpose{\vecc{B}}_1$.

Define matrix $\vecc{E}_0=\transpose{\vecc{A}}_0\vecc{B}_0$ and matrix $\vecc{E}_1=\transpose{\vecc{A}}_1\vecc{B}_0$. Since $\vecc{A}_0\transpose{\vecc{A}}_0 + \vecc{A}_1\transpose{\vecc{A}}_1 = \vecc{I}_N$, we have $\vecc{B}_0 = (\vecc{A}_0\transpose{\vecc{A}}_0 + \vecc{A}_1\transpose{\vecc{A}}_1)\vecc{B}_0 = \vecc{A}_0\vecc{E}_0 + \vecc{A}_1\vecc{E}_1$. Therefore, we rewrite
\begin{align}\label{eq:decomp_B0B0T_1}
\vecc{B}_0\transpose{\vecc{B}}_0 = (\vecc{A}_0\vecc{E}_0 + \vecc{A}_1\vecc{E}_1)(\transpose{\vecc{E}}_0\transpose{\vecc{A}}_0 + \transpose{\vecc{E}}_1\transpose{\vecc{A}}_1) 
= 
\begin{bmatrix}
\vecc{A}_0 & \vecc{A}_1
\end{bmatrix}
\begin{bmatrix}
\vecc{E}_0\transpose{\vecc{E}}_0 & \vecc{E}_0\transpose{\vecc{E}}_1 \\ 
\vecc{E}_1\transpose{\vecc{E}}_0 & \vecc{E}_1\transpose{\vecc{E}}_1
\end{bmatrix}
\begin{bmatrix}
\transpose{\vecc{A}}_0\\ 
\transpose{\vecc{A}}_1
\end{bmatrix}.
\end{align}
The singular value decomposition (SVD) of the $(N-1)\times(N-r)$ matrix $\vecc{E}_0$ is $\vecc{E}_0 = \vecc{U}_0\cos{\vecc{\Theta}}\transpose{\vecc{V}}$ with singular values $\cos{\theta_1}$, $\cdots$, $\cos{\theta_{N-r}}$ in nonincreasing order.
We know that   $\theta_i$ ($i=1,\cdots,N-r$) are the principal angles between   $\mathcal{N}(\vecc{A})$ and $\mathcal{N}(\vecc{B})$. 
\par
Notice $\transpose{\vecc{E}}_1\vecc{E}_1 = \transpose{\vecc{B}}_0\vecc{A}_1\transpose{\vecc{A}}_1\vecc{B}_0$ and $\vecc{A}_1\transpose{\vecc{A}}_1 = \vecc{I}_N - \vecc{A}_0\transpose{\vecc{A}}_0$, we have $\transpose{\vecc{E}}_1\vecc{E}_1 = \transpose{\vecc{B}}_0\vecc{B}_0 - \transpose{\vecc{B}}_0\vecc{A}_0\transpose{\vecc{A}}_0\vecc{B}_0 = \vecc{I}_{N-r}-\transpose{\vecc{E}}_0\vecc{E}_0$. Recall the SVD of $\vecc{E}_0$, we have $\transpose{\vecc{E}}_1\vecc{E}_1 = \vecc{V}\sin^2{\vecc{\Theta}}\transpose{\vecc{V}}$.  
Thus $\vecc{E}_1$ can be expressed as $\vecc{U}_1\sin{\vecc{\Theta}}\transpose{\vecc{V}}$, which is however not the SVD of $\vecc{E}_1$.
To this end, let matrix $\widetilde{\vecc{A}}=[\vecc{A}_0\vecc{U}_0 ~ \vecc{A}_1\vecc{U}_1]$, then \eqref{eq:decomp_B0B0T_1} becomes
\begin{equation}\label{eq:decomp_B0B0T}
\vecc{B}_0\transpose{\vecc{B}}_0 = 
\widetilde{\vecc{A}}
\begin{bmatrix}
\cos^2{\vecc{\Theta}} & \sin{\vecc{\Theta}}\cos{\vecc{\Theta}} \\ 
\sin{\vecc{\Theta}}\cos{\vecc{\Theta}} & \sin^2{\vecc{\Theta}}
\end{bmatrix}
\transpose{\widetilde{\vecc{A}}}.
\end{equation}
Because of $\vecc{B}_1\transpose{\vecc{B}}_1 = \vecc{I}_N-\vecc{B}_0\transpose{\vecc{B}}_0$ and $\widetilde{\vecc{A}}\transpose{\widetilde{\vecc{A}}}=\vecc{I}_N$, we have the decomposition 
\begin{equation}\label{eq:decomp_B1B1T}
\vecc{B}_1\transpose{\vecc{B}}_1 = 
\widetilde{\vecc{A}}
\begin{bmatrix}
\sin^2{\vecc{\Theta}} & -\sin{\vecc{\Theta}}\cos{\vecc{\Theta}} \\ 
-\sin{\vecc{\Theta}}\cos{\vecc{\Theta}} & \cos^2{\vecc{\Theta}}
\end{bmatrix}
\transpose{\widetilde{\vecc{A}}}.
\end{equation}
Notice $\vecc{A}_0\transpose{\vecc{A}}_0\widetilde{\vecc{A}}=[\vecc{A}_0\vecc{U}_0~\vecc{O}_{N\times(N-r)}]$ and $\vecc{A}_1\transpose{\vecc{A}}_1\widetilde{\vecc{A}}=[\vecc{O}_{N\times(N-r)}~\vecc{A}_1\vecc{U}_1]$, by \eqref{eq:decomp_B0B0T} and \eqref{eq:decomp_B1B1T}, we obtain
\begin{equation}\label{eq:matrixT}
\vecc{T} = 
\widetilde{\vecc{A}}
\begin{bmatrix}
\cos^2{\vecc{\Theta}} & \sin{\vecc{\Theta}}\cos{\vecc{\Theta}} \\ 
-\sin{\vecc{\Theta}}\cos{\vecc{\Theta}} & \cos^2{\vecc{\Theta}}
\end{bmatrix}
\transpose{\widetilde{\vecc{A}}}.
\end{equation}
Therefore, use \eqref{eq:matrixT}   and consider $\vecc{B}^+\vecc{B} = \vecc{B}_1\transpose{\vecc{B}}_1$, the matrix $\vecc{T}_{c,\lambda}$ becomes
\begin{equation}\label{eq:mat_Tclamb}
\vecc{T}_{c,\lambda} = 
\widetilde{\vecc{A}}
\setlength{\arraycolsep}{2.25pt}\begin{bmatrix}
\lambda c\cos^2{\vecc{\Theta}} + \lambda (1-c)\sin^2{\vecc{\Theta}} + (1-\lambda)\vecc{I}_{N-r} & \lambda(2c-1)\sin{\vecc{\Theta}}\cos{\vecc{\Theta}} \\ 
-\lambda\sin{\vecc{\Theta}}\cos{\vecc{\Theta}} & \lambda \cos^2{\vecc{\Theta} + (1-\lambda)\vecc{I}_{N-r}}
\end{bmatrix}
\transpose{\widetilde{\vecc{A}}}.
\end{equation}

\subsection{Asymptotic convergence rate}
With the assumption $r<N$, there exists a nonzero principal angle between subspaces $\mathcal{N}(\vecc{A})$ and $\mathcal{N}(\vecc{B})$. The following lemma gives values of all the principal angles. 
\begin{lemma}\label{lem:principle_angle_value}
The principal angles $\theta_i$, $i = 1,\cdots,N-r$, between subspaces $\mathcal{N}(\vecc{A})$ and $\mathcal{N}(\vecc{B})$ satisfy
\begin{equation}\label{eq:principle_angle_value}
\cos{\theta_1} = \cdots = \cos{\theta_{N-r-1}} = 1 
\quad\text{and}\quad
\cos{\theta_{N-r}} = \sqrt{\frac{r}{N}}. 
\end{equation}
\end{lemma}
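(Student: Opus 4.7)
The plan is to reduce the computation to two ingredients: (i) the dimension of $\mathcal{W}:=\mathcal{N}(\vecc{A})\cap\mathcal{N}(\vecc{B})$, which controls how many principal angles vanish, and (ii) a single explicit projection that produces the one remaining angle.

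First I would invoke the standard fact about principal angles: the number of indices $k$ with $\cos\theta_k=1$ equals $\dim\mathcal{W}$. The $\leq$ direction follows because $\transpose{\vec{u}_k}\vec{v}_k=1$ for unit vectors forces $\vec{u}_k=\vec{v}_k\in\mathcal{W}$, and the $\vec{u}_k$ are orthonormal; the $\geq$ direction follows because any orthonormal basis of $\mathcal{W}$, taken as $\vec{u}_k=\vec{v}_k$, realizes the recursive maxima in the definition. Since $\mathcal{N}(\vecc{B})$ consists of vectors supported on $I^c:=\{1,\dots,N\}\setminus\{i_1,\dots,i_r\}$ and $\mathcal{N}(\vecc{A})$ consists of vectors of zero sum, $\mathcal{W}$ is the space of zero-sum vectors supported on $I^c$, whose dimension is $N-r-1$. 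This yields $\cos\theta_1=\cdots=\cos\theta_{N-r-1}=1$, leaving only $\theta_{N-r}$ to compute.

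For the final angle, taking the first $N-r-1$ principal vector pairs as an orthonormal basis of $\mathcal{W}$, the recursion forces $\vec{u}_{N-r}$ to be a unit vector in the one-dimensional orthogonal complement of $\mathcal{W}$ inside $\mathcal{N}(\vecc{B})$, which is spanned by $\vec{w}=\frac{1}{\sqrt{N-r}}\sum_{i\in I^c}\vec{e}_i$; thus $\vec{u}_{N-r}=\pm\vec{w}$. Because $\vec{w}\perp\mathcal{W}$ in the ambient space and $\mathcal{W}\subset\mathcal{N}(\vecc{A})$, the projection $P_{\mathcal{N}(\vecc{A})}\vec{w}$ is still orthogonal to $\mathcal{W}$, so the maximizing $\vec{v}_{N-r}\in\mathcal{N}(\vecc{A})$ is simply the normalized projection and
\begin{equation*}
\cos\theta_{N-r}=\transpose{\vec{w}}\vec{v}_{N-r}=\norm{P_{\mathcal{N}(\vecc{A})}\vec{w}}{2}.
\end{equation*}
Using $P_{\mathcal{N}(\vecc{A})}\vec{w}=\vec{w}-\frac{\transpose{\vec{1}}\vec{w}}{N}\vec{1}=\vec{w}-\frac{\sqrt{N-r}}{N}\vec{1}$, a direct calculation gives $\norm{P_{\mathcal{N}(\vecc{A})}\vec{w}}{2}^2=1-\frac{N-r}{N}=\frac{r}{N}$, hence $\cos\theta_{N-r}=\sqrt{r/N}$.

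The main obstacle is the bookkeeping for the principal-angle framework — in particular the fact that the multiplicity of unit cosines equals $\dim\mathcal{W}$ — but the two-line argument above (or equivalently reading it off the SVD $\vecc{E}_0=\vecc{U}_0\cos\vecc{\Theta}\transpose{\vecc{V}}$ introduced in the preceding paragraph, where unit singular values of $\transpose{\vecc{A}}_0\vecc{B}_0$ correspond exactly to an orthonormal basis of $\mathcal{W}$) handles it directly. Everything else is an elementary intersection-and-projection computation.
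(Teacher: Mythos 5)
Your proof is correct, and it takes a genuinely different route from the paper's. You identify the intersection $\mathcal{W}=\mathcal{N}(\vecc{A})\cap\mathcal{N}(\vecc{B})$ explicitly as the zero-sum vectors supported off $\{i_1,\dots,i_r\}$, which pins down the $N-r-1$ unit cosines, and then compute the single remaining angle from first principles as $\norm{P_{\mathcal{N}(\vecc{A})}\vec{w}}{2}$ for the normalized indicator $\vec{w}$ of the complementary support. The paper instead works with the one-dimensional subspace $\mathcal{N}(\vecc{A})^\perp=\mathrm{span}\{\vec{1}\}$: it computes the singular values of $\frac{1}{\sqrt{N}}\transpose{\vec{1}}\vecc{B}_0$ (whose Gram matrix is $\frac{1}{N}$ times the all-ones matrix, giving the single nonzero singular value $\sqrt{(N-r)/N}$) and then invokes the complementarity theorem for principal angles (Theorem~2.7 of \cite{knyazev2007majorization}) to convert this into $\cos\theta_{N-r}=\sqrt{r/N}$, obtaining the unit cosines from the dimension count $\dim\mathcal{N}(\vecc{A})+\dim\mathcal{N}(\vecc{B})-N=N-r-1$. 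Your argument buys self-containedness: no external complementarity theorem is needed, and the multiplicity of unit cosines is justified via the intersection rather than asserted from dimensions; the price is the extra bookkeeping to verify that the recursive maxima are attained where you claim, which you handle correctly by noting that $P_{\mathcal{N}(\vecc{A})}\vec{w}\perp\mathcal{W}$ makes the unconstrained maximizer admissible for the constrained step. Both computations are elementary and the final numbers agree.
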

\begin{proof}
Let $\mathcal{N}(\vecc{A})^\perp$ denote the orthogonal complement of space $\mathcal{N}(\vecc{A})$.
Since $\vecc{A} = \begin{bmatrix}
    1 & 1 & \cdots & 1
\end{bmatrix}\in\IR^{1\times N}$, we have $\mathcal{N}(\vecc{A})^\perp = \mathrm{span}\{\vec{1}\}$.
Recall the columns of $\vecc{B}_0$ are the orthogonal basis of $\mathcal{N}(\vecc{B})$. 
The principal angles between $\mathcal{N}(\vecc{A})^\perp$ and $\mathcal{N}(\vecc{B})$ can be computed via the SVD of $\frac{1}{\sqrt{N}}\transpose{\vec{1}}\vecc{B}_0$.
Each column of $\vecc{B}_0$ is a standard basis $\vec{e}_j$, where $j\neq i_1,\cdots,i_r$. Thus 
\begin{equation*}
\transpose{\Big(\frac{1}{\sqrt{N}}\transpose{\vec{1}}\vecc{B}_0\Big)}\Big(\frac{1}{\sqrt{N}}\transpose{\vec{1}}\vecc{B}_0\Big) 
= \frac{1}{N}
\begin{bmatrix}
1 & 1 & \cdots & 1\\ 
1 & 1 & \cdots & 1\\ 
\vdots & \vdots &  & \vdots\\ 
1 & 1 & \cdots & 1
\end{bmatrix}_{(N-r)\times(N-r)}.
\end{equation*}
The eigenvalues of the $(N-r)\times (N-r)$ matrix consisting of all ones, are $N-r$ and $0, \cdots, 0$. So the singular values of $\frac{1}{\sqrt{N}}\transpose{\vec{1}}\vecc{B}_0$ are $\sqrt{\frac{N-r}{N}}$ and $0, \cdots, 0$. 
We conclude $\cos{\theta_{N-r}} = \sqrt{\frac{r}{N}}$, since the non-trivial principal angles between $\mathcal{N}(\vecc{A})$ and $\mathcal{N}(\vecc{B})$ and the corresponding non-trivial principal angles between $\mathcal{N}(\vecc{A})^\perp$ and $\mathcal{N}(\vecc{B})$ sum up to $\frac{\pi}{2}$, see the Theorem~2.7 in \cite{knyazev2007majorization}.
In addition, since the dimension of $\mathcal{N}(\vecc{A})$ is $N-1$ and the dimension of $\mathcal{N}(\vecc{B})$ is $N-r$, then as long as $N-r>1$, from the definition of principal angles, it is straightforward to see $\cos{\theta_1} = \cdots = \cos{\theta_{N-r-1}} = 1$.
\end{proof}
By Lemma~\ref{lem:principle_angle_value},  there exists only one nonzero principal angle $\theta_{N-r}$. By eliminating zero columns in \eqref{eq:mat_Tclamb}, \eqref{eq:mat_Tclamb} can be simplified as 
{\footnotesize
\begin{align*}
 \resizebox{.99\textwidth}{!}{$
\vecc{T}_{c,\lambda} =
[\vecc{A}_0\vecc{U}_0 ~ \vecc{A}_1]
\setlength{\arraycolsep}{3.0pt}\begin{bmatrix}
\vecc{0}_{r-1} & & & \\
& (1-\lambda+\lambda c)\vecc{I}_{N-r-1} & & \\
& & \lambda c\cos^2{\theta_{N-r}} + \lambda (1-c)\sin^2{\theta_{N-r}} + (1-\lambda) & \lambda(2c-1)\sin{\theta_{N-r}}\cos{\theta_{N-r}} \\ 
& & -\lambda\sin{\theta_{N-r}}\cos{\theta_{N-r}} & \lambda \cos^2{\theta_{N-r} + (1-\lambda)}
\end{bmatrix}
\begin{bmatrix}
\transpose{\vecc{U}}_0\transpose{\vecc{A}}_0\\
\transpose{\vecc{A}}_1
\end{bmatrix}.$}
\end{align*}
}
From \eqref{eq:mat_Tclamb} we know the matrix $\vecc{T}_{c,\lambda}$ is a nonnormal matrix, thus $\norm{\vecc{T}_{c,\lambda}^k}{2}$ is significantly smaller than $\norm{\vecc{T}_{c,\lambda}}{2}^k$ for sufficiently large $k$.
Therefore, the asymptotic convergence rate is governed by $\lim_{k\rightarrow\infty} \norm{\vecc{T}_{c,\lambda}^k}{2}^{\frac{1}{k}}$, which is equal to the norm of the eigenvalue of $\vecc{T}_{c,\lambda}$ with the largest magnitude.
We have
\begin{multline*}
\det(\vecc{T}_{c,\lambda}-\rho\vecc{I}) = (\rho - 1+\lambda-\lambda c)^{N-r-1} (\rho - 1+\lambda c)^{r-1}\\
\times \left[\rho^2 - (\lambda (c \cos{2\theta_{N-r}} - 1) +2)\rho + \lambda^2 c \sin^2{\theta_{N-r}} + \lambda(c\cos{2\theta_{N-r}} - 1) + 1\right].
\end{multline*}
By Lemma~\ref{lem:principle_angle_value}, the matrix $\vecc{T}_{c,\lambda}$ has eigenvalues $\rho_0 = 1-\lambda c$ and $\rho_1 = 1 - \lambda(1-c)$ corresponding to the principle angles $\theta_1, \cdots, \theta_{N-r-1}$, Corresponding to the principle angle $\theta_{N-r}$, the matrix $\vecc{T}_{c,\lambda}$ has another two eigenvalues, $\rho_2$ and $\rho_3$, satisfying the following quadratic equation:
\begin{equation}\label{eq:quadratic_eq}
\rho^2 - (\lambda (c \cos{2\theta_{N-r}} - 1) +2)\rho + \lambda^2 c \sin^2{\theta_{N-r}} + \lambda(c\cos{2\theta_{N-r}} - 1) + 1 = 0.
\end{equation}
The discriminant of above equation is $\Delta = \lambda^2(c^2\cos^2{2\theta_{N-r}} - 2c + 1)$.
The two solutions of $\Delta = 0$ are $[1 \pm \sin(2\theta_{N-r})]/\cos^{2}(2\theta_{N-r})$. Notice that $[1 + \sin(2\theta)]/\cos^{2}(2 \theta) \geq 1$ for any $\theta \in [0, \frac{\pi}{2}]$ and $c \in(0,1)$. Let $c^{\ast} = [1-\sin(2 \theta_{N-r})]/\cos^{2}(2\theta_{N-r})$, then the magnitudes of $\rho_2$ and $\rho_3$ are:
\begin{align*}
\text{if}~ c \leq c^{\ast}, \quad\text{then}~
&\abs{\rho_2} = \frac{1}{2}\abs{\lambda c \cos(2\theta_{N-r})-\lambda+2 + \lambda\sqrt{\cos^{2}(2 \theta_{N-r}) c^{2}-2 c+1}\,},\\
&\abs{\rho_3} = \frac{1}{2}\abs{\lambda c \cos(2\theta_{N-r})-\lambda+2 - \lambda\sqrt{\cos^{2}(2 \theta_{N-r}) c^{2}-2 c+1}\,},\\
\text{if}~ c > c^{\ast}, \quad\text{then}~
&\abs{\rho_2} = \abs{\rho_3} = \sqrt{c\lambda^{2}\sin^{2} \theta_{N-r} - (1-c\cos(2 \theta_{N-r})) \lambda+1}\,.
\end{align*}
Recall  the generalized Douglas--Rachford splitting \eqref{gDR} and \eqref{gDR2} converges due to convexity \cite{lions1979splitting}. When the iterations enter  the asymptotic regime (after the cut-off location of the operator $\mathrm{S}$ does not change), the convergence  rate is governed by the largest magnitude of eigenvalues $\rho_0$, $\rho_1$, $\rho_2$, and $\rho_3$:
\begin{theorem}
\label{thm-rate}
For a standard fixed point of generalized Douglas--Rachford splitting iteration as defined in Section~\ref{sec:structure_fix_p}, the asymptotic convergence rate of  \eqref{gDR} solving \eqref{formulation-opt4} is linear. There exists a sufficiently large $K>0$, such that for any integer $k\geq K$, we have 
\begin{equation*}
\norm{\vec{y}^k - \vec{y}^\ast}{2} \leq \widetilde{C} \Big(\min_{c,\lambda}\max\{\abs{\rho_0}, \abs{\rho_1},\abs{\rho_2},\abs{\rho_3}\}\Big)^k,
\end{equation*}
where $K$ and $\widetilde{C}$ may depend on $\vecc{A}$, $b$, and $\vec{y}^0$.
\end{theorem}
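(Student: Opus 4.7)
The plan is to combine the two main ingredients built up in the preceding subsections: (i) after finitely many iterations the nonlinear map $\mathrm{T}_\gamma$ becomes affine because the cut-off location of $\mathrm{S}$ stops changing, and (ii) the resulting linear iteration matrix $\vecc{T}_{c,\lambda}$ has the explicit block form in \eqref{eq:mat_Tclamb}, whose spectrum is already computed.

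First I would establish entry into the asymptotic regime. Since $f$ is strongly convex and $g$ is convex, the classical Douglas--Rachford convergence result of Lions--Mercier guarantees $\vec{y}^k\to \vec{y}^\ast$ for some fixed point $\vec{y}^\ast\in\Pi$. Because we are in the standard case, $\vec{y}^\ast$ lies in the interior of $\mathcal{Q}$, so there exists $\varepsilon>0$ with $\mathcal{B}_\varepsilon(\vec{y}^\ast)\subset\mathcal{Q}$. As shown in Section~\ref{sec:structure_fix_p}, nonexpansiveness of $\mathrm{T}_\gamma$ implies that once some iterate $\vec{y}^K$ enters $\mathcal{B}_\varepsilon(\vec{y}^\ast)$, all subsequent iterates stay inside, hence the piecewise definition of $\mathrm{S}$ in \eqref{eq:matrix_expression_S} is frozen for $k\geq K$.

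Next I would promote the per-step identity of the preceding lemma, $\mathrm{T}_\gamma(\vec{y})-\mathrm{T}_\gamma(\vec{y}^\ast)=\vecc{T}_{c,\lambda}(\vec{y}-\vec{y}^\ast)$, into an iterated bound. Since $\vec{y}^\ast$ is a fixed point, for every $k\geq K$,
\begin{equation*}
\vec{y}^{k+1}-\vec{y}^\ast=\mathrm{T}_\gamma(\vec{y}^k)-\mathrm{T}_\gamma(\vec{y}^\ast)=\vecc{T}_{c,\lambda}(\vec{y}^k-\vec{y}^\ast),
\end{equation*}
so by induction $\vec{y}^k-\vec{y}^\ast = \vecc{T}_{c,\lambda}^{\,k-K}(\vec{y}^K-\vec{y}^\ast)$. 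Hence $\|\vec{y}^k-\vec{y}^\ast\|_2\leq \|\vecc{T}_{c,\lambda}^{\,k-K}\|_2\,\|\vec{y}^K-\vec{y}^\ast\|_2$.

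The remaining task is to control $\|\vecc{T}_{c,\lambda}^{\,k}\|_2$ by the spectral radius $\rho(\vecc{T}_{c,\lambda})=\max\{|\rho_0|,|\rho_1|,|\rho_2|,|\rho_3|\}$, whose eigenvalues were identified via Lemma~\ref{lem:principle_angle_value} and the quadratic equation \eqref{eq:quadratic_eq}. The matrix $\widetilde{\vecc{A}}$ in \eqref{eq:mat_Tclamb} is orthogonal, so $\|\vecc{T}_{c,\lambda}^k\|_2$ equals the $2$-norm of the inner block matrix raised to the $k$-th power; that inner block is block-diagonal with two scaled-identity blocks ($\rho_0$, $\rho_1$) and one $2\times 2$ block whose eigenvalues are $\rho_2,\rho_3$. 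Except on a measure-zero set of $(c,\lambda)$ where the $2\times 2$ block is defective, this block is diagonalizable and one gets $\|\vecc{T}_{c,\lambda}^k\|_2\leq C\,\rho(\vecc{T}_{c,\lambda})^k$ with $C$ the condition number of the diagonalizing similarity. Finally, minimizing the resulting rate over the free parameters $(c,\lambda)$ yields the stated bound, with $\widetilde{C}$ absorbing $\|\vec{y}^K-\vec{y}^\ast\|_2$, the similarity-transform condition number, and the finite shift $K$.

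The main obstacle is the nonnormality of $\vecc{T}_{c,\lambda}$ already flagged in the text: a defective Jordan block in the $\rho_2,\rho_3$ subblock (precisely when $\Delta=0$, i.e.\ $c=c^\ast$) would give $\|\vecc{T}_{c,\lambda}^k\|_2\sim k\,\rho^k$ rather than $\rho^k$. I would handle this either by excluding the exceptional locus (so diagonalizability holds) or by invoking Gelfand's formula $\lim_{k\to\infty}\|\vecc{T}_{c,\lambda}^k\|_2^{1/k}=\rho(\vecc{T}_{c,\lambda})$ and absorbing the polynomial prefactor into $\widetilde{C}$ at the cost of an arbitrarily small slackening of the base, which for the purpose of an asymptotic rate is harmless and consistent with the statement.
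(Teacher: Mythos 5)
Your proposal is correct and follows essentially the same route as the paper, which proves the theorem implicitly through the preceding development: entry into the asymptotic regime via nonexpansiveness in the standard case, the linearization $\mathrm{T}_\gamma(\vec{y})-\mathrm{T}_\gamma(\vec{y}^\ast)=\vecc{T}_{c,\lambda}(\vec{y}-\vec{y}^\ast)$, and control of $\norm{\vecc{T}_{c,\lambda}^k}{2}$ by the spectral radius via the block decomposition \eqref{eq:mat_Tclamb}. Your explicit handling of the possibly defective $2\times 2$ block at $c=c^\ast$ (where $\Delta=0$, which is precisely the recommended parameter choice for $\theta_{N-r}\leq\frac{\pi}{4}$) is a genuine refinement of a point the paper passes over silently, and your resolution via Gelfand's formula with an arbitrarily small slackening of the base is the standard and appropriate fix.
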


\subsection{A simple strategy of choosing nearly optimal parameters}\label{sec:param_selection}
For solving problem \eqref{formulation-opt4}, after the iteration of algorithm \eqref{gDR} enters   the asymptotic linear convergence regime, the rate of convergence is governed by the largest magnitude of $\rho_0$, $\rho_1$, $\rho_2$, and $\rho_3$. For seeking optimal parameters, we can safely ignore $\rho_0$ because it is straightforward to verify that $\rho_0\leq \rho_1$ with the optimal parameters derived below.
It is highly preferred to construct a guideline for selecting parameters $c$ and $\lambda$ such that for   $\max\{\abs{\rho_1},\abs{\rho_2},\abs{\rho_3}\}$ is reasonably small.
\par
We first consider the case $\theta_{N-r}\in (\frac{\pi}{4},\frac{\pi}{2}]$. It is easy to check $c^{\ast} \!=\! \frac{1}{(\cos\theta_{N-r} + \sin\theta_{N-r})^2} \in (\frac{1}{2},1]$. Define surfaces 
$\Gamma_i = \{(c,\lambda,z): 0<c<c^\ast,~ 0<\lambda\leq2,\, z=\abs{\rho_i}\}$, where $i \in \{1, 2, 3\}$.
For any point $(c,\lambda,z)\in \Gamma_2\cap\Gamma_3$, due to the fact that $\abs{a+b} = \abs{a-b}$ implies $ab=0$ for any $a,b\in\IR$, we have $(\lambda c\cos(2\theta_{N-r}) - \lambda + 2)\sqrt{\Delta} = 0$.
When $c < c^{\ast}$ the discriminant $\Delta > 0$, we get $\lambda c\cos(2\theta_{N-r}) - \lambda + 2 = 0$.
Thus, if there exists a point belongs to $\Gamma_1\cap \Gamma_2\cap \Gamma_3$, then it satisfies
\begin{align*}
\begin{cases}
\abs{1-\lambda(1-c)} = \frac{\lambda}{2}\sqrt{\cos^{2}(2\theta_{N-r}) c^{2} - 2c + 1}\,,\\
\lambda c\cos(2\theta_{N-r}) - \lambda + 2 = 0.
\end{cases}
\end{align*}
On surfaces $\Gamma_i$, $i\in\{1, 2, 3\}$, the parameters $c\in(0,c^\ast)$ and $\lambda \in (0,2]$ implies above equations only have one solution $c = \frac{1}{2}$ and $\lambda = \frac{4}{2-\cos{(2\theta_{N-r})}}$. Thus, we have 
\begin{equation}\label{eq:three_surfs_cap}
\Gamma_1\cap \Gamma_2\cap \Gamma_3 = \Big\{\Big(\frac{1}{2}, \frac{4}{2-\cos{(2\theta_{N-r})}}, -\frac{\cos{(2\theta_{N-r})}}{2-\cos{(2\theta_{N-r})}}\Big)\Big\}.
\end{equation}
Therefore, we know when $\theta_{N-r}\in(\frac{\pi}{4},\frac{\pi}{2}]$, the minimum of $\max\{\abs{\rho_1},\abs{\rho_2},\abs{\rho_3}\}$ for $c\in(0,c^\ast)$ and $\lambda \in (0,2]$ is not greater than $-\frac{\cos{(2\theta_{N-r})}}{2-\cos{(2\theta_{N-r})}}$.
To deal with $c\in[c^\ast,1)$, we need the following lemma.
\begin{lemma}\label{lem:param_select_ineq}
Assume $\rho_1$ and $\rho_2$ are functions of $c$ and $\lambda$, for which the minimum can be attained. Then, the following inequality holds.
\begin{equation*}
\min_{c,\lambda}\, \max\{\abs{\rho_1}, \abs{\rho_2}\} \geq \max\{\min_{c,\lambda}\abs{\rho_1}, \min_{c,\lambda}\abs{\rho_2}\}.
\end{equation*}
\end{lemma}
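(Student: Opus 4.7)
The statement is an instance of the standard min-max (or ``maximin'') inequality, so the plan is to prove it abstractly with $|\rho_1|$ and $|\rho_2|$ viewed simply as two real-valued functions of the pair $(c,\lambda)$. I would not use any special structure of the eigenvalues derived earlier, because the inequality holds for any pair of functions over a common domain.

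The plan is as follows. First I fix an arbitrary $(c,\lambda)$ in the domain and use the elementary bounds $|\rho_1(c,\lambda)|\leq\max\{|\rho_1(c,\lambda)|,|\rho_2(c,\lambda)|\}$ and $|\rho_2(c,\lambda)|\leq\max\{|\rho_1(c,\lambda)|,|\rho_2(c,\lambda)|\}$. Second, since these hold pointwise, I take the infimum over $(c,\lambda)$ on both sides of each, which preserves the inequality; this yields
\[
\min_{c,\lambda}|\rho_1|\ \leq\ \min_{c,\lambda}\max\{|\rho_1|,|\rho_2|\},\qquad \min_{c,\lambda}|\rho_2|\ \leq\ \min_{c,\lambda}\max\{|\rho_1|,|\rho_2|\}.
\]
Third, taking the maximum of the two left-hand sides gives exactly the claimed inequality.

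There is essentially no obstacle here: the only care needed is to note that the minima on the left are attained by assumption, so the quantities are well-defined (otherwise one should read ``$\min$'' as ``$\inf$,'' which would not change the argument). I would add a one-line remark that equality need not hold in general, to make clear why the lemma is useful: it provides a lower bound on the optimal worst-case rate by decoupling the two eigenvalues, which in the subsequent parameter selection will be combined with explicit formulas for $\min_{c,\lambda}|\rho_1|$ and $\min_{c,\lambda}|\rho_2|$ to rule out the regime $c\in[c^\ast,1)$.
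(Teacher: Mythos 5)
Your proof is correct and rests on the same elementary observation as the paper's: pointwise, each of $|\rho_1|$ and $|\rho_2|$ is dominated by $\max\{|\rho_1|,|\rho_2|\}$, and hence so are the respective infima. The paper packages this as a two-case analysis at the attained minimizer $(c_0,\lambda_0)$ (with a small contradiction argument in the second case), whereas you take the infimum of the pointwise bound directly over all $(c,\lambda)$ --- a slightly cleaner rendering of the same idea, which, as you note, does not even require the minima to be attained.
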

\begin{proof}
Assume the minimum of $\max\{\abs{\rho_1}, \abs{\rho_2}\}$ is achieved at $(c_0, \lambda_0)$. 
We have
\begin{itemize}[topsep=2pt,noitemsep]
\item[\emph{i}.] If $\abs{\rho_1(c_0, \lambda_0)} \geq \abs{\rho_2(c_0, \lambda_0)}$, then 
$ 
\min_{c,\lambda}\, \max\{\abs{\rho_1}, \abs{\rho_2}\} = \abs{\rho_1(c_0, \lambda_0)} \geq \min_{c,\lambda}\abs{\rho_1}.$ 
\item[\emph{ii}.] If $\abs{\rho_1(c_0, \lambda_0)} < \abs{\rho_2(c_0, \lambda_0)}$, then 
$ \min_{c,\lambda}\, \max\{\abs{\rho_1}, \abs{\rho_2}\} = \abs{\rho_2(c_0, \lambda_0)} > \abs{\rho_1(c_0, \lambda_0)}.$
Proof by contradiction: assume $\underset{c,\lambda}{\min} \max\{\abs{\rho_1}, \abs{\rho_2}\} < \underset{c,\lambda}{\min} \abs{\rho_1}$, then it implies $\abs{\rho_1(c_0,\lambda_0)}<\underset{c,\lambda}{\min}\abs{\rho_1}$, which is impossible.
\end{itemize}
Thus, $\underset{c,\lambda}{\min} \max\{\abs{\rho_1}, \abs{\rho_2}\} \geq \underset{c,\lambda}{\min}\abs{\rho_1}$. Similarly,   $\underset{c,\lambda}{\min} \max\{\abs{\rho_1}, \abs{\rho_2}\} \geq \underset{c,\lambda}{\min}\abs{\rho_2}$.
\end{proof}
When $c\in[c^\ast,1)$, the magnitude of $\rho_2$ and $\rho_3$ are equal, namely we only need to find suitable parameters $c$ and $\lambda$ such that the $\max\{\abs{\rho_1},\abs{\rho_2}\}$ is reasonably small. 
It is easy to verify that, when $c\in[c^\ast,1)$ and $\lambda\in(0,2]$, the function $\rho_1$ is monotonically increasing with respect to $c$ and monotonically decreasing with respect to $\lambda$. Thus, $\rho_1(c^\ast,2) = 2c^\ast-1>0$ gives $\abs{\rho_1} = \rho_1$.
Associated with $\lambda$ greater or less than $-\frac{\cos{(2\theta_{N-r})}}{\sin^2\theta_{N-r}}$, we have two cases.
\begin{enumerate}
\item[1.] 
When $\lambda\in(0,-\frac{\cos{(2\theta_{N-r})}}{\sin^2\theta_{N-r}}]$, recall the monotonicity of $\rho_1$, we have
\begin{multline*}
\min_{c\in[c^\ast,1),~ \lambda\in(0,-\frac{\cos{(2\theta_{N-r})}}{\sin^2\theta_{N-r}}]}\abs{\rho_1} 
= \rho_1\Big(c^\ast, -\frac{\cos{(2\theta_{N-r})}}{\sin^2\theta_{N-r}}\Big)\\
= 1 + \frac{\cos{(2\theta_{N-r})}}{\sin^2\theta_{N-r}}\Big(1-\frac{1}{(\cos\theta_{N-r} + \sin\theta_{N-r})^2}\Big)
> \frac{1}{2} > -\frac{\cos{(2\theta_{N-r})}}{2-\cos{(2\theta_{N-r})}}.
\end{multline*}
By Lemma~\ref{lem:param_select_ineq}, when the principal angle $\theta_{N-r}\in(\frac{\pi}{4}, \frac{\pi}{2}]$, we know
\begin{equation*}
\min_{c\in[c^\ast,1),~ \lambda\in(0,-\frac{\cos{(2\theta_{N-r})}}{\sin^2\theta_{N-r}}]}\, \max\{\abs{\rho_1}, \abs{\rho_2}\} > -\frac{\cos{(2\theta_{N-r})}}{2-\cos{(2\theta_{N-r})}}.
\end{equation*}
Therefore, the common point of the three surfaces $\Gamma_1$, $\Gamma_2$, and $\Gamma_3$ in \eqref{eq:three_surfs_cap} is still a good choice.
\item[2.] When $\lambda\in(-\frac{\cos{(2\theta_{N-r})}}{\sin^2\theta_{N-r}},2]$, define 
$\kappa = c\lambda^{2}\sin^{2} \theta_{N-r} - (1-c\cos(2 \theta_{N-r})) \lambda+1.$
We have
%\begin{equation*}
$
\frac{\partial \kappa}{\partial c} = \lambda(\lambda\sin^2\theta_{N-r} + \cos{(2\theta_{N-r})})>0,
%\frac{\partial \kappa}{\partial \lambda} &= 2c\lambda\sin^2\theta_{N-r} + c\cos{(2\theta_{N-r})} - 1.
$
%\end{equation*}
which implies $\kappa$ is monotonically increasing with respect to $c$ in the interval $[c^\ast,1)$. Thus, for any $c\geq c^\ast$, the $\abs{\rho_2(c,\lambda)}\geq\abs{\rho_2(c^\ast,\lambda)}$ holds. Again, recall the monotonicity of $\rho_1$, we obtain
\begin{equation*}
\min_{c\in[c^\ast,1),~ \lambda\in(-\frac{\cos{(2\theta_{N-r})}}{\sin^2\theta_{N-r}},2]} \!\!\!\!\!\!\max\{\abs{\rho_1}, \abs{\rho_2}\} =\!\!\! \underset{\lambda\in(-\frac{\cos{(2\theta_{N-r})}}{\sin^2\theta_{N-r}},2]}{\min}\!\!\!\!\max\{\abs{\rho_1(c^\ast,\lambda)}, \abs{\rho_2(c^\ast,\lambda)}\}.
\end{equation*}
Since $\abs{\rho_1(c^\ast,\lambda)} = 1-\lambda(1-c^\ast)$ and $\abs{\rho_2(c^\ast,\lambda)} = \abs{1 - \frac{\lambda}{1+\cot\theta_{N-r}}}$, when $\theta_{N-r}\in(\frac{\pi}{4}, \frac{\pi}{2}]$, $\frac{1}{1+\cot\theta_{N-r}} > 1-c^\ast$, then the equation $\abs{\rho_1(c^\ast,\lambda)} = \abs{\rho_2(c^\ast,\lambda)}$ has one and only one root 
\begin{equation*}
\lambda^\ast = \frac{2}{1+\frac{1}{1+\cot\theta_{N-r}}-\frac{1}{(\cos\theta_{N-r} + \sin\theta_{N-r})^2}}.
\end{equation*}
Therefore, we know when $\theta_{N-r}\in(\frac{\pi}{4},\frac{\pi}{2}]$, the minimum of $\max\{\abs{\rho_1},\abs{\rho_2},\abs{\rho_3}\}$ for $c\in[c^\ast,1)$ and $\lambda \in (-\frac{\cos{(2\theta_{N-r})}}{\sin^2\theta_{N-r}},2]$ is not larger than $1-\lambda^\ast(1-c^\ast)$.
\end{enumerate}
\par
Next, let us consider the case $\theta_{N-r}\in (0,\frac{\pi}{4}]$. When $c\in(0,c^\ast)$ and $\lambda\in(0,2]$, the discriminant $\Delta>0$, namely the quadratic equation \eqref{eq:quadratic_eq} has two real roots. Moreover, $\abs{\rho_2} > \abs{\rho_3}$ obviously. Thus, we only need to minimize the $\max\{\abs{\rho_1}, \abs{\rho_2}\}$.
Define
\begin{equation*}
\tilde{\kappa} = \lambda c \cos(2\theta_{N-r})-\lambda+2 + \lambda\sqrt{\cos^{2}(2 \theta_{N-r}) c^{2}-2 c+1}.
\end{equation*}
Since for any $\theta_{N-r}\in (0,\frac{\pi}{4}]$, $c\in(0,c^\ast)$, and $\lambda\in(0,2]$ the $\lambda c \cos(2\theta_{N-r})-\lambda+2>0$, we have $\abs{\rho_2} = \frac{1}{2}\tilde{\kappa}$. From
\begin{align*}
\frac{\partial\tilde{\kappa}}{\partial c} &= \lambda\Big(\cos(2\theta_{N-r}) + \frac{c\cos^2(2\theta_{N-r})-1}{\sqrt{\cos^{2}(2 \theta_{N-r}) c^{2}-2 c+1}}\Big) \leq 0,\\
\frac{\partial\tilde{\kappa}}{\partial\lambda} &= c \cos(2\theta_{N-r}) - 1 + \sqrt{\cos^{2}(2 \theta_{N-r}) c^{2}-2 c+1} \leq 0,
\end{align*}
we know the $\tilde{\kappa}$ is monotonically decreasing with respect to both $c$ and $\lambda$. 
Thus $\tilde{\kappa}$ take minimum at $c=c^\ast$ and $\lambda=2$. By Lemma~\ref{lem:param_select_ineq}, when the principal angle $\theta_{N-r}\in(0,\frac{\pi}{4}]$, we know
\begin{equation}\label{eq:scenario1_case1_1}
\min_{c\in(0,c^\ast),~\lambda\in(0,2]}\, \max\{\abs{\rho_1}, \abs{\rho_2}\} 
\geq \min_{c\in(0,c^\ast),~\lambda\in(0,2]}\abs{\rho_2}
=\frac{1}{2}\tilde{\kappa}(c^\ast,2) = c^\ast\cos{2\theta_{N-r}}.
\end{equation}
Notice, when $c=c^\ast$ and $\lambda=2$, the magnitude of $\rho_1$ and $\rho_2$ can be simplified as $\abs{\rho_1} = \abs{2c^\ast-1}$ and $\abs{\rho_2} = c^\ast\cos{2\theta_{N-r}}$, where $c^{\ast} = \frac{1}{(\cos\theta_{N-r} + \sin\theta_{N-r})^2}$. It is easy to check that $\abs{\rho_2}>\abs{\rho_1}$ holds for any $\theta_{N-r}\in(0,\frac{\pi}{4}]$. We have
\begin{align}\label{eq:scenario1_case1_2}
\min_{c\in(0,c^\ast),~\lambda\in(0,2]}\!\!\!\!\! \max\{\abs{\rho_1}, \abs{\rho_2}\} 
\leq \max\{\abs{\rho_1(c^\ast,2)}, \abs{\rho_2(c^\ast,2)}\} 
= \abs{\rho_2(c^\ast,2)} 
= c^\ast\cos{2\theta_{N-r}}.%\nonumber
\end{align}
From above \eqref{eq:scenario1_case1_1} and \eqref{eq:scenario1_case1_2}, we obtain the minimum of $\max\{\abs{\rho_1}, \abs{\rho_2}, \abs{\rho_3}\}$ equals $c^\ast \cos{2\theta_{N-r}}$, which is achieved at $c=c^\ast$ and $\lambda=2$.
When $c\in[c^\ast,2)$, following the similar argument as above, we can show $\abs{\rho_1} = 1-\lambda(1-c)$, which is monotonically increasing with respect to $c$ and monotonically decreasing with respect to $\lambda$. In addition, we also have $\abs{\rho_2} = \abs{\rho_3}$ which is monotonically increasing with respect to $c$. Thus, we have
\begin{align*}
\min_{c\in[c^\ast,1),~ \lambda\in(0,2]} \max\{\abs{\rho_1}, \abs{\rho_2}, \abs{\rho_3}\} 
&= \min_{\lambda\in(0,2]} \max\{\abs{\rho_1(c^\ast,\lambda)}, \abs{\rho_2(c^\ast,\lambda)}\} \\
&= \min_{\lambda\in(0,2]} \frac{1}{2}\lambda c^\ast \cos(2\theta_{N-r}) - \frac{1}{2}\lambda + 1. 
\end{align*}
The last equality above is due to the fact the $\abs{\rho_1(c^\ast,\lambda)} \leq \abs{\rho_2(c^\ast,\lambda)}$ holds for any $\theta_{N-r}\in(0,\frac{\pi}{4}]$.
From $\lambda c^\ast \cos(2\theta_{N-r}) - \lambda$ is monotonically decreasing with respect to $\lambda$, we know, in this case, the minimum equals $c^\ast\cos(2\theta_{N-r})$, which is taken at $c=c^\ast$ and $\lambda=2$. 
\par
To this end, let us make a summary of the parameter selection principle as follows.
\begin{itemize}[leftmargin=0.4cm]
\item[1.] When $\theta_{N-r}\in(\frac{3}{8}\pi,\frac{1}{2}\pi]$, a suitable choice of parameters are: $c=\frac{1}{2}$, $\lambda = \frac{4}{2-\cos{(2\theta_{N-r})}}$. The associated asymptotic linear convergence rate is governed by $-\frac{\cos{(2\theta_{N-r})}}{2-\cos{(2\theta_{N-r})}}$.
\item[2.] When $\theta_{N-r}\in(\frac{1}{4}\pi,\frac{3}{8}\pi]$, a suitable choice of parameters are: $c=c^\ast$, $\lambda =\lambda^\ast$. The associated asymptotic linear convergence rate is governed by $1-\lambda^\ast(1-c^\ast)$.
\item[3.] When $\theta_{N-r}\in(0,\frac{1}{4}\pi]$, a suitable choice of parameters are: $c=c^\ast$, $\lambda =2$. The associated asymptotic linear convergence rate is governed by $c^\ast\cos(2\theta_{N-r})$.
\end{itemize}

\begin{remark}\label{rmk:estimate_r}
The exact value of the principal angle $\theta_{N-r}$ in \eqref{eq:principle_angle_value} is unknown. But it is simple to estimate $\theta_{N-r}$ by counting the number of bad cells, e.g., let $\hat{r}$ be the number of $u_i\notin[m, M]$ and 
 use $\hat{r}$ instead of $r$ in \eqref{eq:principle_angle_value} to compute $\theta_{N-r}$. This gives a simple guideline \eqref{opt-parameter} for choosing nearly optimal parameters, which is efficient in all our numerical tests as shown in  Section~\ref{sec:experiments}.
\end{remark}

\begin{remark}
    In a large scale 3D problem, usually the ratio of bad cells with cell averages out of bound in the DG scheme is quite small. In such a case, we expect $r\ll N$, with which $\theta_{N-r}$ is very close to zero. In this case, by the discussions above, the convergence rate in Theorem \ref{thm-rate} becomes $-\frac{\cos{(2\theta_{N-r})}}{2-\cos{(2\theta_{N-r})}}$. If $\hat{r}$ is also a good approximation to $r$, which is usually true in this context, then we get the rate  \eqref{opt-rate}.
\end{remark}

With the  guideline \eqref{opt-parameter} for choosing nearly optimal parameters in \eqref{gDR2}, we can use the two-step limiter as explained in Section
\ref{sec-intro-limiter} to enforce bounds of DG solutions.
%%%%%%%%%%%%%%%%%%%%%%%%%%%%%%%%%%%%%%%%%%%%%%%%%%%%%%%%%%%%%%%%%%%%%%%%%%%%%%%%%%%%%%%%%%%%%%%%%%%%%%%%%%%%%%%%%%%%%%%
\section{Application to phase-field equations}\label{sec:application_chns}
%%%%%%%%%%%%%%%%%%%%%%%%%%%%%%%%%%%%%%%%%%%%%%%%%%%%%%%%%%%%%%%%%%%%%%%%%%%%%%%%%%%%%%%%%%%%%%%%%%%%%%%%%%%%%%%%%%%%%%%
%At pore scale, the Darcy velocity homogenization starts to break down. 
One of the popular approaches for modeling multi-phase fluid flow in micro-to-millimeter pore structures is to use phase-field equations \cite{frank2018direct}. Efficient and accurate pore-scale fluid dynamics simulators have important applications in digital rock physics (DRP), which has been extensively used in the petroleum industry for optimizing enhanced oil recovery schemes.

\subsection{Mathematical model}
In an open bounded domain $\Omega\subset\IR^d$ over a time interval $(0,T]$, the dimensionless CHNS equations are given by:
\begin{subequations}\label{eq:CHNS:model}
\begin{align}
\partial_t{\phi} - \frac{1}{\Pe}\div{(\mathcal{M}(\phi)\grad{\mu})} + \div{(\phi\vec{v})} &= 0 && \text{in}~(0,T]\times\Omega,\label{eq:CHNS:model1}\\
\mu + \Cn^2 \laplace{\phi} - \Phi'(\phi) &= 0  && \text{in}~(0,T]\times\Omega,\label{eq:CHNS:model2}\\
\partial_t{\vec{v}} + \vec{v}\cdot\grad{\vec{v}} - \frac{2}{\Rey}\div{\strain{(\vec{v})}} + \frac{1}{\Rey\Ca}\grad{p} - \frac{3}{2\sqrt{2}\,\Rey\Ca\Cn} \mu\grad{\phi} &= 0 && \text{in} ~(0,T]\times\Omega,\label{eq:CHNS:model3}\\
\div{\vec{v}} &= 0 && \text{in} ~(0,T]\times\Omega,\label{eq:CHNS:model4}
\end{align}
\end{subequations}
where $\phi$, $\mu$, $\vec{v}$, and $p$ are order parameter, chemical potential, velocity, and pressure. 
The non-dimensional quantities $\Pe$, $\Cn$, $\Rey$, and $\Ca$ denote the P\'eclet number, Cahn number, Reynolds number, and capillary number, respectively. 
The strain tensor is given by $\strain{(\vec{v})} = \frac{1}{2}(\grad{\vec{v}}+\transpose{(\grad{\vec{v}})})$.
%In the DRP applications, Reynolds numbers are not large in general, e.g., of order $\mathcal{O}(1)$.
The function $\mathcal{M}$ denotes mobility. Typical choices of $\mathcal{M}$ include the constant mobility $\mathcal{M}(\phi) = \mathcal{M}_0>0$, where $\mathcal{M}_0$ can be set to $1$ after nondimensionalization, and the degenerate mobility $\mathcal{M}(\phi) = 1- \phi^2$. 
The function $\Phi$ is a scalar potential, which is also called chemical energy density. Classical and widely used forms are the polynomial Ginzburg--Landau (GL) double well potential:
%\begin{equation*}
$\Phi(\phi) = \frac{1}{4}(1-\phi)^2(1+\phi)^2$ 
%\end{equation*}
and the Flory--Huggins (FH) logarithmic potential with parameters $\alpha$ and $\beta$:
%\begin{equation*}
$\Phi(\phi) = \frac{\alpha}{2}\big((1+\phi)\ln\big(\frac{1+\phi}{2}\big) + (1-\phi)\ln\big(\frac{1-\phi}{2}\big)\big) + \frac{\beta}{2}(1-\phi^2)$.
%\end{equation*}
\par
We supplement \eqref{eq:CHNS:model} with initials $\phi=\phi^0$ and $\vec{v}=\vec{v}^0$ on $\{0\}\times\Omega$.
Let $\normal$ denote the unit outward normal to domain $\Omega$. We decompose the boundary $\partial{\Omega}$ into three disjoint subsets $\partial{\Omega} = \partial{\Omega}^{\mathrm{wall}} \cup \partial{\Omega}^{\mathrm{in}} \cup \partial{\Omega}^{\mathrm{out}}$, where $\partial{\Omega}^{\mathrm{wall}}$ denotes fluid--solid interface and $\partial{\Omega}^{\mathrm{in}}$ and $\partial{\Omega}^{\mathrm{out}}$ are inflow boundary and outflow boundary
\begin{equation*}
\partial{\Omega}^{\mathrm{in}} = \{\vec{x}\in\partial{\Omega}:~ \vec{v}\cdot\normal < 0\}
~~\text{and}~~
\partial{\Omega}^{\mathrm{out}} = \partial{\Omega} \setminus (\partial{\Omega}^{\mathrm{wall}} \cup \partial{\Omega}^{\mathrm{in}}).
\end{equation*}
We prescribe Dirichlet boundary conditions $\phi = \phi_\mathrm{D}$ and $\vec{v} = \vec{v}_\mathrm{D}$ on $(0,T]\times\partial{\Omega}^{\mathrm{in}}$.
For velocity, the no-slip boundary condition $\vec{v}=\vec{0}$ is used on $(0,T]\times\partial{\Omega}^{\mathrm{wall}}$ and ``do nothing'' boundary condition $(2\strain{(\vec{v})} - \frac{1}{\Ca}p\vecc{I})\normal = \vec{0}$ is applied on $(0,T]\times\partial{\Omega}^{\mathrm{out}}$. 
Wettability is modeled by a contact angle $\vartheta$ that is enforced by:
%\begin{equation*}
$\grad{\phi}\cdot\normal = -\frac{2\sqrt{2} \cos(\vartheta)}{3\Cn} \mathit{g}'(\phi)$ %\quad 
on~$(0,T]\times(\partial\Omega^{\mathrm{wall}}\cup\partial\Omega^{\mathrm{out}})$,
%\end{equation*}
where the function $\mathit{g}$ is a blending function. The closed-form expression of $\mathit{g}$ depends on the choice of chemical energy density \cite{carlson2011dissipation}. For the Ginzburg--Landau potential, we have $\mathit{g}(\phi) = \frac{1}{4}(\phi^3-3\phi+2)$. 
In addition, we employ the homogeneous Neumann boundary condition $\mathcal{M}(\phi)\grad{\mu}\cdot\normal = 0$ on $(0,T]\times\partial{\Omega}$ to ensure the global mass conservation.
\par
The order parameter $\phi$ is the difference between the mass fraction $\phi_\mathrm{A}$ and $\phi_\mathrm{B}$ of the phase $\mathrm{A}$ and phase $\mathrm{B}$. 
With constraint $\phi_\mathrm{A} + \phi_\mathrm{B} = 1$ for a two-component mixture as well as mass fractions belonging to $[0,1]$, a physically meaningful range of the order parameter field is $[-1,1]$.
The Cahn--Hilliard equation with the degenerate mobility or with the logarithmic potential enjoys bound-preserving property \cite{tierra2015numerical}. However, for constant mobility with GL polynomial potential, the analytical solution of Cahn--Hilliard equation is not bound-preserving \cite{barrett1999finite}.
For a given initial data $\phi^0\in[-1,1]$, it is an open question whether the solution of a fully coupled CHNS system should have a bounded order parameter in $[-1,1]$.
On the other hand, empirically we would expect a reasonable solution, e.g., the discrete order parameter field, should be bounded by $-1$ and $1$ for any time $t>0$.

\subsection{Time discretization}
The CHNS equations form a highly nonlinear coupled system. One of the popular approaches of constructing efficient numerical algorithms for large-scale simulations in complex computational domains is to use splitting methods, e.g., to decouple the mass and momentum equations and to further split the convection from the incompressibility constraint \cite{shen2012modeling}. Also, see \cite{guermond2006overview,glowinski2003finite} for an overview of the splitting methods for time-dependent incompressible flows.
\par
We uniformly partition the interval $[0,T]$ into $\Nst$ subintervals. Let $\tau$ denote the time step size. 
For the chemical energy density, we adopt a convex--concave decomposition of the form $\Phi = \Phi_{+} + \Phi_{-}$, where the convex part $\Phi_{+}$ is treated time implicitly and the concave part $\Phi_{-}$ is treated time explicitly. 
For the nonlinear convection $\vec{v}\cdot\grad{\vec{v}}$, the form $\mathcal{C}(\cdot,\cdot)$ is a semi-discretization that satisfies a positivity property, see the equation (12) in \cite{liu2019interior}. 
For any $1 \leq n \leq \Nst$, our first-order time discretization algorithm  consists of the following steps:
\begin{itemize}[leftmargin=0.5cm]
\item[] Step~1. Given $(\phi^{n-1}, \vec{w}^{n-1})$, compute $(\phi^n, \mu^n)$ such that
\begin{align*}
\phi^n - \frac{\tau}{\Pe}\div{(\mathcal{M}(\phi^{n-1})\grad{\mu^n})} + \tau\div(\phi^{n}\vec{w}^{n-1}) = \phi^{n-1}& && \text{in}~\Omega,\\
-\mu^n - \Cn^2\laplace{\phi^n} + \Phi_+\,\!'(\phi^n) = - \Phi_-\,\!'(\phi^{n-1})& && \text{in}~\Omega.
% \phi^n = \phi_\mathrm{D}& && \text{on}~\partial\Omega^{\mathrm{in}},\\
% \grad{\phi^n} \cdot \normal = -\frac{2\sqrt{2} \cos(\vartheta)}{3\Cn}\,\mathit{g}'(\phi^{n-1})& && \text{on}~\partial\Omega^{\mathrm{wall}}\cup\partial\Omega^{\mathrm{out}},\\
% \mathcal{M}(\phi^{n-1})\grad{\mu^n} \cdot \normal = 0& && \text{on}~\partial\Omega.
\end{align*}
\item[] Step~2. Given $(\phi^{n}, \mu^n, \vec{v}^{n-1}, p^{n-1}, \psi^{n-1})$, compute $\vec{v}^n$ such that
\begin{align*}
\vec{v}^n + \tau \mathcal{C}(\vec{v}^{n-1},\vec{v}^n) - \frac{2\tau}{\Rey}\div{\strain{(\vec{v}^n})} &= \vec{v}^{n-1} \nonumber\\ - \frac{\tau}{\Rey\Ca}\grad{(p^{n-1}+\psi^{n-1})} &+ \frac{3\tau}{2\sqrt{2}\,\Rey\Ca\Cn} \mu^{n}\grad{\phi^n} && \text{in}~\Omega.
% \vec{v}^n &= \vec{v}_\mathrm{D} && \text{on}~\partial\Omega^\mathrm{in},\\
% \vec{v}^n &= \vec{0} && \text{on}~\partial\Omega^\mathrm{wall},\\
% (2\strain{(\vec{v}^n)} - \frac{1}{\Ca}p^{n-1}\vecc{I})\normal &= \vec{0} && \text{on}~\partial\Omega^{\mathrm{out}}.
\end{align*}
\item[] Step~3. Given $\vec{v}^n$, compute $\psi^n$ such that
\begin{align*}
-\laplace{\psi^n} &= -\frac{\Rey\Ca}{\tau}\div{\vec{v}^n} && \text{in}~\Omega.
% \grad{\psi^n}\cdot\normal &= 0 && \text{on}~\partial\Omega^\mathrm{in}\cup\partial\Omega^\mathrm{wall},\\
% \psi^n &= 0 && \text{on}~\partial\Omega^\mathrm{out}.
\end{align*}
\item[] Step~4. Given $(\vec{v}^{n}, p^{n-1}, \psi^n)$, compute $(\vec{w}^n, p^n)$ such that
\begin{align*}
\vec{w}^n &= \vec{v}^n - \frac{\tau}{\Rey\Ca} \grad{\psi^n},\\
p^n &= p^{n-1} + \psi^n - \sigma_\chi \Ca \div{\vec{v}^n}.
\end{align*}
\end{itemize}
The parameter $\sigma_\chi$ is equal to $\frac{2}{d}$, namely, we use $\sigma_\chi=\frac{2}{3}$ for our numerical simulations in three dimensions.
To start time marching, we set $p^0 = 0$ and $\psi^0=0$. The functions $\phi^0$ and $\vec{w}^0 = \vec{v}^0$ are given initial data.
\begin{remark}
The above scheme  is a combination of the convex splitting approach for the Cahn--Hilliard equation with the classical rotational  pressure-correction algorithm  (see Section~3.4 in \cite{guermond2006overview}) for the Navier--Stokes equations. More precisely, Step~2 to Step~4 can be rewritten as follows:
\begin{align*}
&\frac{1}{\tau}(\vec{v}^n-\vec{w}^{n-1}) + \mathcal{C}(\vec{v}^{n-1},\vec{v}^n) - \frac{2}{\Rey}\div{\strain{(\vec{v}^n})} = -\frac{1}{\Rey\Ca}\grad{p^{n-1}} + \frac{3}{2\sqrt{2}\,\Rey\Ca\Cn} \mu^{n}\grad{\phi^n},\\
&\begin{cases}
\displaystyle \frac{1}{\tau}(\vec{w}^{n}-\vec{v}^n) + \frac{1}{\Rey\Ca}\grad{\psi^n} = 0,\\
\div{\vec{w}^{n}} = 0,
\end{cases}\hspace{2.75cm}
\psi^n = p^n - p^{n-1} + \sigma_\chi \Ca \div{\vec{v}^n}.
\end{align*}
We use  $\vec{w}^{n-1}$,  instead of $\vec{v}^{n-1}$, in the advection term in Step~1, since $\div{\vec{w}^{n-1}}=0$. 

For the sake of simplicity, we only presented a first-order version of the scheme, although high-order version can be constructed accordingly. On the other hand, 
it is also possible to construct energy dissipating schemes as in \cite{shen2015decoupled}. Since our focus in this paper is   in preserving bounds for a DG spacial discretization, we employ a simple time-marching strategy. 
\end{remark}

\subsection{Space discretization}
Decoupled splitting algorithms combined with interior penalty DG spatial formations have been constructed to solve various CHNS models in large-scale complex-domain DRP simulations \cite{frank2018direct,liu2020efficient,liu2022pressure}. Also, see \cite{liu2022convergence,masri2022discontinuous,masri2023improved} for solvability, stability, and optimal error estimates on using DG with decoupled splitting schemes for CHNS equations and viscous incompressible flow. Here, we briefly review the fully discrete scheme.
\par
Let $\setE_h = \{E_i\}$ be a family of conforming nondegenerate (regular) meshes of the domain $\Omega$ with maximum element diameter $h$. Let $\Gammah$ be the set of interior faces. For each interior face $e \in \Gammah$ shared by elements $E_{i^-}$ and $E_{i^+}$, with $i^- < i^+$, we define a unit normal vector $\normal_e$ that points from $E_{i^-}$ into $E_{i^+}$. For a boundary face, $e \subset \partial\Omega$, the normal vector $\normal_e$ is taken to be the unit outward vector to $\partial\Omega$.
Let $\IP_k(E_i)$ denote the set of all polynomials of degree at most $k$ on an element $E_i$.
Define the broken polynomial spaces $X_h$ and $\mathbf{X}_h$, for any $k \geq 1$,
\begin{align*}
X_h &= \{ \chi_h \in L^2(\Omega):~\chi_h|_{E_i}\in\IP_k(E_i),~ \forall E_i\in\setE_h\}, \\
\mathbf{X}_h &= \{ \vec{\theta}_h \in L^2(\Omega)^d:~\vec{\theta}_h|_{E_i}\in\IP_k(E_i)^d,~ \forall E_i\in\setE_h\}.
\end{align*}
The average and jump for any scalar quantity $\chi$ on a boundary face coincide with its trace; and on interior faces they are defined by
\begin{equation*}
\avg{\chi}|_e = \frac{1}{2}\on{\chi}{E_{i^-}} + \frac{1}{2}\on{\chi}{E_{i^+}}, \quad
\jump{\chi}|_e = \on{\chi}{E_{i^-}} - \on{\chi}{E_{i^+}}, \quad 
\forall e = \partial E_{i^{-}}\cap\partial E_{i^{+}}.
\end{equation*}
The related definitions for any vector quantity are similar. For more details see \cite{riviere2008discontinuous}.
\par
Let $(\cdot,\cdot)_{\mathcal{O}}$ denote the $L^2$ inner product over ${\mathcal{O}}$. For instance, on any face $e$ the $L^2$ inner product is denoted by $(\cdot,\cdot)_e$. We make use of the following compact notation for the $L^2$ inner product on the interior and boundary faces
\begin{equation*}
(\cdot,\cdot)_{\mathcal{O}} = \sum_{e\in\mathcal{O}} (\cdot,\cdot)_e, ~~\text{where}~~ 
\mathcal{O} = \Gammah,~\partial\Omega,~\partial\Omega^\mathrm{in},~\partial \Omega^\mathrm{out},~ \cdots.
\end{equation*}
For convenience, we omit the subscript when $\mathcal{O} = \Omega$, namely denote $(\cdot,\cdot) = (\cdot,\cdot)_\Omega$. 
We still use $\grad{}$ and $\div{}$ to denote the broken gradient and broken divergence. 
\par
For completeness, let us recall the DG forms below and we skip their derivation. 
Associated with the advection term $\div{(\phi\vec{w})}$ and the convection term $\vec{v}\cdot\grad{\vec{z}}$, we define
\begin{align*}
a_\mathrm{adv}(\phi,\vec{w},\chi) &=  
-(\phi,\vec{w} \cdot \grad{\chi})
+ (\upwind{\phi} \avg{\vec{w}\cdot\normal_e}, \jump{\chi})_{\Gammah}, \\
a_\mathrm{conv}(\vec{v};\vec{z},\vec{\theta}) &= 
(\vec{v}\cdot\grad{\vec{z}},\vec{\theta}) 
+ \frac{1}{2} (\div{\vec{v}}, \vec{z}\cdot\vec{\theta}) \\
&- \frac{1}{2} (\jump{\vec{v}\cdot\normal_e}, \avg{\vec{z}\cdot\vec{\theta}})_{\Gammah\cup\partial\Omega^\mathrm{in}} 
+ \sum_{E\in\setE_h} (\abs{\avg{\vec{v}}\cdot\normal_{E}},(\vec{z}^\mathrm{int} - \vec{z}^\mathrm{ext})\cdot\vec{\theta}^\mathrm{int})_{\partial E_{-}^{\vec{v}}}.
\end{align*}
The superscript $\mathrm{int}$ (resp. $\mathrm{ext}$) refers to the trace of a function on a face of $E$ coming from the interior (resp. exterior).
The set $\partial E_{-}^{\vec{v}}$ is the upwind part of $\partial{E}$, defined by $\partial E_{-}^{\vec{v}} = \{\vec{x}\in\partial{E}: \avg{\vec{v}}\cdot\normal_E<0\}$, where $\normal_E$ is the unit outward normal vector to $E$ \cite{girault2005discontinuous}.
The upwind quantity $\upwind{\phi}$ on an interior face $e$ is evaluated by
\begin{equation*}
\on{\phi^\uparrow}{e\in\Gammah} =
\begin{cases}
\on{\phi}{E_{i^-}} & \text{if}~\avg{\vec{w}}\cdot\normal_e \geq 0, \\
\on{\phi}{E_{i^+}} & \text{if}~\avg{\vec{w}}\cdot\normal_e < 0.
\end{cases} 
\end{equation*}
Associated with the operator $-\div{(z\grad{\xi})}$, we define
\begin{align*}
a_\mathrm{diff}(z;\xi,\chi) &= 
(z\grad{\xi}, \grad{\chi}) 
- (\avg{z\grad{\xi}\cdot\normal_e}, \jump{\chi})_{\Gammah} \\
&- (\avg{z\grad{\chi}\cdot\normal_e}, \jump{\xi})_{\Gammah}
+ \frac{\sigma}{h} (\jump{\xi},\jump{\chi})_{\Gammah}.
\end{align*}
Associated with the Laplace operator $-\laplace{\xi}$ (for terms $-\laplace{\phi}$ and $-\laplace{\psi}$), we define
\begin{align*}
-\laplace{\xi}+\text{Dirichlet on}~\partial{\Omega}^\mathrm{in}~ \leadsto &&
a_\mathrm{diff,in}(\xi,\chi) &= a_\mathrm{diff}(1; \xi,\chi)
- (\grad{\xi}\cdot\normal_e, \chi)_{\partial\Omega^\mathrm{in}} \\  &&
&- (\grad{\chi}\cdot\normal_e, \xi)_{\partial\Omega^\mathrm{in}}
+\frac{\sigma}{h} (\xi, \chi)_{\partial\Omega^\mathrm{in}},
\\
-\laplace{\xi}+\text{Dirichlet on}~\partial{\Omega}^\mathrm{out}~ \leadsto &&
a_\mathrm{diff,out}(\xi,\chi) &= a_\mathrm{diff}(1; \xi,\chi)
- (\grad{\xi}\cdot\normal_e, \chi)_{\partial\Omega^\mathrm{out}} \\ &&
&- (\grad{\chi}\cdot\normal_e, \xi)_{\partial\Omega^\mathrm{out}}
+\frac{\sigma}{h} (\xi, \chi)_{\partial\Omega^\mathrm{out}}.
\end{align*}
Associated with the diffusion term $-2\div{\strain(\vec{v})}$, we define 
\begin{multline*}
a_\mathrm{\mathrm{ellip}}(\vec{v},\vec{\theta}) = 
2(\strain{(\vec{v})}, \strain{(\vec{\theta})})
- 2(\avg{\strain{(\vec{v}}) \normal_e}, \jump{\vec{\theta}})_{\Gammah} 
- 2(\avg{\strain{(\vec{\theta}}) \normal_e}, \jump{\vec{v}})_{\Gammah} \\
+ \frac{\sigma}{h} (\jump{\vec{v}}, \jump{\vec{\theta}})_{\Gammah}
- 2(\strain{(\vec{v}})\normal_e, \vec{\theta})_{\partial\Omega^\mathrm{in}}
- 2(\strain{(\vec{\theta}}) \normal_e, \vec{v})_{\partial\Omega^\mathrm{in}}
+ \frac{\sigma}{h} (\vec{v}, \vec{\theta})_{\partial\Omega^\mathrm{in}}.
\end{multline*}
The remaining forms in the right-hand sides of the discrete equations account for the boundary conditions (see $b_\mathrm{diff}$ and $b_\mathrm{vel}$) and the pressure and potential (see $b_\mathrm{pres}$): 
\begin{align*}
b_\mathrm{diff}(\xi,\chi) 
&= -(\phi_\mathrm{D}, \grad{\chi}\cdot\normal_e)_{\partial\Omega^\mathrm{in}}\!
+\! \frac{\sigma}{h} (\phi_\mathrm{D}, \chi)_{\partial\Omega^\mathrm{in}}\!\!
- \frac{2\sqrt{2}\delta\cos(\vartheta)}{3\,\mathrm{Cn}} (\mathit{g}'(\xi),\chi)_{\partial\Omega^{\mathrm{wall}}\cup\partial\Omega^{\mathrm{out}}},\\
b_\mathrm{pres}(p,\psi,\vec{\theta}) 
&= -(p, \div{\vec{\theta}})
+(\avg{p}, \jump{\vec{\theta}\cdot\vec{n}_e})_{\Gammah\cup\partial\Omega} + (\grad{\psi}, \vec{\theta}),\\
b_\mathrm{vel}(\vec{\theta})
&= -\frac{3}{2}(\vec{v}_\mathrm{D}\cdot\vec{n}, \vec{v}_\mathrm{D} \cdot\vec{\theta})_{\partial\Omega^\mathrm{in}}
- \frac{2}{\Rey} (\strain{(\vec{\theta})\vec{n}_e}, \vec{v}_\mathrm{D})_{\partial\Omega^\mathrm{in}}
+ \frac{\sigma}{h \Rey} (\vec{v}_\mathrm{D}, \vec{\theta})_{\partial\Omega^\mathrm{in}}.
\end{align*}
In $b_\mathrm{diff}$, the parameter $\delta$ is a scalar field that equals the constant one for smooth solid boundaries only and that otherwise corrects the numerical impact of the jaggedness of the solid boundaries obtained from micro-CT scanning. The derivation of this boundary condition and the wettability model can be found in \cite{frank2018energy}.
\par
For any $1 \leq n \leq \Nst$, our fully discrete scheme for solving the CHNS equations \eqref{eq:CHNS:model} is as follows. 
\begin{itemize}[leftmargin=0.5cm]
\item[] {\bf Algorithm CHNS.} At time $t^n$, given scalar functions $\phi^{n-1}_h,p^{n-1}_h,\psi^{n-1}_h$ in $X_h$ and vector functions $\vec{v}^{n-1}_h, \vec{w}^{n-1}_h$ in $\mathbf{X}_h$.
\item[] Step~1. Compute $\phi^n_h,\mu^n_h\in X_h$, such that for all $\chi_h\in X_h$,
\begin{align*}
(\phi_h^n,\chi_h) + \frac{\tau}{\Pe} a_\mathrm{diff}(\mathcal{M}(\phi_h^{n-1}); \mu_h^n,\chi_h) &+ \tau a_\mathrm{adv}(\phi_h^n,\vec{w}_h^{n-1},\chi_h) \\
&= (\phi_h^{n-1},\chi_h) + \tau (\phi_\mathrm{D}\vec{w}_h^{n-1}\cdot\normal_e,\chi_h)_{\partial{\Omega}^\mathrm{in}},\\
- (\mu_h^n,\chi_h) + \Cn^2 a_\mathrm{diff,in}(\phi_h^n,\chi_h) &+ (\Phi_{+}\,\!'(\phi_h^n),\chi_h) \\
&= \Cn^2 b_\mathrm{diff}(\phi_h^{n-1},\chi_h) - (\Phi_{-}\,\!'(\phi_h^{n-1}),\chi_h).
\end{align*}
\item[] Step~2. Compute $\vec{v}^n_h\in\mathbf{X}_h$, such that for all $\vec{\theta}_h\in\mathbf{X}_h$,
\begin{multline*}
(\vec{v}^n_h,\vec{\theta}_h) + \tau a_{\mathrm{conv}}(\vec{v}^{n-1}_h,\vec{v}^n_h,\vec{\theta}_h) + \frac{\tau}{\Rey} a_{\mathrm{ellip}}(\vec{v}^n_h,\vec{\theta}_h) = (\vec{v}^{n-1}_h,\vec{\theta}_h) \\
- \frac{\tau}{\Rey\Ca} b_{\mathrm{pres}}(p^{n-1}_h,\psi_h^{n-1},\vec{\theta}_h)
+ \frac{3\tau}{2\sqrt{2}\,\Rey\Ca\Cn} (\mu_h^n\grad{\phi_h^n},\vec{\theta}_h) + \tau b_{\mathrm{vel}}(\vec{\theta}_h).
\end{multline*}
\item[] Step~3. Compute $\psi^n_h\in X_h$, such that for all $\chi_h\in X_h$,
\begin{equation*}
a_{\mathrm{diff, out}}(\psi^n_h,\chi_h) = -\frac{\Rey\Ca}{\tau}(\div{\vec{v}_h^n},\chi_h).
\end{equation*}
\item[] Step~4. Compute $\vec{w}_h^n \in \mathbf{X}_h$ and $p_h^n \in X_h$, such that for all $\vec{\theta} \in \mathbf{X}_h$ and $\chi_h \in X_h$,
\begin{align*}
&(\vec{w}^n_h,\vec{\theta}_h) + \sigma_\mathrm{div} (\div{\vec{w}^n_h}, \div{\vec{\theta}_h}) = (\vec{v}^n_h,\vec{\theta}_h) - \frac{\tau}{\Rey\Ca}(\grad{\psi^n_h},\vec{\theta}_h),\\
&(p^n_h,\chi_h) = (p^{n-1}_h,\chi_h) + (\psi^n_h,\chi_h) - \sigma_\chi \Ca (\div{\vec{v}^n_h}, \chi_h).
\end{align*}
\end{itemize}
For the initial conditions, we set $p^0_h = \psi^0_h = 0$, $\vec{w}^0_h = \vec{v}^0_h$;  we compute $\phi^0_h$ from the $L^2$ projection of $\phi^0$ followed with Zhang--Shu limiter and we obtain $\vec{v}^0_h$ from the $L^2$ projection of $\vec{v}^0$.
\par
To obtain a bound-preserving discrete order parameter field, at each time step after finishing computing Step~1 in the Algorithm~CHNS, we apply the two-stage limiting strategy, see Section~\ref{sec-intro-limiter}, to postprocess discrete order parameter $\phi_h^n$.
For the simulations in Section~\ref{sec:experiments}, we choose $m=-1$ and $M=1$.

%%%%%%%%%%%%%%%%%%%%%%%%%%%%%%%%%%%%%%%%%%%%%%%%%%%%%%%%%%%%%%%%%%%%%%%%%%%%%%%%%%%%%%%%%%%%%%%%%%%%%%%%%%%%%%%%%%%%%%%
\section{Numerical experiments}\label{sec:experiments}
%%%%%%%%%%%%%%%%%%%%%%%%%%%%%%%%%%%%%%%%%%%%%%%%%%%%%%%%%%%%%%%%%%%%%%%%%%%%%%%%%%%%%%%%%%%%%%%%%%%%%%%%%%%%%%%%%%%%%%%
In this section, we first verify the high order accuracy of our cell average limiter \eqref{gDR-average} for a manufactured smooth solution. Then we verify the efficiency of the limiter \eqref{gDR-average} when using the parameters \eqref{opt-parameter} on some representative physical simulations including spinodal decomposition, flows in micro structure, and merging droplets.
\par
We use $\IP_2$ scheme, e.g., discontinuous piecewise quadratic polynomials for space approximation, on cubic partitions of 3D domains. More details can be found 
in  \cite{frank2018finite}.
\par
The penalty parameters for all tests are as follows. We use $\sigma = 8$ on $\Gammah$ for $a_\mathrm{diff}$; $\sigma = 16$ on $\partial{\Omega}$ for $a_\mathrm{diff,in}$ and $a_\mathrm{diff,out}$; $\sigma = 32$ on $\Gammah$ and $\sigma = 64$ on $\partial{\Omega}^\mathrm{in}$ for $a_\mathrm{ellip}$. In addition, we set tolerance $\epsilon = 10^{-13}$ to terminate Douglas--Rachford iterations.

\subsection{Accuracy test}
We use the manufactured solution method on domain $\Omega=(0,1)^3$ with end time $T=0.1$ to test the spatial order of convergence for our cell average limiter \eqref{gDR-average}. 
\par
To trigger the cell average limiter \eqref{gDR-average}, e.g., produce a fully discrete solution with cell average out of $[-1,1]$ at each time step, we use constant mobility with GL polynomial potential and choose the prescribed order parameter field as an expression of a cosine function to power eight, as follows: 
$
\phi = 1-2\cos^{8}{\big(t+{\textstyle\frac{2\pi}{3}}(x+y+z)\big)}.
$
The chemical potential $\mu$ is an intermediate variable, which value is derived by the order parameter $\phi$. 
The prescribed velocity and pressure fields are taken from the Beltrami flow \cite{masri2022discontinuous}, which enjoys the property that the nonlinear convection is balanced by the pressure gradient and the velocity is parallel to vorticity. We have
%\begin{align*}
$
\vec{v} = 
\begin{bmatrix}
-e^{-t+x}\sin{(y+z)}-e^{-t+z}\cos{(x+y)}\\
-e^{-t+y}\sin{(x+z)}-e^{-t+x}\cos{(y+z)}\\
-e^{-t+z}\sin{(x+y)}-e^{-t+y}\cos{(x+z)}
\end{bmatrix}
$
%\end{align*}
and
%\begin{align*}
$p = -e^{-2t}(e^{x+z}\sin{(y+z)}\cos{(x+y)}
+e^{x+y}\sin{(x+z)}\cos{(y+z)}
+e^{y+z}\sin{(x+y)}\cos{(x+z)}
+{\textstyle\frac{1}{2}}e^{2x} + {\textstyle\frac{1}{2}}e^{2y} + {\textstyle\frac{1}{2}}e^{2z}
- \overline{p^0})$,
%\end{align*}
where $\overline{p^0} = 7.63958172715414$ guarantees zero average pressure over $\Omega$ for any $t>0$ up to round-off error.
The initial conditions and Dirichlet boundary condition for velocity are imposed by above manufactured solutions. For order parameter and chemical potential, we apply Neumann boundary condition. In addition, the right-hand side terms is evaluated by the prescribed solution.
% \par
% Let $i$, $j$, and $\nu$ denote the indexes of element, degree of freedom, and quadrature point. 
% Define the discrete $L^2_h$ error and $L^\infty_h$ error as follows:
% \begin{align*}
% L^2_h~\text{error:}&\quad 
% \norm{\phi_h^n - \phi(t^n)}{L^2_h}^2,\\
% L^\infty_h~\text{error:}&\quad
% \norm{\phi_h^n - \phi(t^n)}{L^\infty_h}.
% \end{align*}
%\begin{align*}
%L^2_h~\text{error:}&\quad 
%\norm{\phi_h^n - \phi(t^n)}{L^2_h}^2 = \frac{h^3}{8} \sum_{i} \sum_{\nu} \omega_\nu \Big|\sum_{j} \phi_{ij}^n\,\hat{\varphi}_j(\hat{\vec{q}}_\nu) - \phi(t^n)\circ \vec{F}_i(\hat{\vec{q}}_\nu)\Big|^2,\\
%L^\infty_h~\text{error:}&\quad
%\norm{\phi_h^n - \phi(t^n)}{L^\infty_h} = \max_{i}\,\max_{\nu} \Big|\sum_{j} \phi_{ij}^n\,\hat{\varphi}_j(\hat{\vec{q}}_\nu) - \phi(t^n)\circ \vec{F}_i(\hat{\vec{q}}_\nu)\Big|,
%\end{align*}
%where $\hat{\vec{q}}_\nu$ and $\omega_\nu$ are the $\nu^\mathrm{th}$ quadrature point and weight on the reference element; $\hat{\varphi}_j$ is the $j^\mathrm{th}$ basis on the reference element; and $\vec{F}_i$ is a mapping from the reference element to cell $E_i$.
%See \cite{frank2018finite} for more details.
\par
Let us estimate the spatial rates of convergence by computing solutions on a sequence of uniformly refined meshes with fixed time step size $\tau = 10^{-4}$. In our experiments, the time step size is small enough such that the spatial error dominates. We choose $\Rey = 1$, $\Ca = 1$, $\Pe = 1$, $\Cn = 1$, and the contact angle $\vartheta=90^\circ$ on $\partial{\Omega}$.
If $\mathtt{err}_{h}$ denotes the error on a mesh with resolution $h$, then the rate is given by $\ln(\mathtt{err}_{h}/\mathtt{err}_{h/2})/\ln{2}$. 
\par
We compare the $L^2_h$ rate and the $L^\infty_h$ rate of order parameter in three scenarios: not applying any limiter, only applying the cell average limiter \eqref{gDR-average}, and applying both limiters \eqref{gDR-average} and \eqref{zhang-shu}.
In those applied cell average limiter \eqref{gDR-average} cases, the limiter is triggered at each time step, see Figure~\ref{fig:numerical_experiments:accuracy} for the ratio of the number of trouble cells to the number of total elements. 
The convergence of our original DG scheme without applying any limiter is optimal, see the top rows in Table~\ref{tab:space_rate_CH}. The middle and bottom rows in Table~\ref{tab:space_rate_CH} show optimal convergence of the cases that only apply cell average limiter \eqref{gDR-average} and apply both cell average limiter \eqref{gDR-average} and Zhang--Shu limiter \eqref{zhang-shu}. 
Our limiting strategy preserves high order accuracy.
\textcolor{black}{We emphasize that DG methods with only the Zhang-Shu limiter will produce cell averages outside of the range $[-1,1]$ for this particular test.}

\begin{figure}[htbp]
\centering
\begin{tabularx}{0.75\linewidth}{@{}c@{\hspace{1.5cm}}c@{}}
\includegraphics[width=0.3\textwidth]{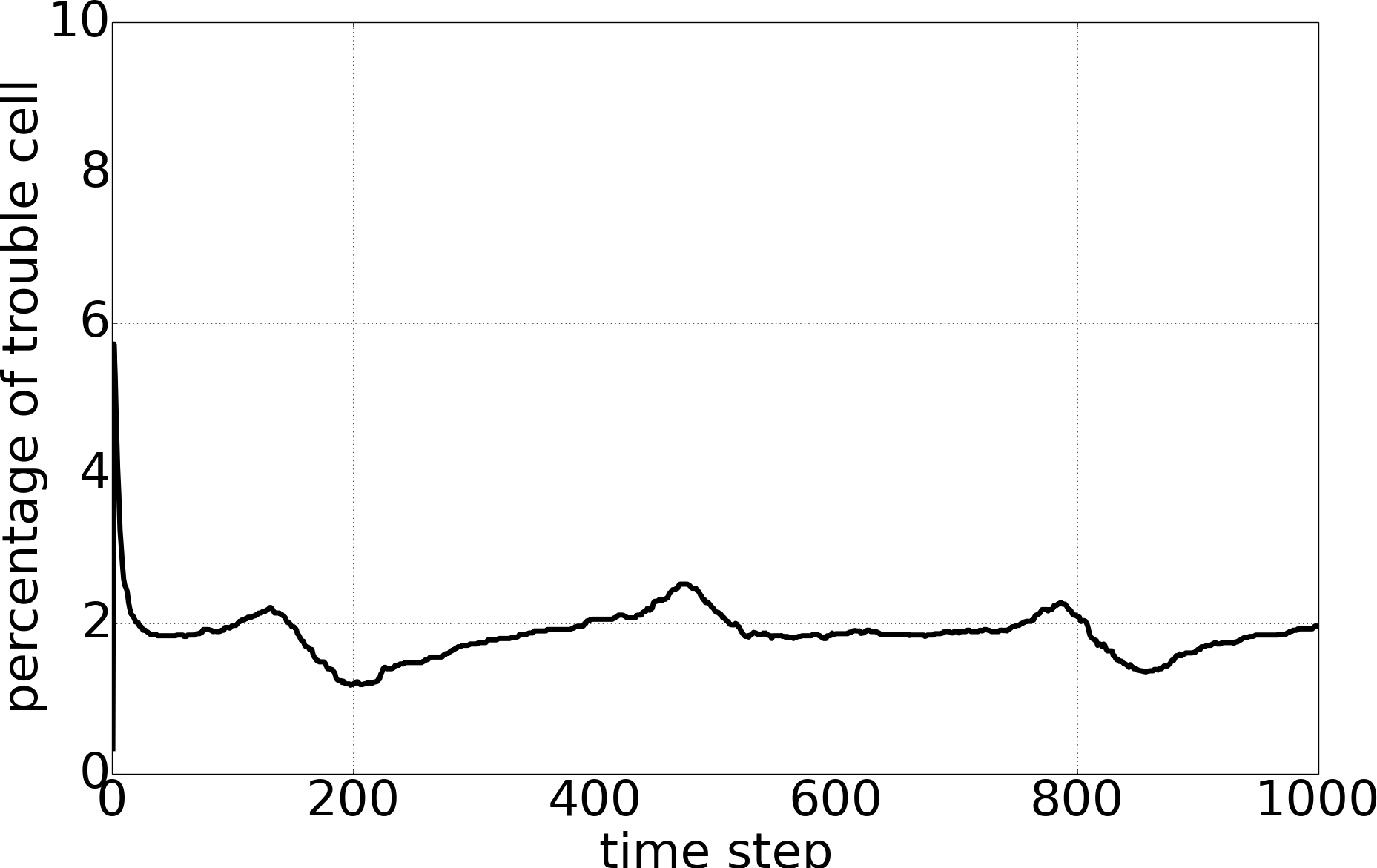} &
\includegraphics[width=0.3\textwidth]{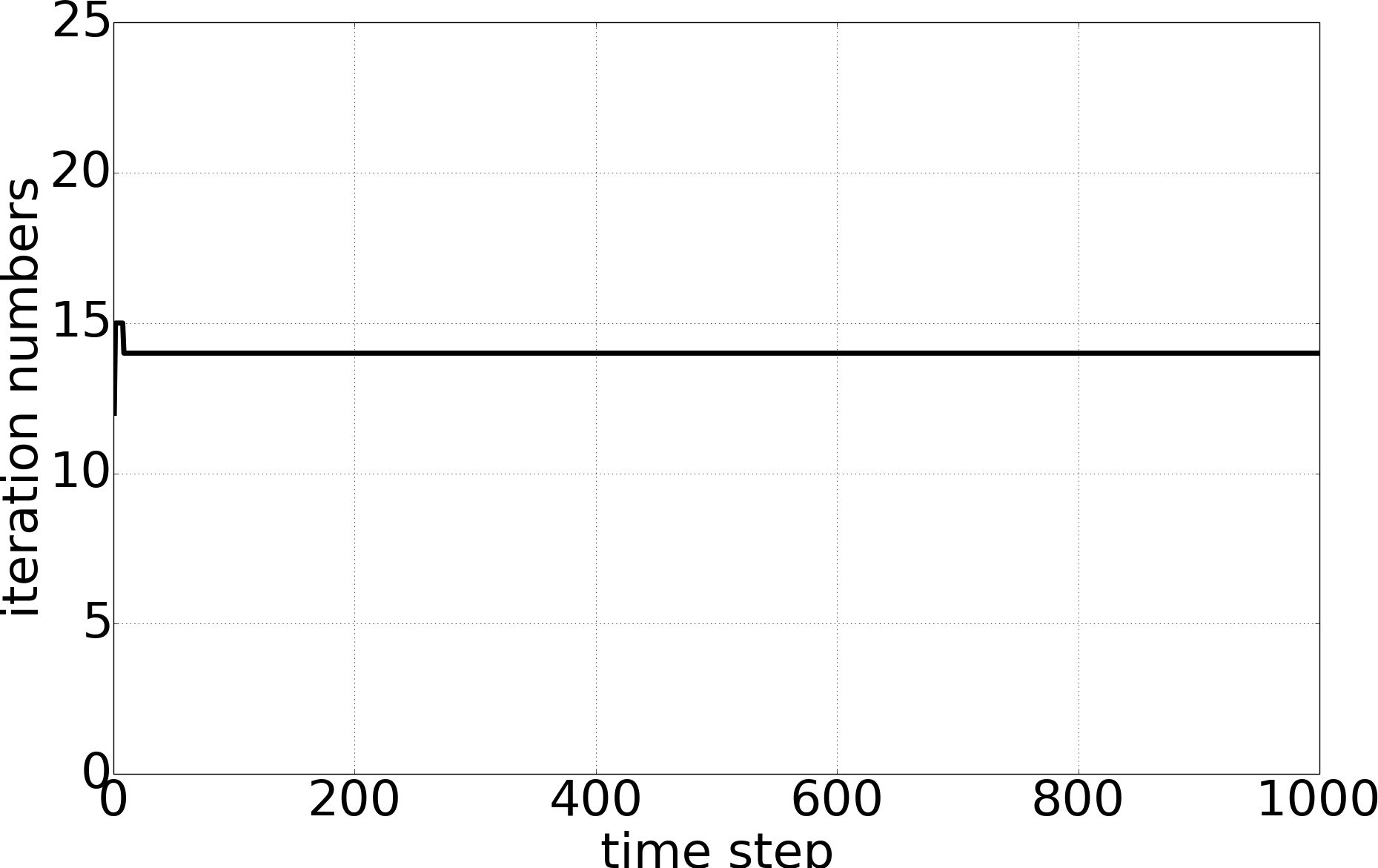} \\
\end{tabularx}
\caption{The performance of limiting strategy in the accuracy test of applying both limiters \eqref{gDR-average} and \eqref{zhang-shu} with mesh resolution $h = 1/2^5$. Left: the percentage of trouble cells at each time step for the cell average limiter \eqref{gDR-average}. Right: the number of Douglas--Rachford iterations at each time step. For each time step, at most 15 iterations are needed for \eqref{gDR2}}.
\label{fig:numerical_experiments:accuracy}
\end{figure}
\begin{table}[htbp]
\centering
\begin{tabularx}{0.95\linewidth}{@{~}c@{~~}|c@{~}|C@{\hspace{-1em}}C@{~}|C@{\hspace{-1em}}C@{~}}
\toprule
~ & $h$ & $\norm{\phi_h^{\Nst}\!-\phi(T)}{L^2_h}$ & rate & $\norm{\phi_h^{\Nst}\!-\phi(T)}{L^\infty_h}$ & rate \\
\midrule
\multirow{4}{*}{\begin{sideways}{no limiter$\hspace{0.25em}$}\end{sideways}} 
& $1/2^2$  & $2.034$\,E$-1$ & ---     & $5.636$\,E$-1$ & ---     \\
& $1/2^3$  & $4.903$\,E$-2$ & $2.053$ & $1.400$\,E$-1$ & $2.009$ \\
& $1/2^4$  & $5.714$\,E$-3$ & $3.101$ & $2.731$\,E$-2$ & $2.358$ \\
& $1/2^5$  & $4.833$\,E$-4$ & $3.564$ & $4.699$\,E$-3$ & $2.548$ \\
\midrule
\multirow{4}{*}{\begin{sideways}{DR$\hspace{0.25em}$}\end{sideways}} 
& $1/2^2$  & $2.053$\,E$-1$ & ---     & $5.826$\,E$-1$ & ---     \\
& $1/2^3$  & $4.954$\,E$-2$ & $2.051$ & $1.485$\,E$-1$ & $1.972$ \\
& $1/2^4$  & $5.720$\,E$-3$ & $3.115$ & $2.799$\,E$-2$ & $2.408$ \\
& $1/2^5$  & $4.834$\,E$-4$ & $3.565$ & $4.734$\,E$-3$ & $2.564$ \\
\midrule
\multirow{4}{*}{\begin{sideways}{DR$+$ZS$\hspace{0.25em}$}\end{sideways}} 
& $1/2^2$  & $2.872$\,E$-1$ & ---     & $7.631$\,E$-1$ & ---     \\
& $1/2^3$  & $5.970$\,E$-2$ & $2.266$ & $2.561$\,E$-1$ & $1.575$ \\
& $1/2^4$  & $7.181$\,E$-3$ & $3.057$ & $3.926$\,E$-2$ & $2.706$ \\
& $1/2^5$  & $4.833$\,E$-4$ & $3.893$ & $4.734$\,E$-3$ & $3.052$ \\
\bottomrule
\end{tabularx}
\caption{Errors and spatial convergence rates of order parameter. Top: the original DG scheme without applying any limiters. Middle: only apply the cell average limiter \eqref{gDR-average} (DR). Bottom: apply both of the cell average limiter \eqref{gDR-average} and Zhang--Shu limiter \eqref{zhang-shu}.}
\label{tab:space_rate_CH}
\end{table}

\vspace{-0.1cm}
\subsection{Spinodal decomposition}
Spinodal decomposition is a phase separation mechanism, by which an initially thermodynamically unstable homogeneous mixture spontaneously decomposes into two separated phases that are more thermodynamically favorable. The spinodal decomposition test is a widely used benchmark for validating CHNS simulators. 
In this part, we employ the degenerate mobility with GL polynomial potential. 
\par
We define a trefoil-shaped pipe, which is a set of points whose distance away from  the following parametric curve is less than $0.09$.
A trefoil knot: $x(t) = \frac{1}{8}(\cos{t}+2\cos{2t}) + \frac{1}{2}$, $y(t) = \frac{1}{8}(\sin{t}-2\sin{2t}) + \frac{1}{2}$, and $z(z) = \frac{1}{4} \sin{3t} + \frac{1}{2}$, where $t\in [0,2\pi]$.
% \begin{equation*}
% \text{A trefoil knot:} \quad
% \begin{cases}
% x = \frac{1}{8}(\cos{t}+2\cos{2t}) + \frac{1}{2}, \\
% y = \frac{1}{8}(\sin{t}-2\sin{2t}) + \frac{1}{2}, \\
% z = \frac{1}{4} \sin{3t} + \frac{1}{2},
% \end{cases}
% \text{where}~t\in [0,2\pi].
% \end{equation*}
Let us uniformly partition the unit cube $(0,1)^3$ into cubic cells with the mesh resolution $h = 1/100$. A cell is marked as fluid if its center is in the above pipe, otherwise is marked as solid. The computational domain $\Omega$ is defined as the union of all fluid cells.
We consider a closed system, i.e., $\partial{\Omega} = \partial{\Omega}^\mathrm{wall}$. The initial order parameter field is generated by sampling numbers from a discrete uniform distribution, $c^0|_{E_i} \sim \mathrm{U}\{-1,1\}$, and the initial velocity field is taken to be zero.
We take the time step size $\tau=1\times10^{-3}$. For physical parameters, we choose $\Rey = 1$, $\Ca = 0.1$, $\Pe = 1$, $\Cn = h$, and the contact angle $\vartheta=90^\circ$ on $\partial{\Omega}$. 
\par
Figure~\ref{fig:numerical_experiments:trefoil_c} shows snapshots of the order parameter field. We employ a rainbow color scale that maps the values in $[-1,1]$ from transparent blue to non-transparent red for plotting the order parameter field. The center of the diffusive interface is colored green. We observe that the homogeneous mixture decomposes into two separate phases. With a neutral wall, i.e., the contact angle $\vartheta=90^\circ$, in the final stage of the simulation, each of the two phases occupies several disjoint sections of the domain. The interfaces are perpendicular to the solid surface. Our limiters remove overshoots and undershoots. The global mass is conserved, see the left subfigure of Figure~\ref{fig:numerical_experiments:trefoil_iter}.
\par
The middle subfigure of Figure~\ref{fig:numerical_experiments:trefoil_iter} records the number of iterations of the Douglas--Rachford algorithm on each time step. 
To measure the convergence rate, we run the Douglas--Rachford algorithm for $10^3$ iterations with a very small tolerance to approximate $\vec{y}^\ast$ and $\vec{x}^\ast$ numerically. Then we plot $\norm{\vec{y}^k-\vec{y}^\ast}{2}$ and $\norm{\vec{x}^k-\vec{x}^\ast}{2}$. 
The right subfigure of Figure~\ref{fig:numerical_experiments:trefoil_iter} shows asymptotic linear convergence rates at the selected time step $128$. We see the convergence rates match our analysis in Theorem \ref{thm-rate}. 
In addition, we check the convergence rates on all of the rest steps that match with our analysis.
\begin{figure}[htbp]
\centering
\begin{tabularx}{\linewidth}{@{}c@{~}c@{~}c@{~}c@{~}c@{}}
\includegraphics[width=0.19\textwidth]{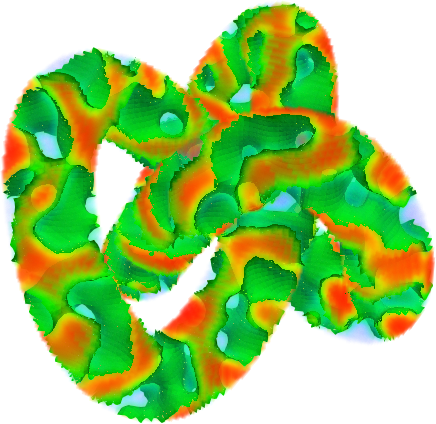} & 
\includegraphics[width=0.19\textwidth]{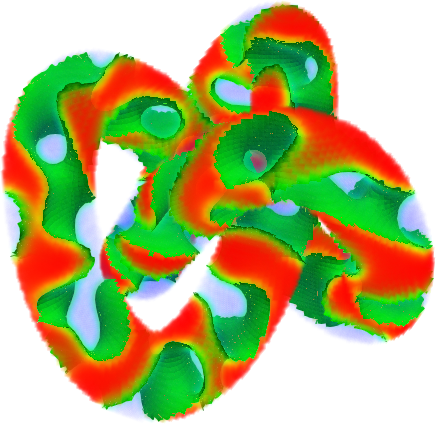} & 
\includegraphics[width=0.19\textwidth]{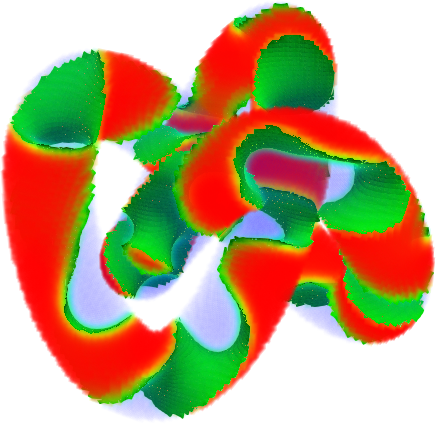} &
\includegraphics[width=0.19\textwidth]{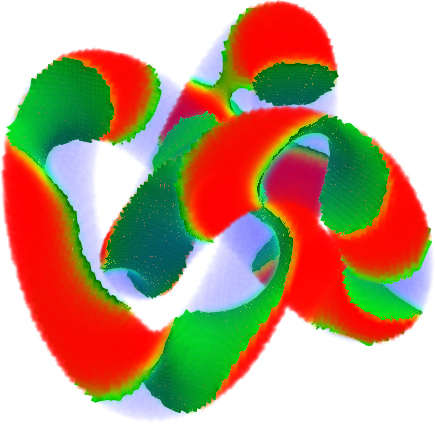} &
\includegraphics[width=0.19\textwidth]{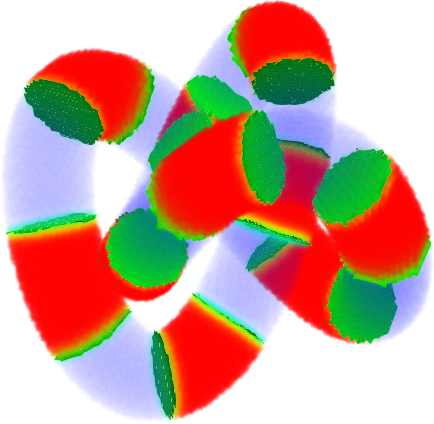} \\
\end{tabularx}
\caption{Selected snapshots at time steps $2^n$, where $n=3, 5, \cdots, 11$. 3D views of the evolution of order parameter field.}
\label{fig:numerical_experiments:trefoil_c}
\end{figure}
\begin{figure}[htbp]
\centering
\begin{tabularx}{0.9\linewidth}{@{}c@{~}c@{~}c@{}}
\includegraphics[width=0.3125\textwidth]{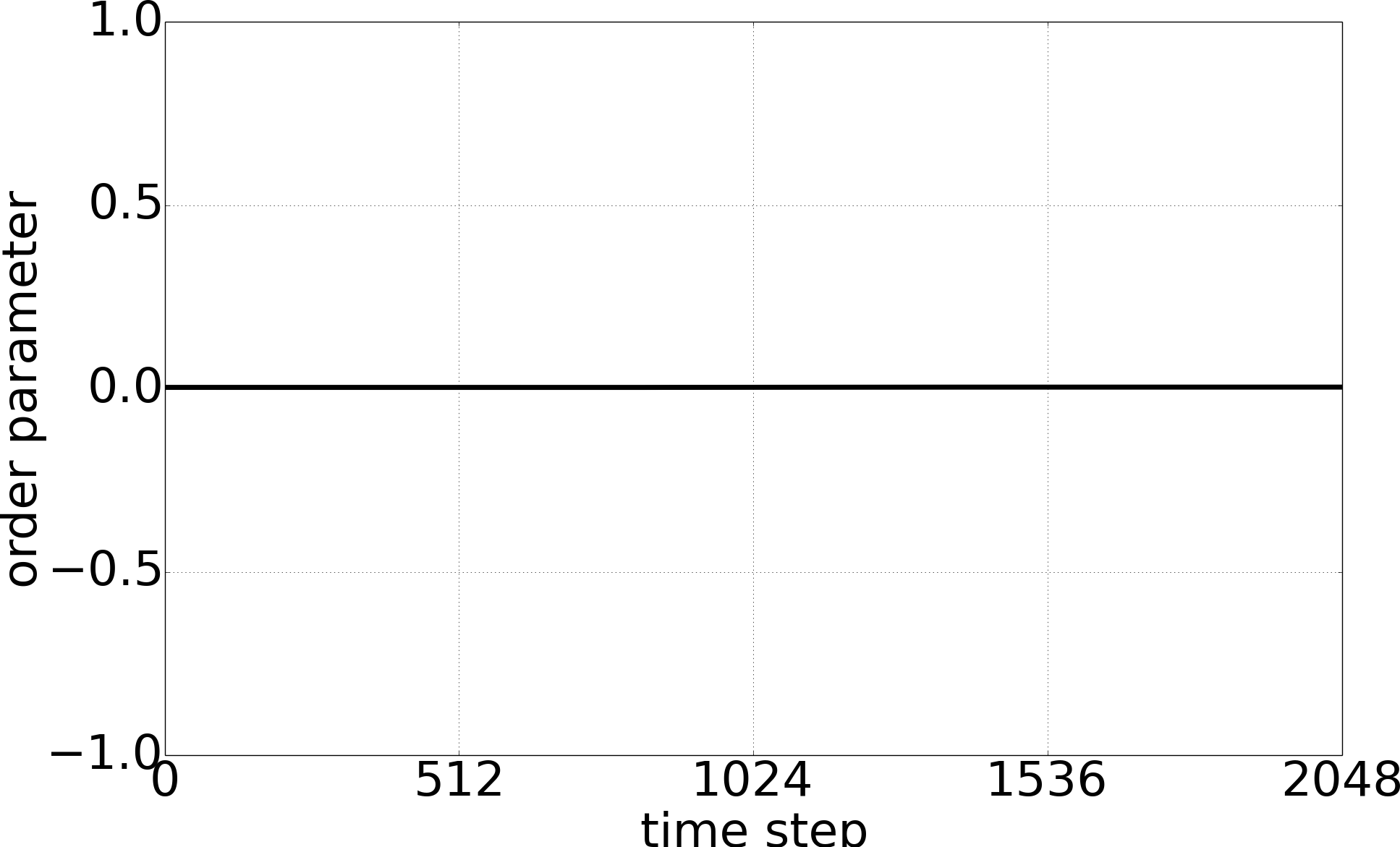} &
\includegraphics[width=0.3\textwidth]{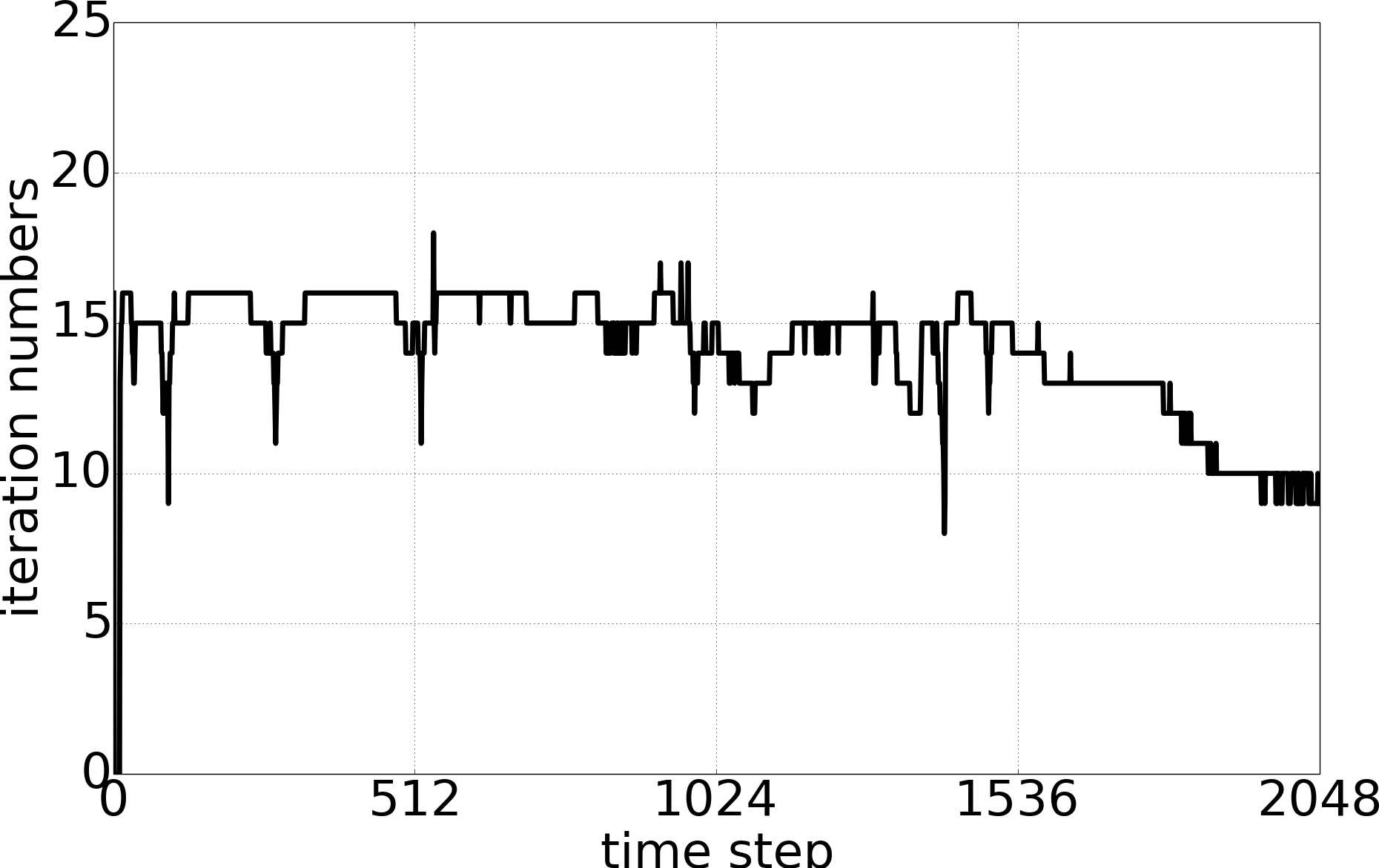} &
\includegraphics[width=0.3\textwidth]{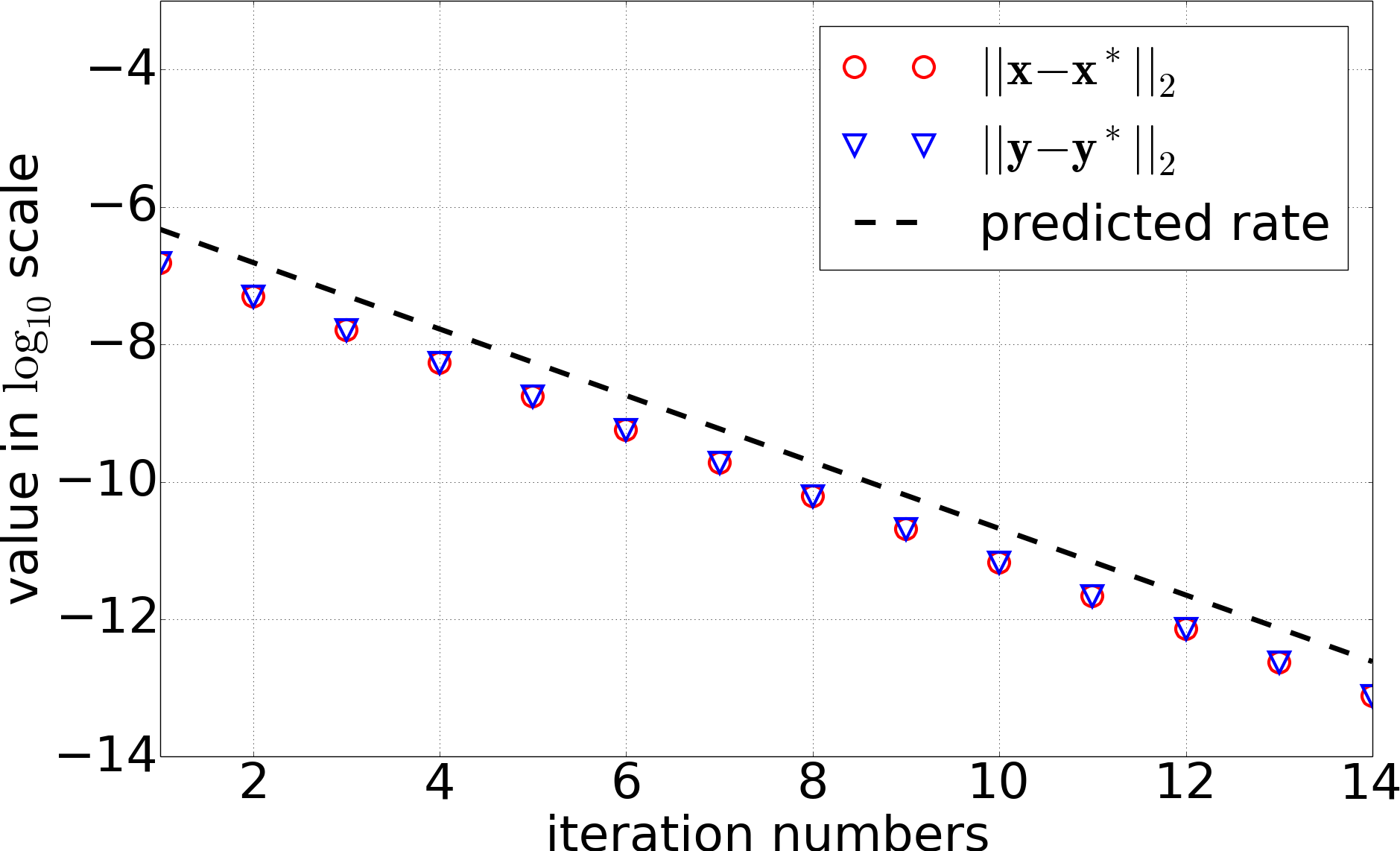} \\
\end{tabularx}
\caption{Left: the average of order parameter at each time step, which shows the conservation is preserved. Middle: the number of Douglas--Rachford iterations at each time step. Right: the asymptotic linear convergence at time step $128$. The predicted rate is the rate given in Theorem \ref{thm-rate}.}
\label{fig:numerical_experiments:trefoil_iter}
\end{figure}

\subsection{Micro structure simulations}
%This example considers a microfluidic deviceThe chemically etched borosilicate glass micromodel with a hexagonal pattern that repeats over an entire flow field forms a manufactured porous media, the microstructure.
This example involves large P\'eclet flows in a microfluidic device, making it an interesting test for validating our bound-preserving scheme in simulating advection-dominated CHNS problems.
In this part, we use the constant mobility with GL polynomial potential.
\par
The microstructure image is a set of $334\times 210\times 10$ cubic cells of resolution $h = 1/350$. Analogous to the lab experiment setup, we add a buffer of $16\times 210\times 70$ cells to the left side. The pore space together with the buffer region form our computational domain $\Omega$, see Figure~\ref{fig:numerical_experiments:micro_geometry}. 
We refer to phase $\mathrm{A}$ the bulk phase with order parameter equals to $+1$ and phase $\mathrm{B}$ the bulk phase with order parameter equals to $-1$. The buffer zone is initially filled with phase $\mathrm{A}$ and the microstructure is initially filled with phase $\mathrm{B}$, respectively. The initial velocity field is taken to be zero.
The left boundary of $\Omega$ is inflow, the right boundary of $\Omega$ is outflow, and the rest boundaries of $\Omega$ are fluid--solid interfaces. On the inflow boundary, we prescribe $\phi_\mathrm{D} = 1$, e.g., the phase $\mathrm{A}$ is injected, and
%\begin{equation*}
$\vec{v}_\mathrm{D} = \frac{10000}{9}(y-0.2)(y-0.8)(z-0.4)(z-0.6)$. 
%,$\quad(x,y,z)\in\partial{\Omega}^\mathrm{in}$.
%\end{equation*}
We the take time step size $\tau=5\times10^{-4}$. For physical parameters, we choose $\Rey = 1$, $\Ca = 1$, $\Pe = 100$, and $\Cn = h$. The microstructure surface is hydrophobic with respect to phase $\mathrm{A}$ with a contact angle $\vartheta=135^\circ$. The buffer surface and outflow boundary are neutral, namely $\vartheta=90^\circ$. 
\par
Figure~\ref{fig:numerical_experiments:micro_c} shows snapshots of the order parameter field as well as its values along the plane $\{(x,y,z)\in\Omega: z=0.5\}$ in mountain views. Similar to the previous example, we employ a rainbow color scale that maps the values in $[-1,1]$ from blue to red for plotting the order parameter field. The center of the diffusive interface is colored green. The values outside $[-1,1]$ are marked in black. We observe that phase $\mathrm{A}$ invades the microstructure while staying away from the solid surfaces due to the wettability constraint.
The top two rows correspond to the simulation without applying any limiter whereas the bottom two rows correspond to the simulation applying our two-stage limiting strategy.  
Our limiters remove overshoot and undershoot. The fluid dynamics are similar for both cases.
\par
Figure~\ref{fig:numerical_experiments:micro_DR_exact_r} shows the number of iterations of the Douglas--Rachford algorithm on each time step as well as the asymptotic linear convergence rates of selected time steps.
Here, the errors $\norm{\vec{y}^k-\vec{y}^\ast}{2}$ and $\norm{\vec{x}^k-\vec{x}^\ast}{2}$ are measured in a similar way as explained in the previous example.
%We notice a weak discrepancy between the data points in the second row of Figure~\ref{fig:numerical_experiments:micro_DR} and the predicted rates. This is due to the fact that the simple estimation criterion in Remark~\ref{rmk:estimate_r} is not accurate enough in these steps. 
A numerical way of getting an exact value of $r$ is to run the Douglas--Rachford iterations sufficiently many times with small enough tolerance and count the number of entries that stay out of the bounds in $\vec{y}^\ast$. Using the exact $r$ to compute the principal angle $\theta_{N-r}$, the numerical results match our analysis, see Figure~\ref{fig:numerical_experiments:micro_DR_exact_r}.
\begin{figure}[htbp]
\centering
\includegraphics[width=0.3\textwidth]{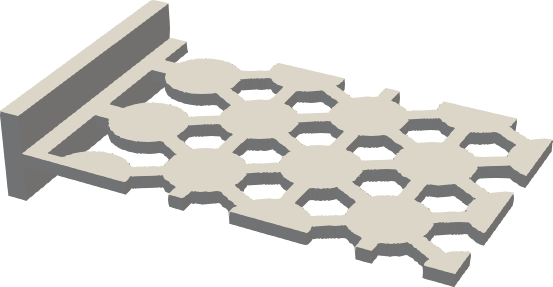}
\caption{The computational domain of the microstructure simulation.}
\label{fig:numerical_experiments:micro_geometry}
\end{figure}
\begin{figure}[htbp]
\centering
\begin{tabularx}{\linewidth}{@{}c@{~}c@{~}c@{~}c@{~}c@{}}
\includegraphics[width=0.19\textwidth]{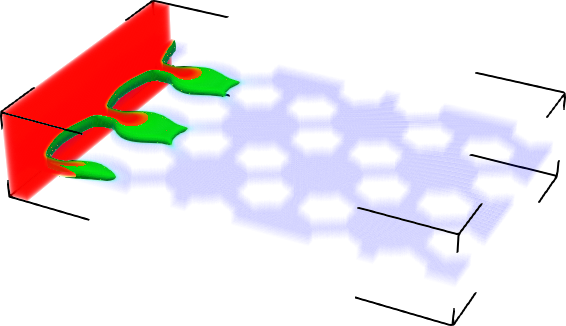} &
\includegraphics[width=0.19\textwidth]{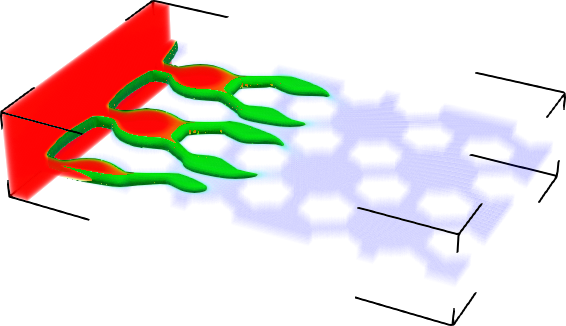} & 
\includegraphics[width=0.19\textwidth]{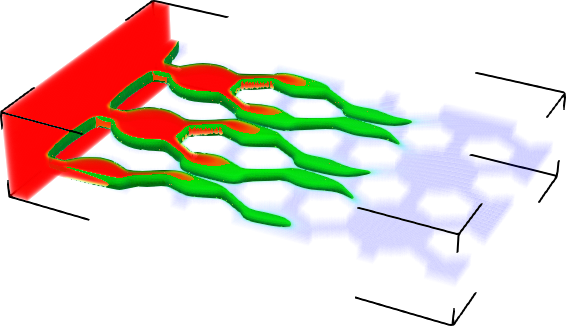} &
\includegraphics[width=0.19\textwidth]{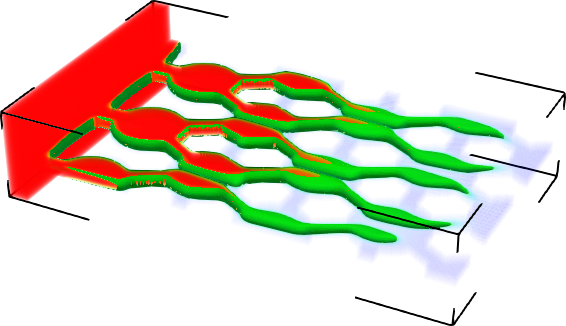} & 
\includegraphics[width=0.19\textwidth]{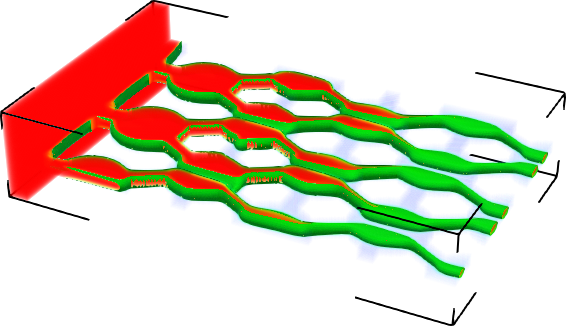} \\
\includegraphics[width=0.19\textwidth]{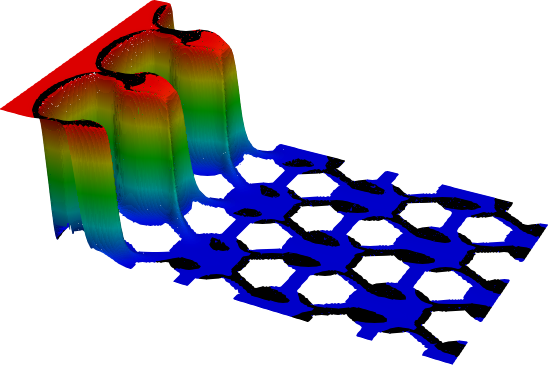} &
\includegraphics[width=0.19\textwidth]{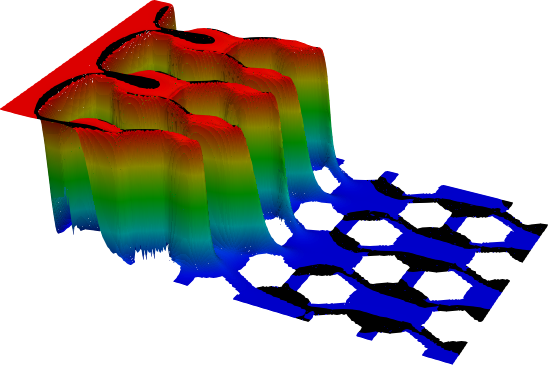} & 
\includegraphics[width=0.19\textwidth]{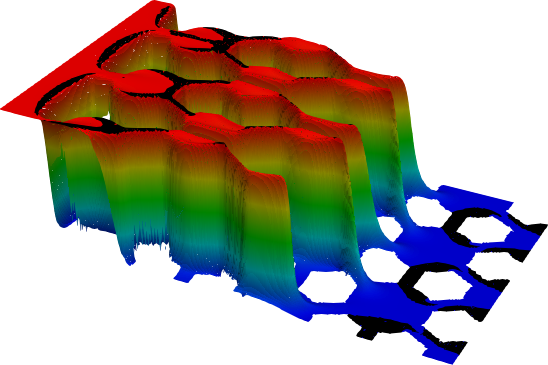} &
\includegraphics[width=0.19\textwidth]{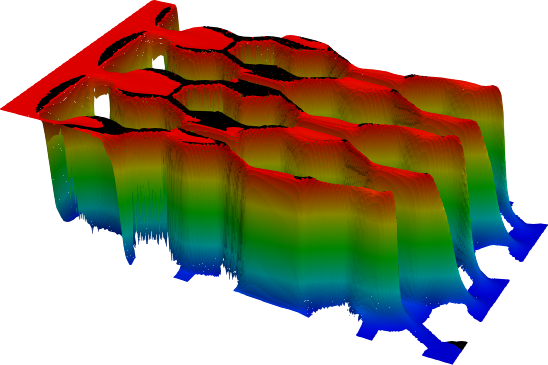} & 
\includegraphics[width=0.19\textwidth]{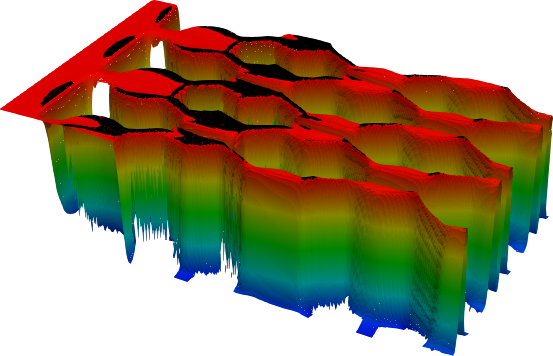} \\
\includegraphics[width=0.19\textwidth]{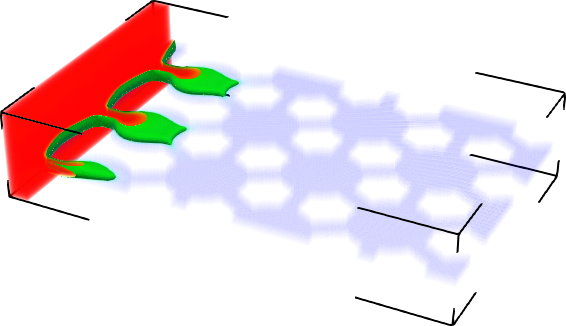} &
\includegraphics[width=0.19\textwidth]{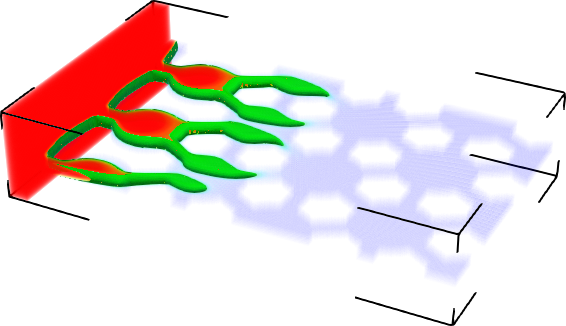} & 
\includegraphics[width=0.19\textwidth]{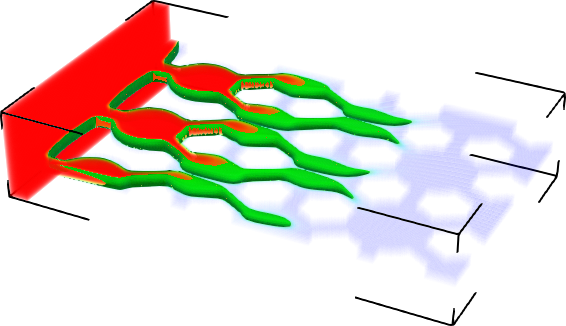} &
\includegraphics[width=0.19\textwidth]{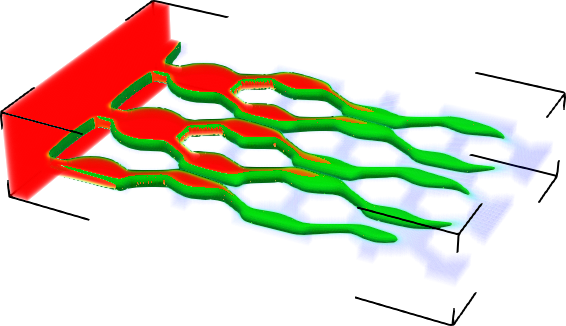} & 
\includegraphics[width=0.19\textwidth]{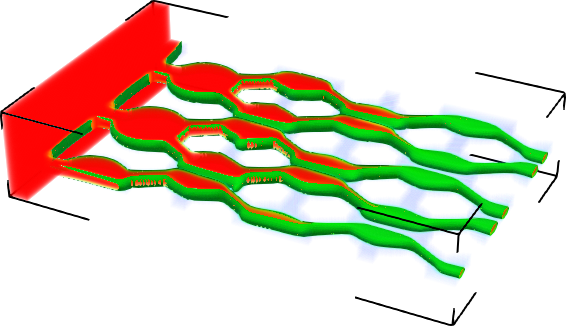} \\
\includegraphics[width=0.19\textwidth]{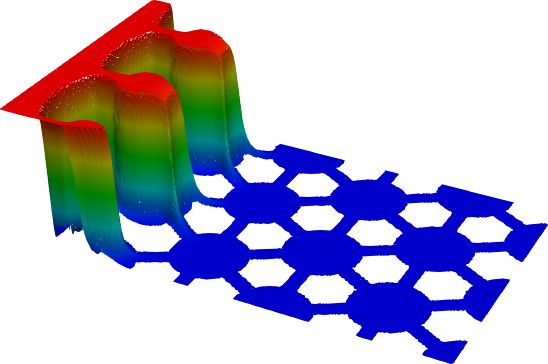} &
\includegraphics[width=0.19\textwidth]{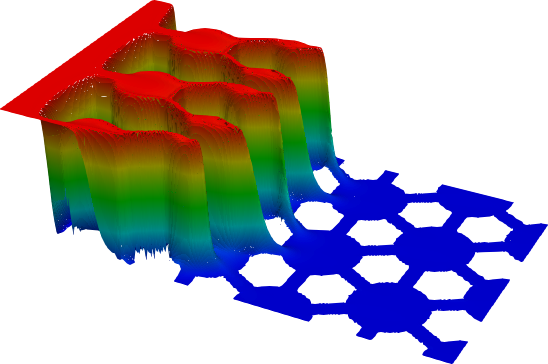} & 
\includegraphics[width=0.19\textwidth]{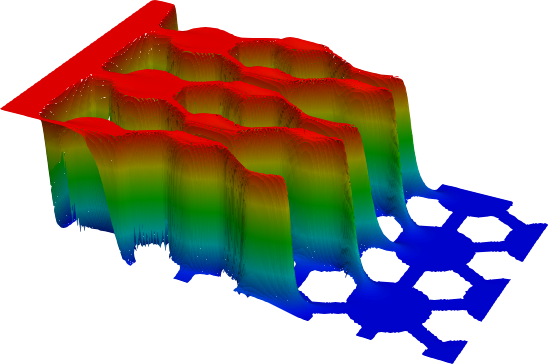} &
\includegraphics[width=0.19\textwidth]{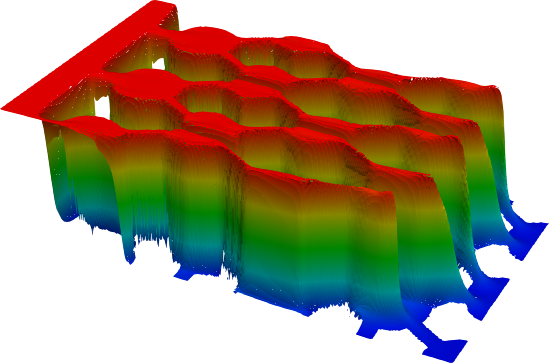} & 
\includegraphics[width=0.19\textwidth]{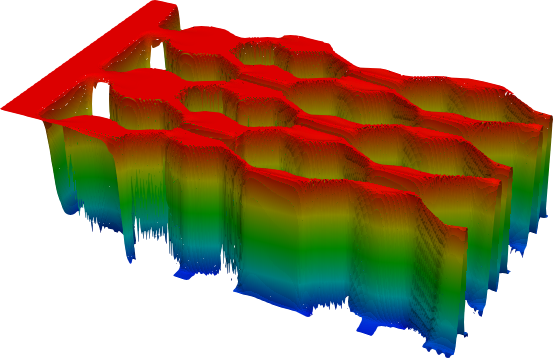} \\
\end{tabularx}
\caption{Selected snapshots at time steps $50$, $100$, $150$, $200$, and $250$. The first and third rows: 3D views of the evolution of the order parameter field. The second and fourth rows: plots of order parameter warped along the plane $\{z=0.5\}$. The top two rows are without limiters and the bottom two rows are with our limiters.}
\label{fig:numerical_experiments:micro_c}
\end{figure}
\begin{figure}[htbp]
\centering
\begin{tabularx}{0.9\linewidth}{@{}c@{~}c@{~}c@{}}
\includegraphics[width=0.3\textwidth]{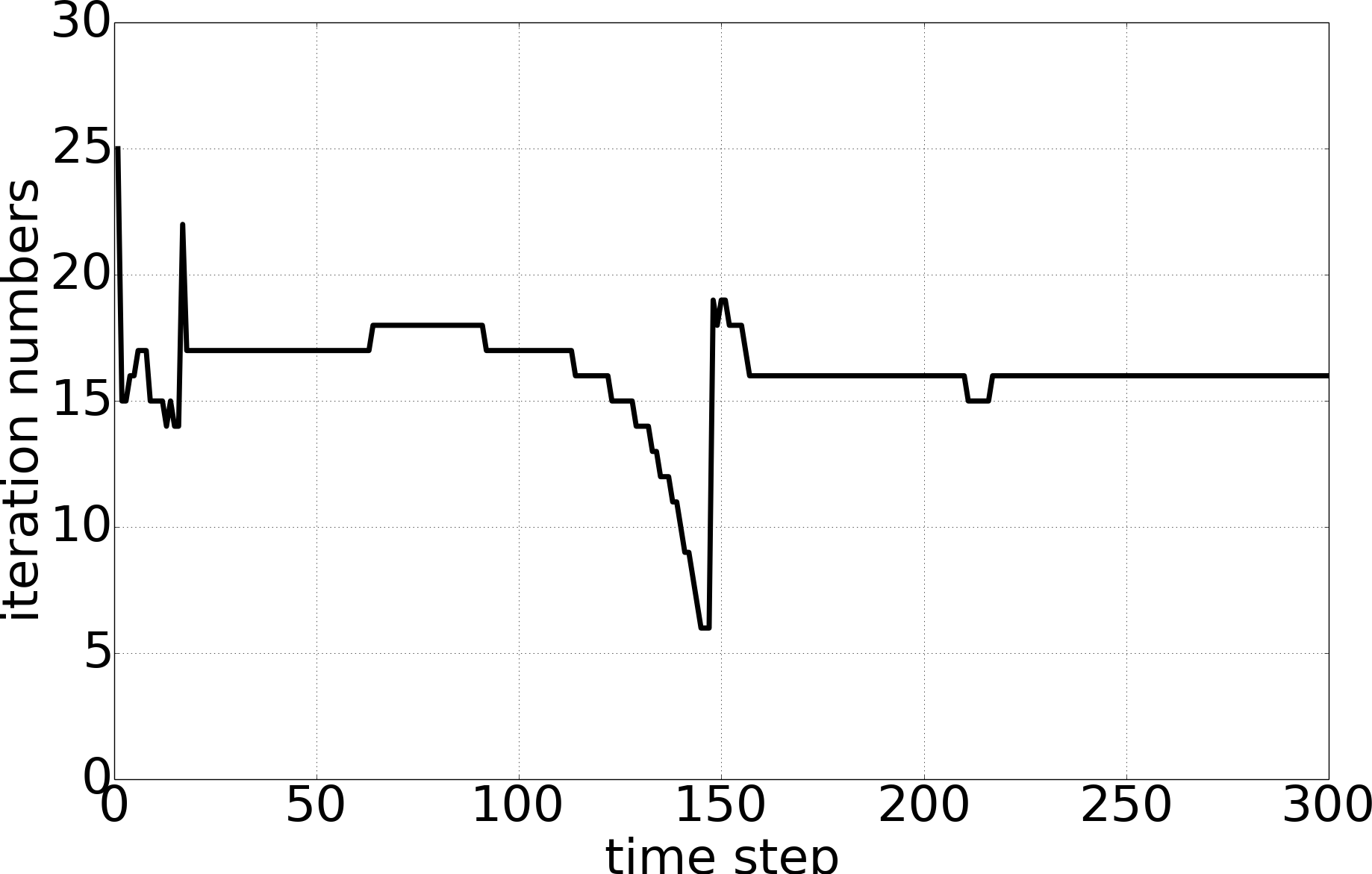} &
\includegraphics[width=0.3\textwidth]{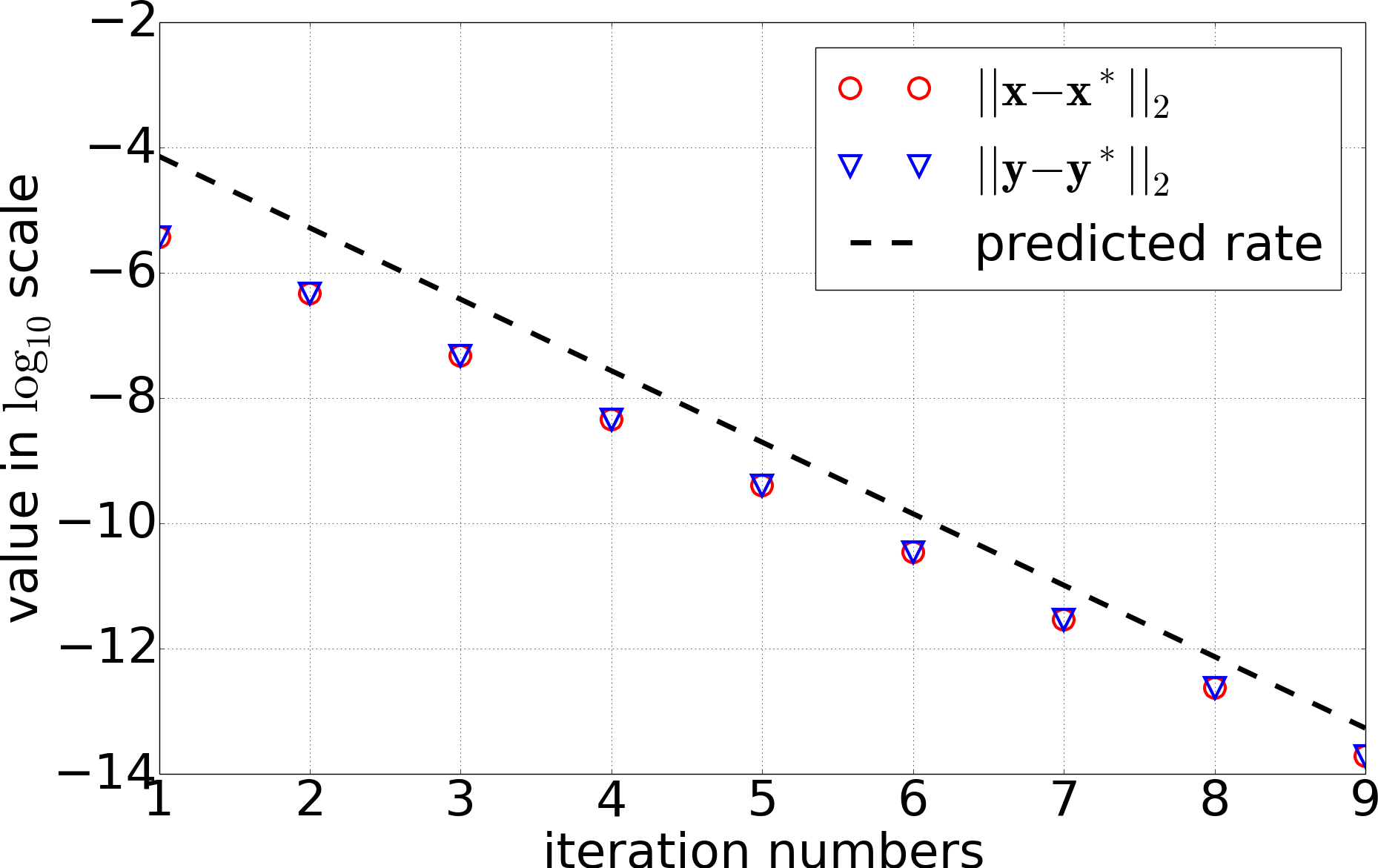} &
\includegraphics[width=0.3\textwidth]{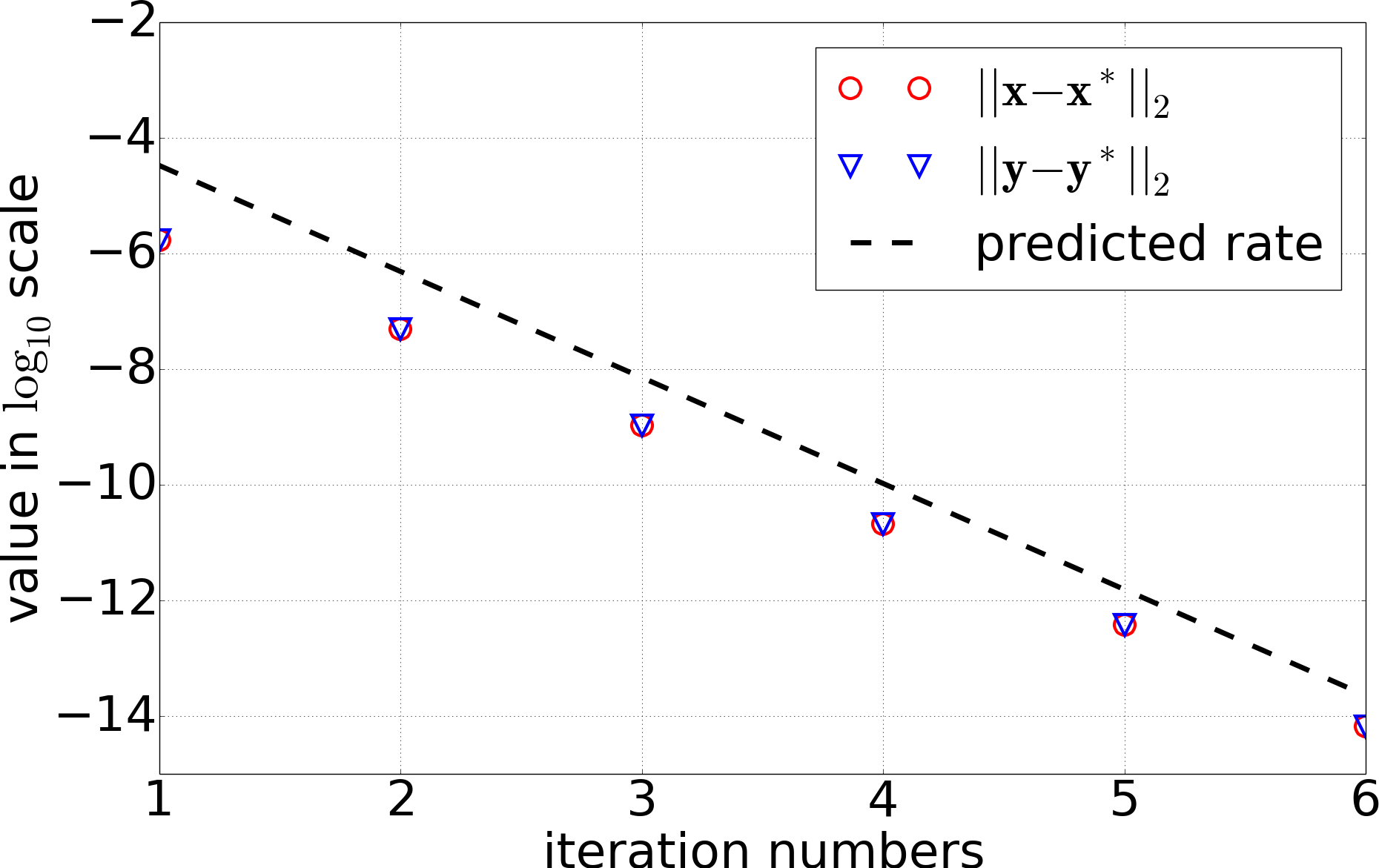} \\ 
\end{tabularx}
\caption{The left top figure shows the number of Douglas--Rachford iterations at each time step. The middle and right figures show the asymptotic linear convergence at time steps $150$ and $250$, where the principal angle $\theta_{N-r}$ is computed by using exact values of $r$.}
\label{fig:numerical_experiments:micro_DR_exact_r}
\end{figure}

\subsection{Merging droplets}
This example deals with droplets of fluid surrounded by another fluid. In a capillary-forces-dominated merging process, the large droplet wobbles several times and eventually evolves into the most thermodynamically favorable configuration, e.g., a single spherical droplet. %The simulations in this part involve a small capillary number, making it a challenging test for validating DRP simulators.
\par
Let us consider four different scenarios. In the first scenario, we use constant mobility with GL polynomial potential and we do not apply any limiter. In the rest scenarios, we apply our two-stage limiting strategy. In the second scenario, we use constant mobility with GL polynomial potential. In the third scenario, we use constant mobility with FH logarithmic potential (parameters $\alpha=0.3$ and $\beta = 1$). And in the fourth scenario, we use degenerate mobility with GL polynomial potential.
\par
Let $\Omega = (0,1)^3$ to be a closed system, $\partial{\Omega} = \partial{\Omega}^\mathrm{wall}$ and
set the initial velocity field $\vec{v}^0 = \vec{0}$. Four droplets of phase $\mathrm{A}$ are initially in a non-equilibrium configuration, surrounded by phase $\mathrm{B}$, i.e., the initial order parameter field is prescribed by 
\[
\resizebox{.99\hsize}{!}{
$\phi^{0} = 
\max\Big\{-1,\,
\tanh{\Big(\frac{r_1 - \norm{\vec{x}-\vec{a}_0}{}}{\sqrt{2}\,\Cn}\Big)},\,
\tanh{\Big(\frac{r_1 - \norm{\vec{x}-\vec{a}_1}{}}{\sqrt{2}\,\Cn}\Big)},\,
\tanh{\Big(\frac{r_2 - \norm{\vec{x}-\vec{a}_2}{}}{\sqrt{2}\,\Cn}\Big)},\,
\tanh{\Big(\frac{r_2 - \norm{\vec{x}-\vec{a}_3}{}}{\sqrt{2}\,\Cn}\Big)}
\Big\},$
}
\]
where $\vec{a}_0 = \transpose{[0.35, 0.35, 0.35]}$ and $\vec{a}_1 = \transpose{[0.65, 0.65, 0.65]}$ are the centers of the two initial larger droplets with radius $r_1 = 0.25$; and $\vec{a}_2 = \transpose{[0.75, 0.25, 0.25]}$ and $\vec{a}_3 = \transpose{[0.25, 0.75, 0.75]}$
are the centers of the two initial smaller droplets with radius $r_2 = 0.16$.
For the FH logarithmic potential, we use $0.997\phi^0$ as the initial order parameter field to make its value away from the singularity.
We uniformly partition domain $\Omega$ by cubic elements with the mesh resolution $h = 1/50$ and take the time step size $\tau = 10^{-4}$. For physical parameters, we choose $\Rey = 1$, $\Ca = 10^{-4}$, $\Pe = 1$, $\Cn = h$, and the contact angle $\vartheta = 90^\circ$ on $\partial{\Omega}$.
% \par
% In Step~1 of Algorithm CHNS, we apply convex-concave decomposition and treat the convex part $\Phi_+$ implicitly. This produces a nonlinear system and we solve it by inexact Newton method. see Section~3.4.5 in \cite{frank2018finite}. In each Newton iteration, we need to evaluate $\Phi_{+}\,\!'$ at every quadrature point, see Equation~(19) term $\mathcal{N}$ in \cite{frank2018finite}. For the FH logarithmic potential, $\Phi_{+}\,\!'(\phi) = \frac{\alpha}{2}\ln\frac{1+\phi}{1-\phi}$, which are singular at $\pm 1$. However, the intermediate vector $\vec{Y}^{s-1}$ in Equation~(20) {\color{red} BR: do we need to add all these details?  what is equation (20)?} may not bound-preserving. An ad hoc technique is to regularize $\Phi_{+}\,\!'$ to
% \begin{align*}
% %\Phi_{+}\,\!'(\phi) = 
% \begin{cases}
% \frac{\alpha}{2}(1+\ln(1+\phi)) - \frac{\alpha}{2\epsilon}(1-\phi) - \frac{\alpha}{2}\ln{\epsilon}, & \text{if}~ \phi>1-\epsilon,\\
% \frac{\alpha}{2}\ln\frac{1+\phi}{1-\phi}, & \text{if}~ \phi\in (-1+\epsilon,1-\epsilon),\\
% -\frac{\alpha}{2}(1+\ln(1-\phi)) + \frac{\alpha}{2\epsilon}(1+\phi) + \frac{\alpha}{2}\ln{\epsilon}, & \text{if}~ \phi<-1+\epsilon,
% \end{cases}
% \end{align*}
% where $\epsilon = 10^{-14}$. Notice this ad hoc treatment is only for FH logarithmic potential in nonlinear solver, since the limiting procedure in this paper is not designed for processing each intermediate stage of Newton iteration. {\color{red} BR: meaning of last sentence not clear}
\par
Figure~\ref{fig:numerical_experiments:drop_c1} shows snapshots of the order parameter field. The center of the diffusive interface is colored green and the bulk phases are colored transparent. We see the merging of the four droplets, the intermediate wobbling stages, and the final equilibrium configuration of a spherical droplet. 
We observe from Figure~\ref{fig:numerical_experiments:drop_c1} that the  fluid dynamics are visually similar in these scenarios. However, there are visible differences in certain one dimensional profiles, see Figure~\ref{fig:numerical_experiments:drop_c2}  for the order parameters at the line  $\{(x,y,z)\in \Omega: x = y = z\}$.
\par
Figure~\ref{fig:numerical_experiments:drop_c2} shows values of order parameter along the diagonal $\{(x,y,z)\in\Omega:x=y=z\}$ of the computational domain. In scenario 1, we observe bulk shift at near steady state, which is as expected since no limiters are applied. In secnarios 2 and 4, our limiters remove overshoots and undershoots. In scenario 3, the FH logarithmic potential ensures bounds without bulk shift. The cell average limiter \eqref{gDR-average} is not triggered but the Zhang--Shu limiter is triggered. 
The global mass is conserved, see the left subfigure in Figure~\ref{fig:numerical_experiments:drop_iter_and_mass}. 
\par
We plot the number of iterations of the Douglas--Rachford algorithm on each time step, see the right two subfigures in Figure~\ref{fig:numerical_experiments:drop_iter_and_mass}. 
% Figure~\ref{fig:numerical_experiments:drop_DR} shows asymptotic linear convergence rates of selected time steps associated with the snapshots displayed in Figure~\ref{fig:numerical_experiments:drop_c1}. 
We check the asymptotic linear convergence rates and they match with our analysis.
The errors $\norm{\vec{y}^k-\vec{y}^\ast}{2}$ and $\norm{\vec{x}^k-\vec{x}^\ast}{2}$ are measured in a similar way as in the previous example. %We see the rates match our analysis.
\begin{figure}[htbp]
\centering
\begin{tabularx}{0.95\linewidth}{@{}c@{~~}c@{~}c@{~}c@{~}c@{~}c@{}}
% \multirow{2}{*}{\begin{sideways}{\footnotesize constant mobility $+$ GL potential $\hspace{-1.85cm}$}\end{sideways}} &
% \includegraphics[width=0.18\textwidth]{} &
% \includegraphics[width=0.18\textwidth]{} & 
% \includegraphics[width=0.18\textwidth]{} &
% \includegraphics[width=0.18\textwidth]{} & 
% \includegraphics[width=0.18\textwidth]{} \\ &
% \includegraphics[width=0.18\textwidth]{} &
% \includegraphics[width=0.18\textwidth]{} &
% \includegraphics[width=0.18\textwidth]{} &
% \includegraphics[width=0.18\textwidth]{} &
% \includegraphics[width=0.18\textwidth]{} \\
% \midrule
\multirow{2}{*}{\begin{sideways}{\footnotesize constant mobility $+$ GL potential $\hspace{-1.85cm}$}\end{sideways}} &
\includegraphics[width=0.17\textwidth]{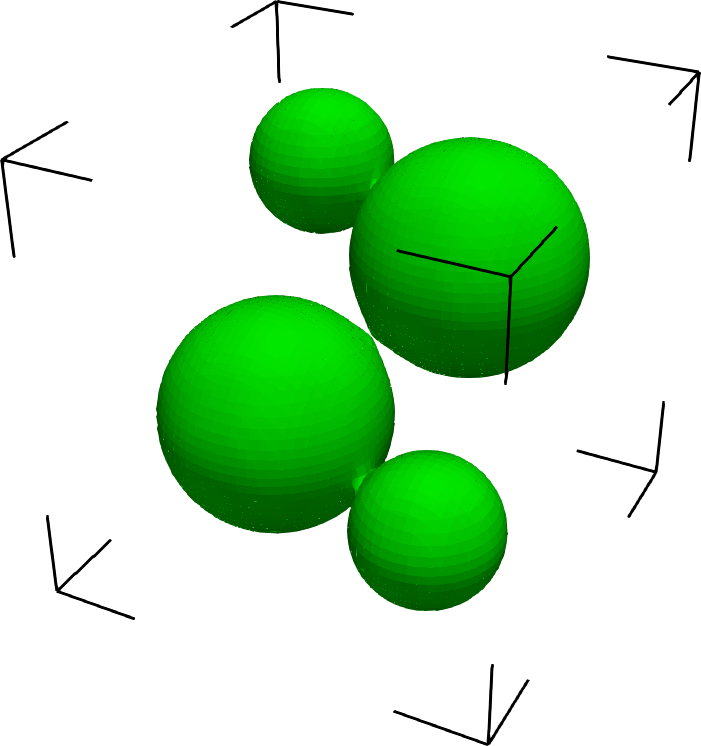} &
\includegraphics[width=0.17\textwidth]{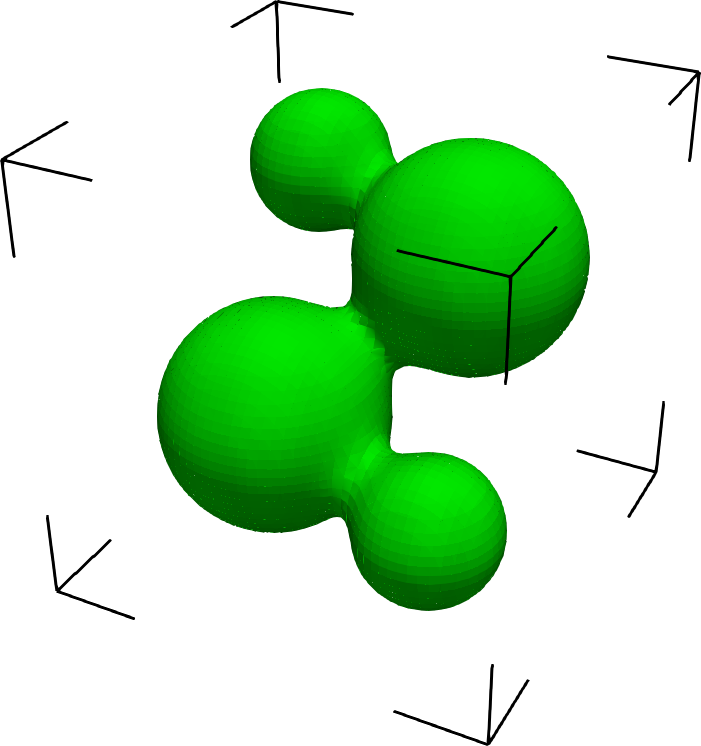} & 
\includegraphics[width=0.17\textwidth]{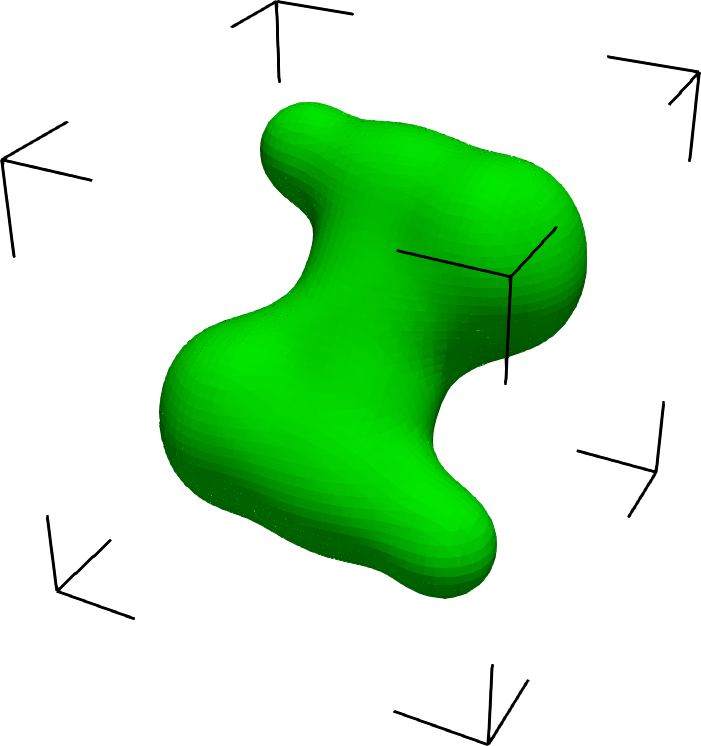} &
\includegraphics[width=0.17\textwidth]{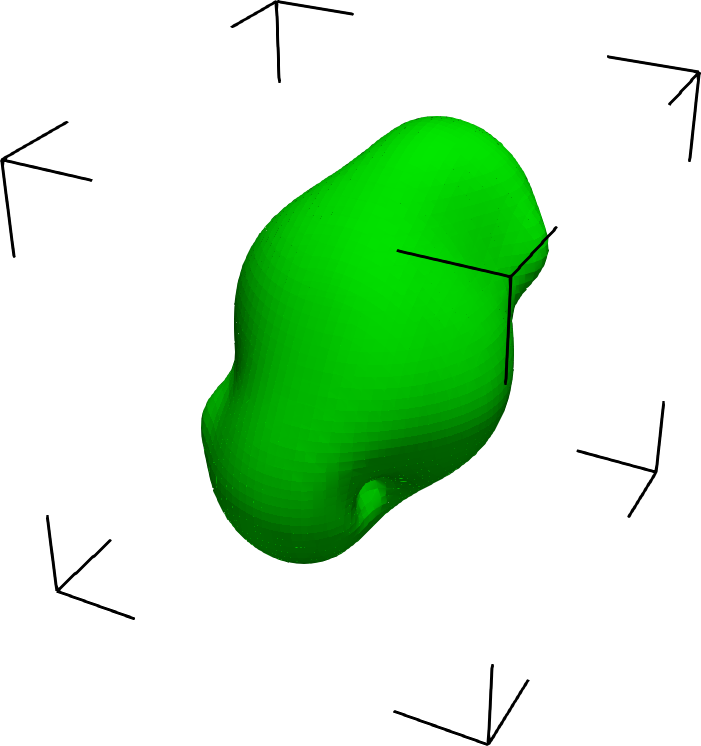} & 
\includegraphics[width=0.17\textwidth]{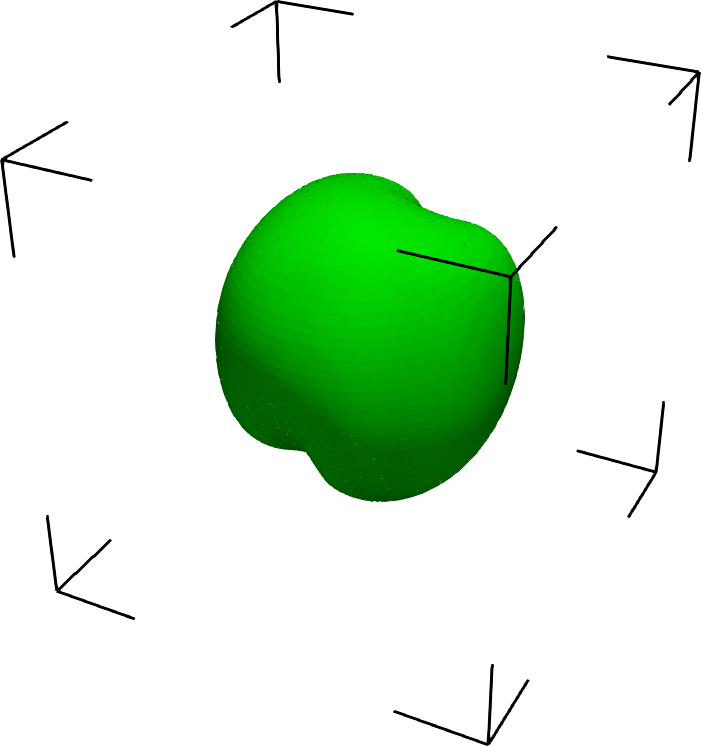} \\ &
\includegraphics[width=0.17\textwidth]{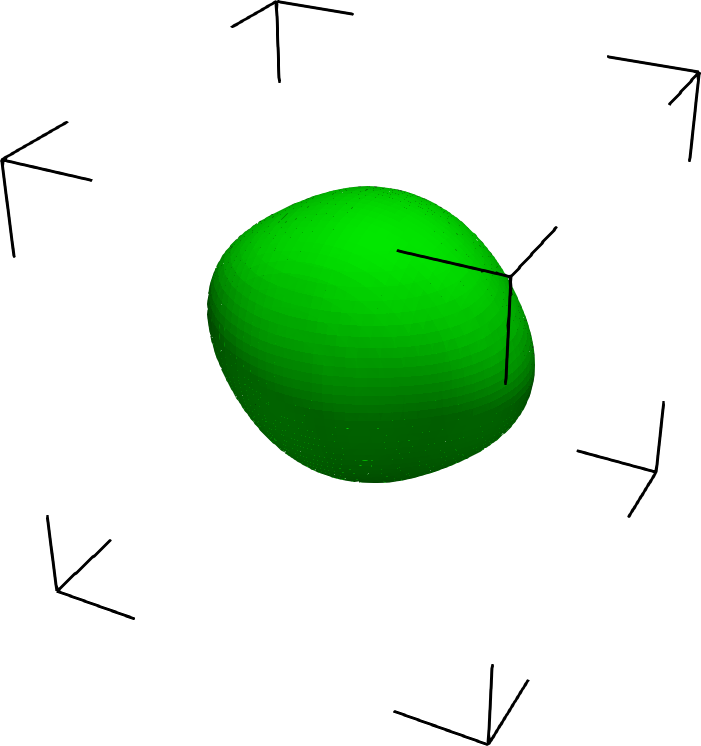} &
\includegraphics[width=0.17\textwidth]{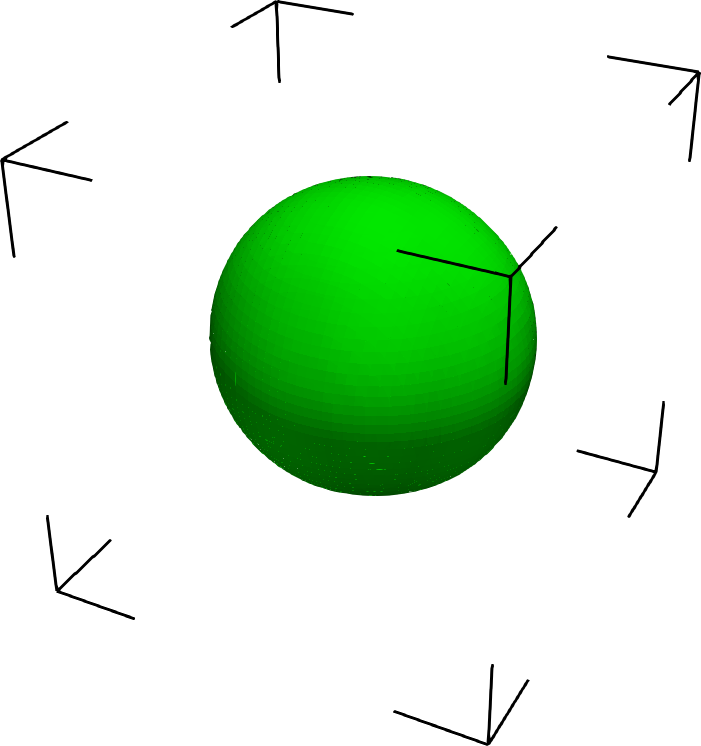} &
\includegraphics[width=0.17\textwidth]{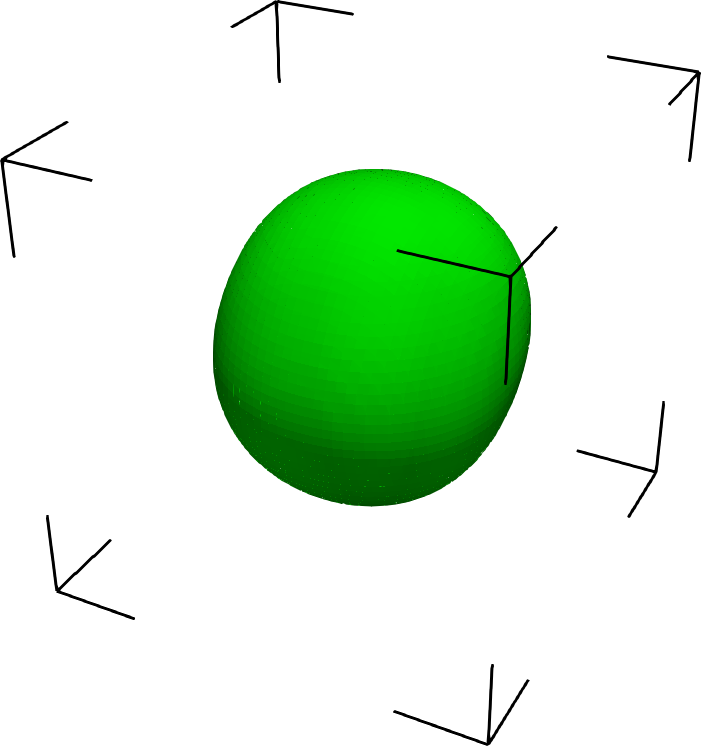} &
\includegraphics[width=0.17\textwidth]{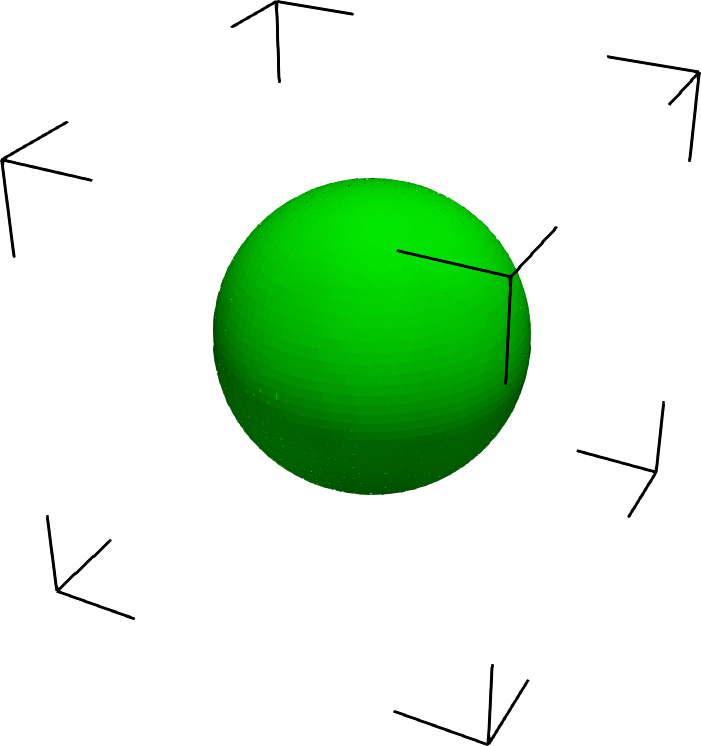} &
\includegraphics[width=0.17\textwidth]{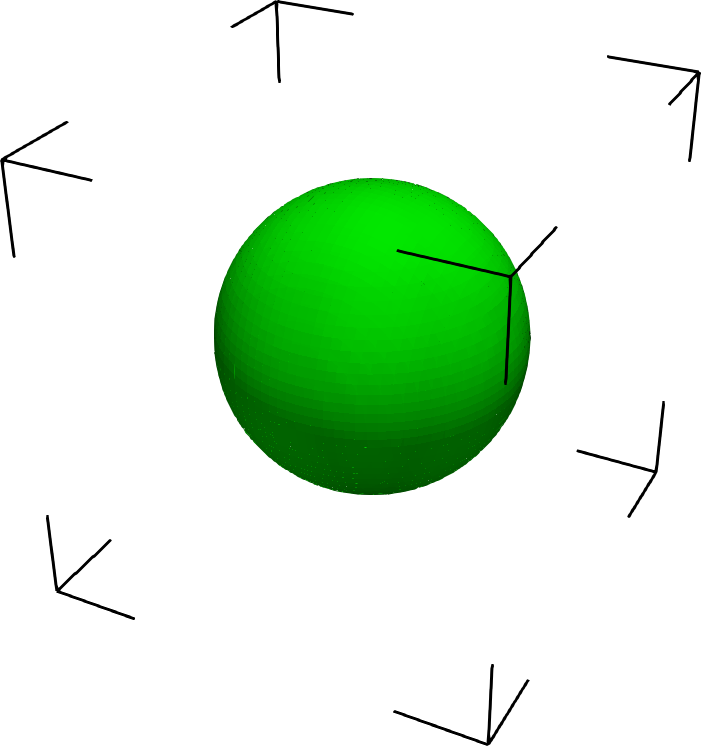} \\
\midrule
\multirow{2}{*}{\begin{sideways}{\footnotesize constant mobility $+$ FH potential $\hspace{-1.85cm}$}\end{sideways}} &
\includegraphics[width=0.17\textwidth]{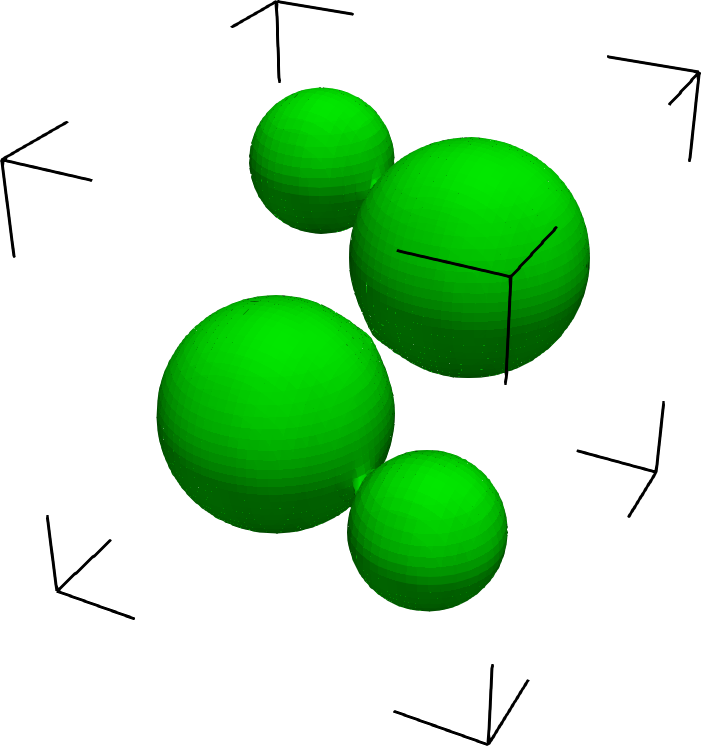} &
\includegraphics[width=0.17\textwidth]{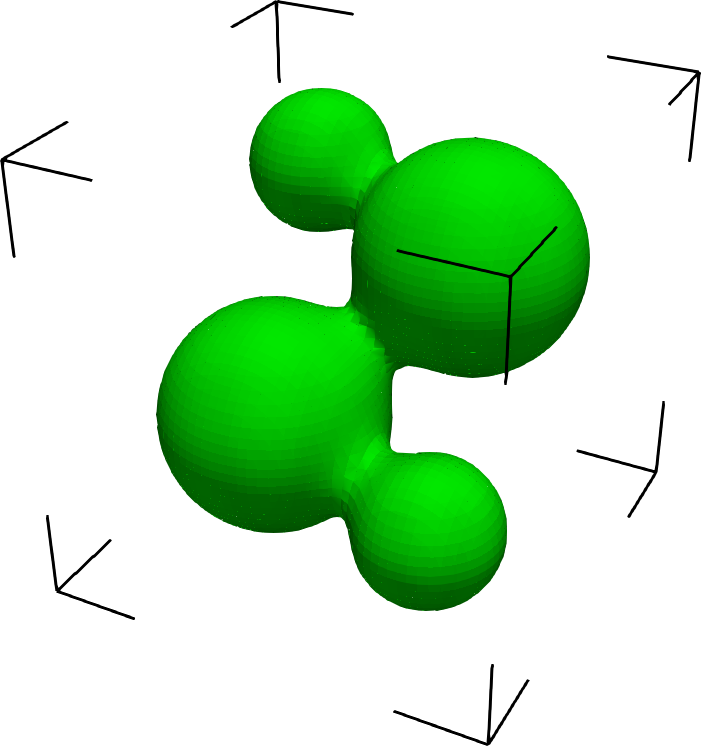} & 
\includegraphics[width=0.17\textwidth]{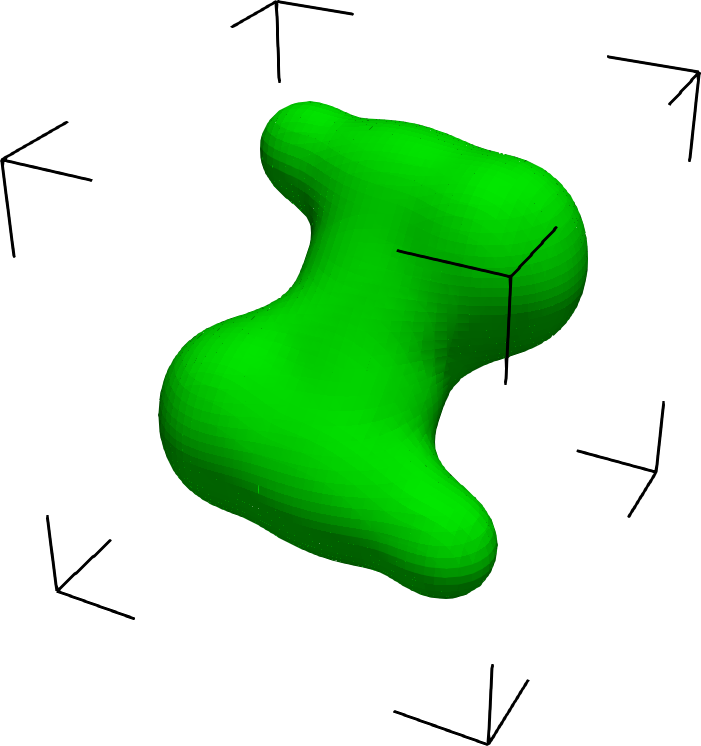} &
\includegraphics[width=0.17\textwidth]{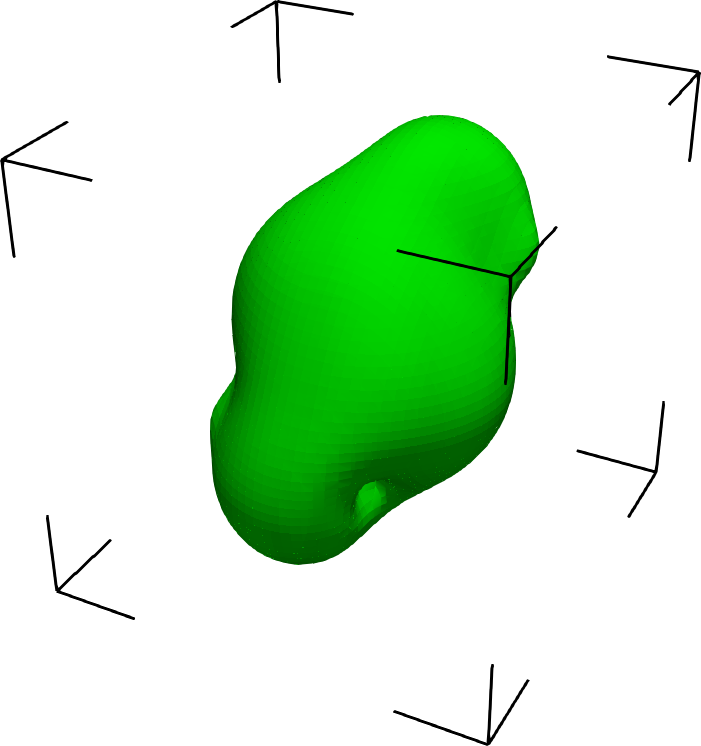} & 
\includegraphics[width=0.17\textwidth]{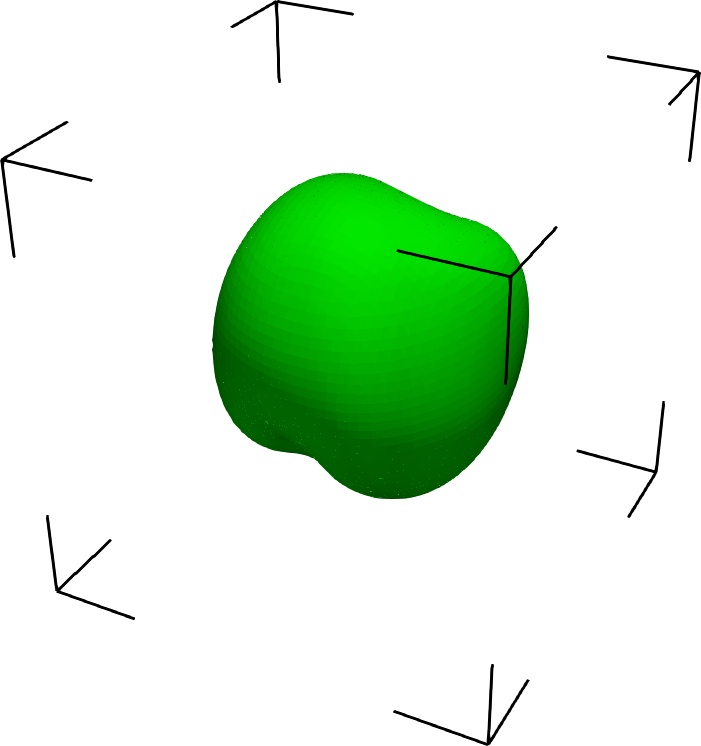} \\ &
\includegraphics[width=0.17\textwidth]{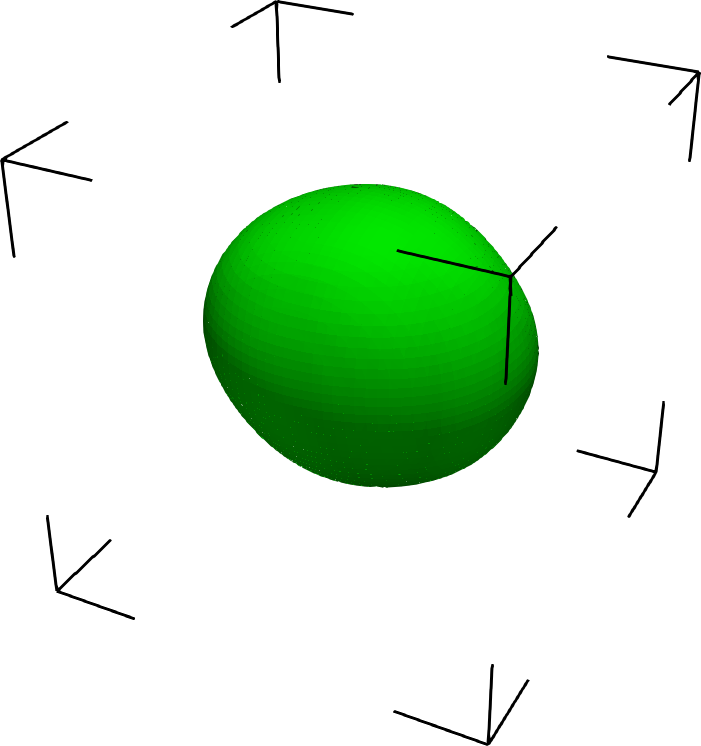} &
\includegraphics[width=0.17\textwidth]{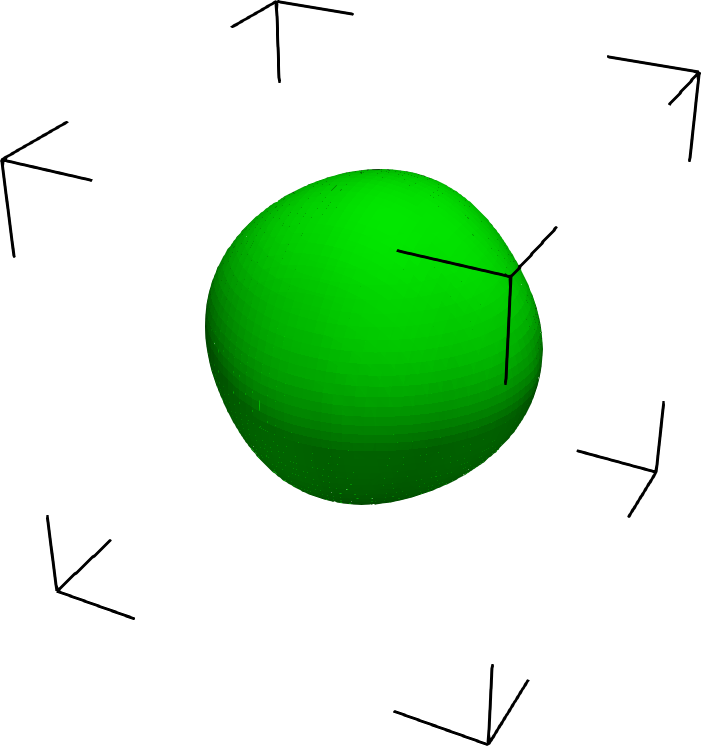} &
\includegraphics[width=0.17\textwidth]{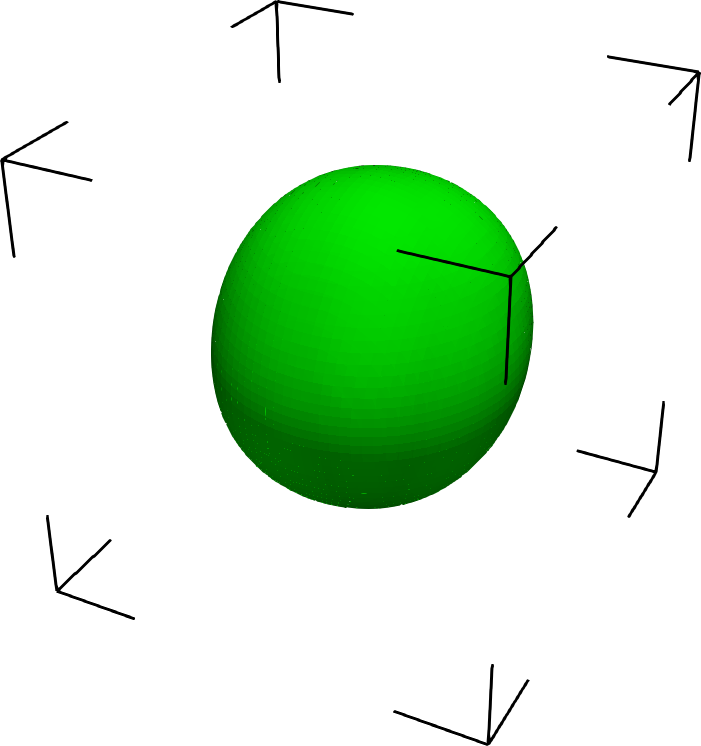} &
\includegraphics[width=0.17\textwidth]{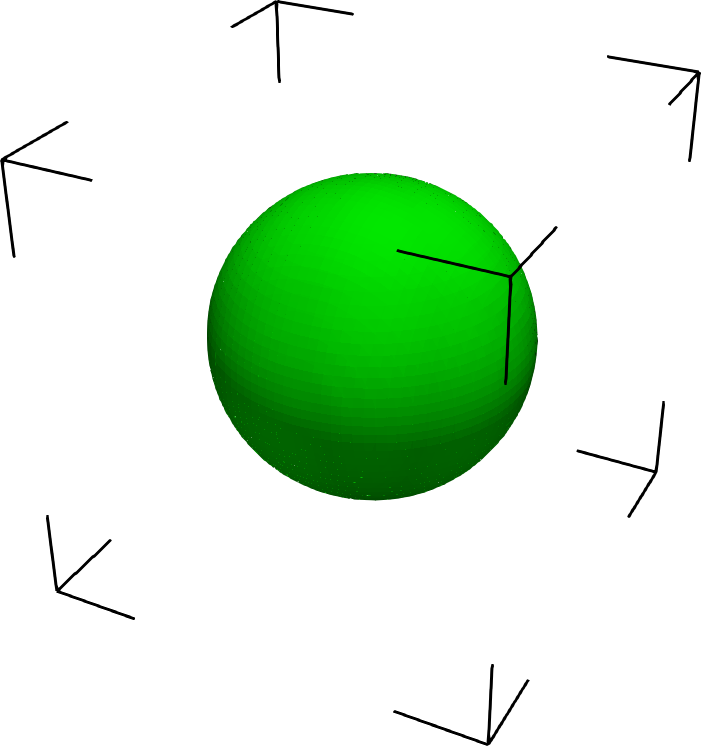} &
\includegraphics[width=0.17\textwidth]{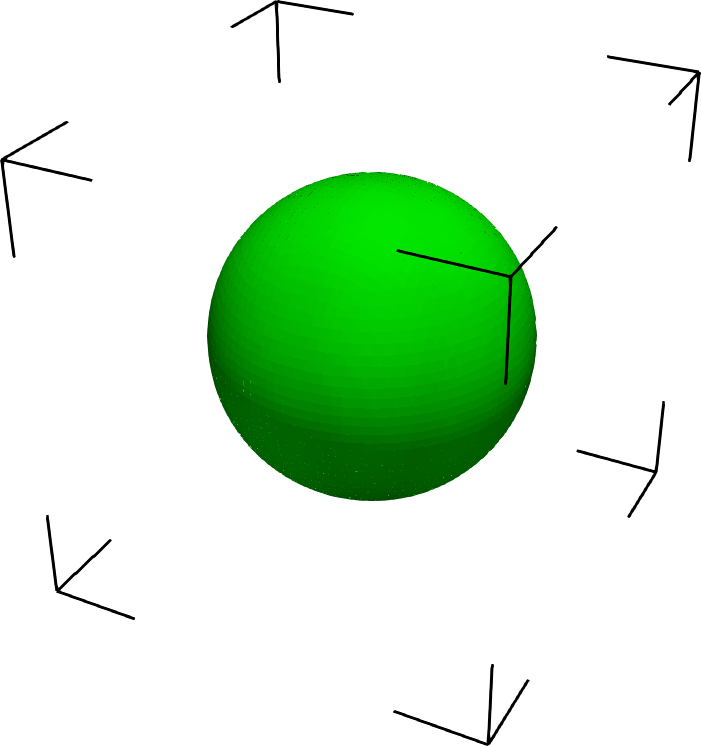} \\
\midrule
\multirow{2}{*}{\begin{sideways}{\footnotesize degenerate mobility $+$ GL potential $\hspace{-1.85cm}$}\end{sideways}} &
\includegraphics[width=0.17\textwidth]{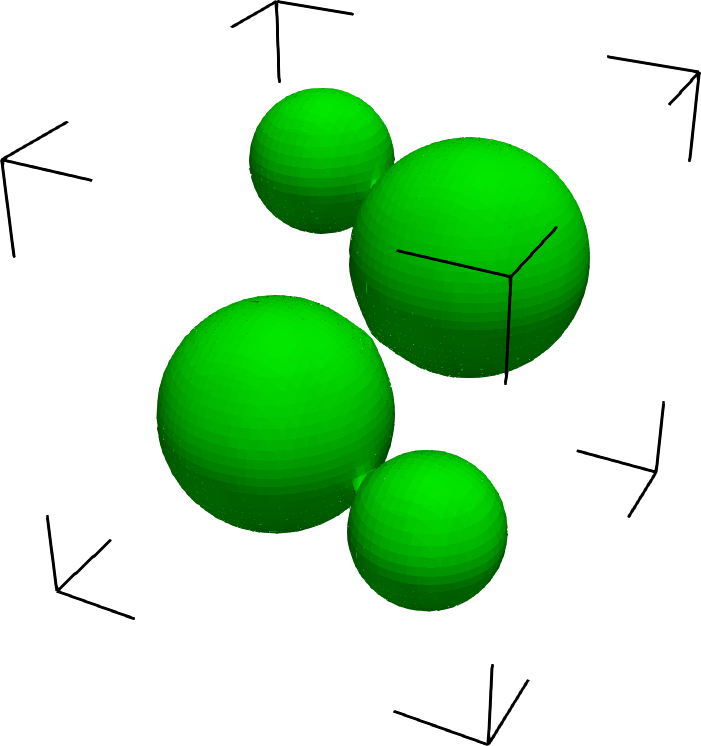} &
\includegraphics[width=0.17\textwidth]{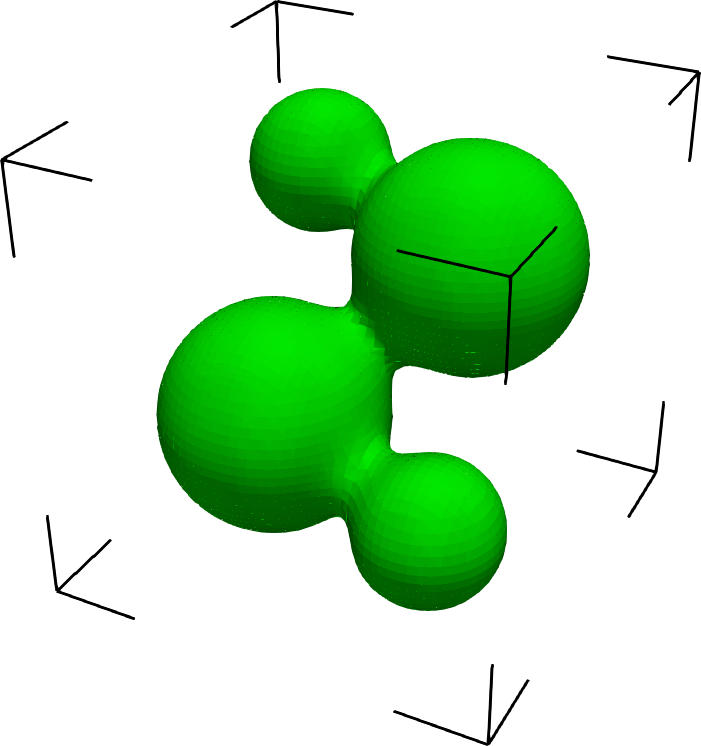} & 
\includegraphics[width=0.17\textwidth]{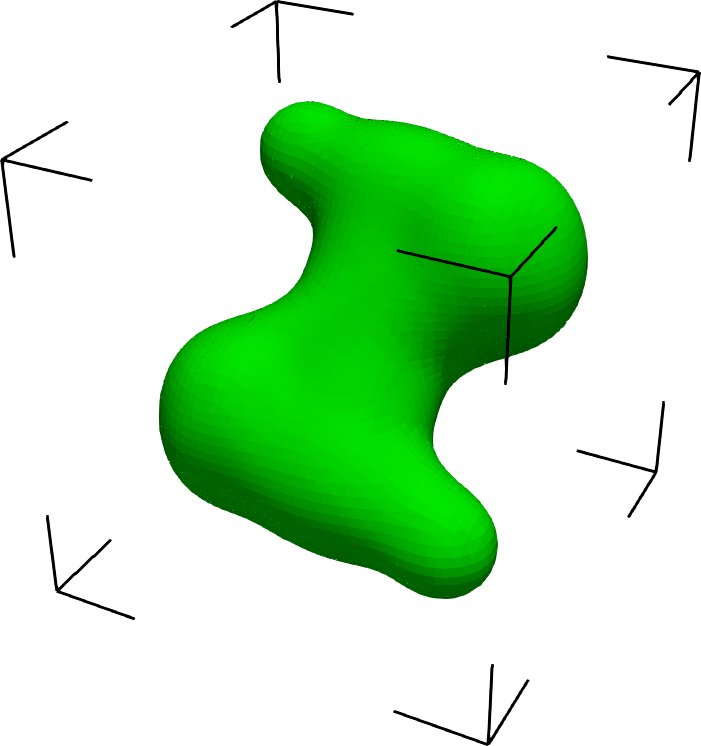} &
\includegraphics[width=0.17\textwidth]{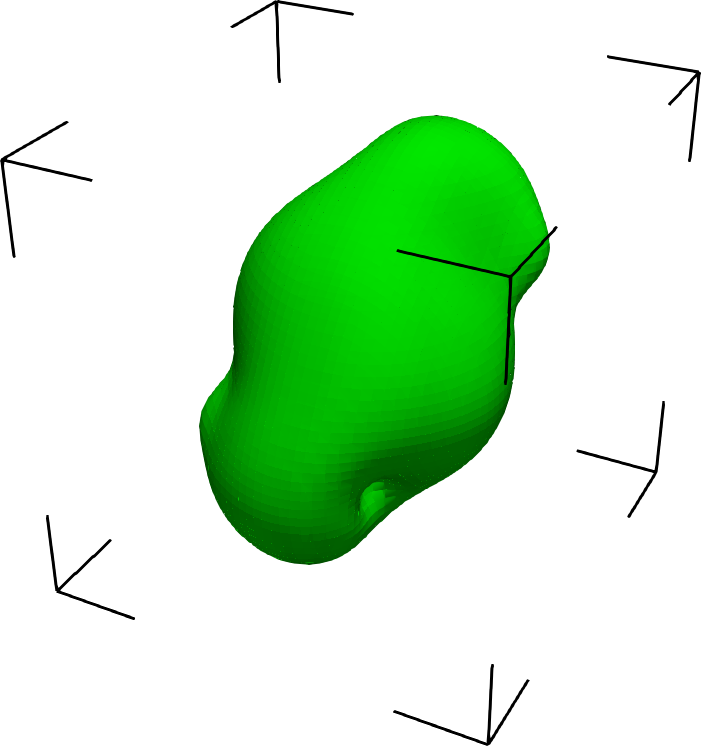} & 
\includegraphics[width=0.17\textwidth]{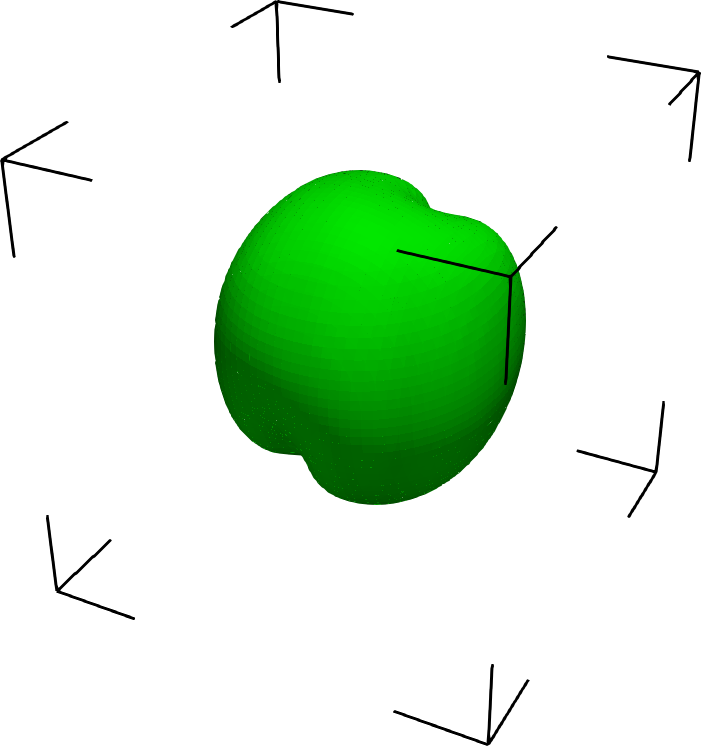} \\ &
\includegraphics[width=0.17\textwidth]{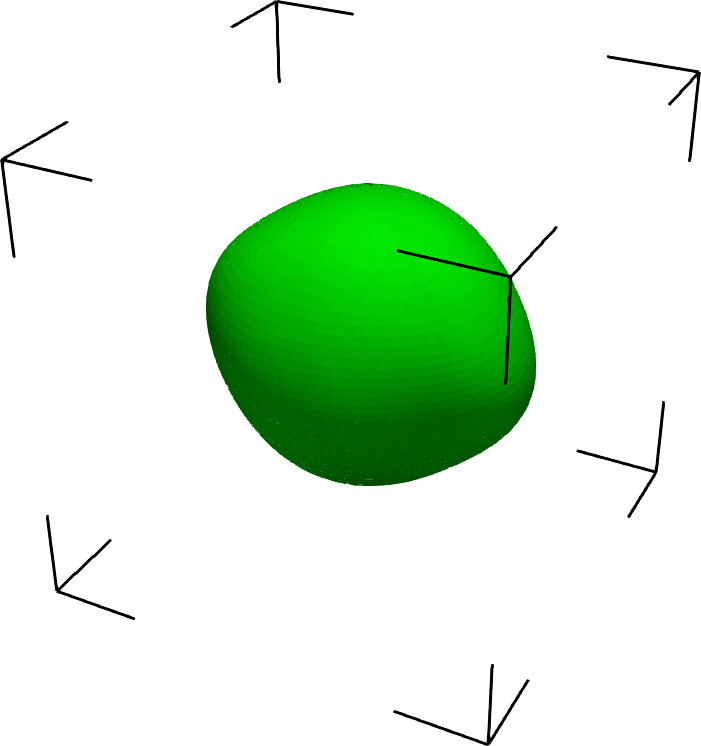} &
\includegraphics[width=0.17\textwidth]{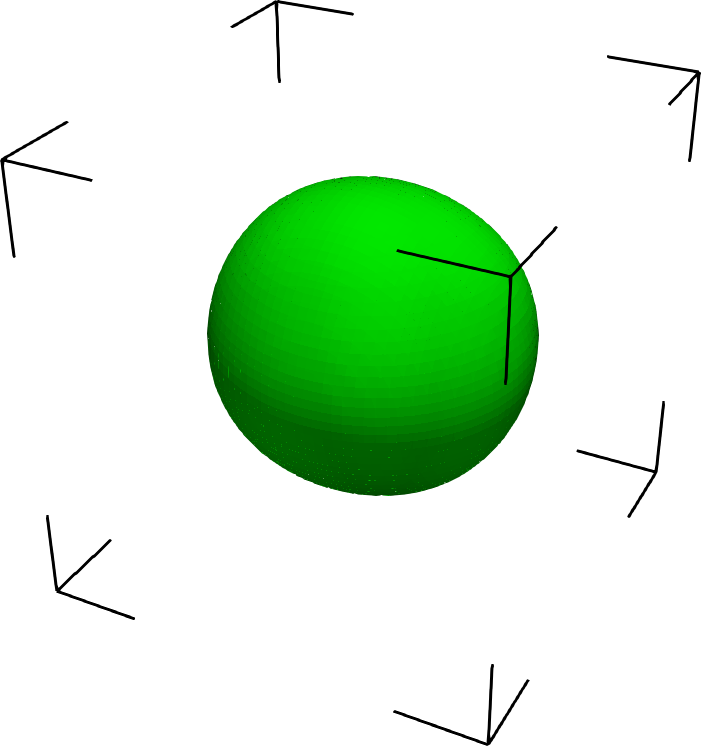} &
\includegraphics[width=0.17\textwidth]{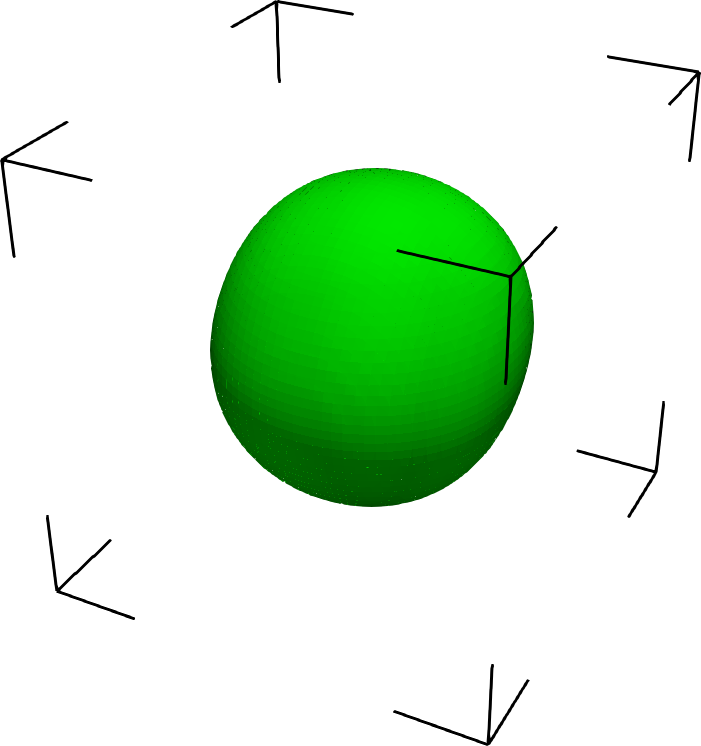} &
\includegraphics[width=0.17\textwidth]{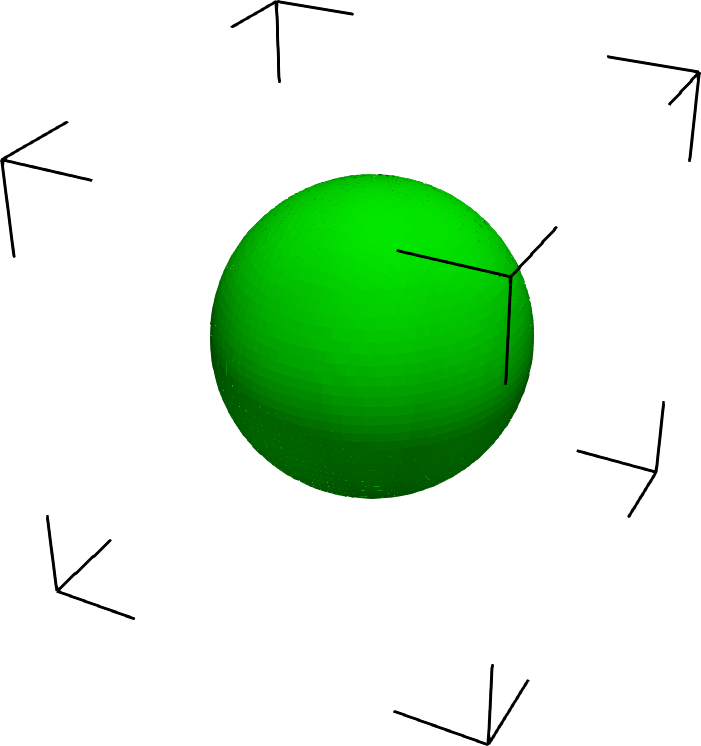} &
\includegraphics[width=0.17\textwidth]{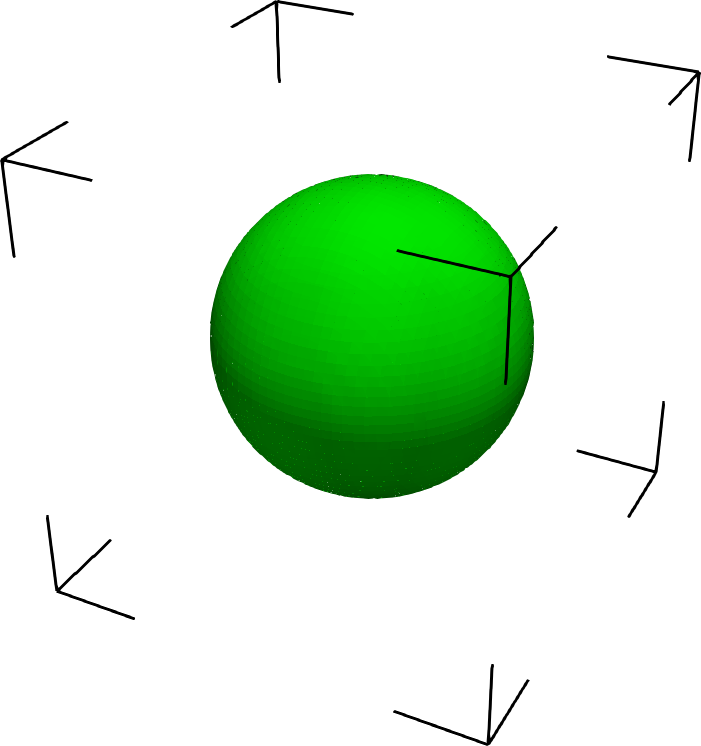} \\
\end{tabularx}
\caption{3D views of the evolution of the order parameter field. Selected snapshots at time steps $1$, $3$, $11$, $23$, $39$, $56$, $72$, $90$, $256$, and $512$. The dynamics are visually similar in these scenarios. However, there are visible differences in certain 2D profiles, see Figure~\ref{fig:numerical_experiments:drop_c2}.}
\label{fig:numerical_experiments:drop_c1}
\end{figure}
\begin{figure}[htbp]
\centering
\begin{tabularx}{0.975\linewidth}{@{~~}c@{~}c@{~}c@{~}c@{}}
\includegraphics[width=0.235\textwidth]{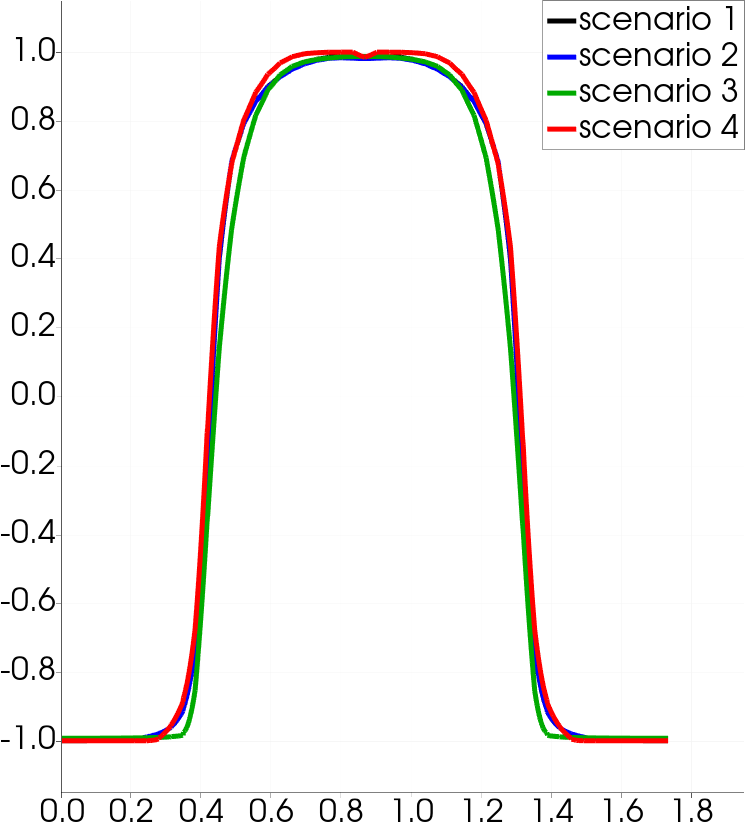} & 
\includegraphics[width=0.235\textwidth]{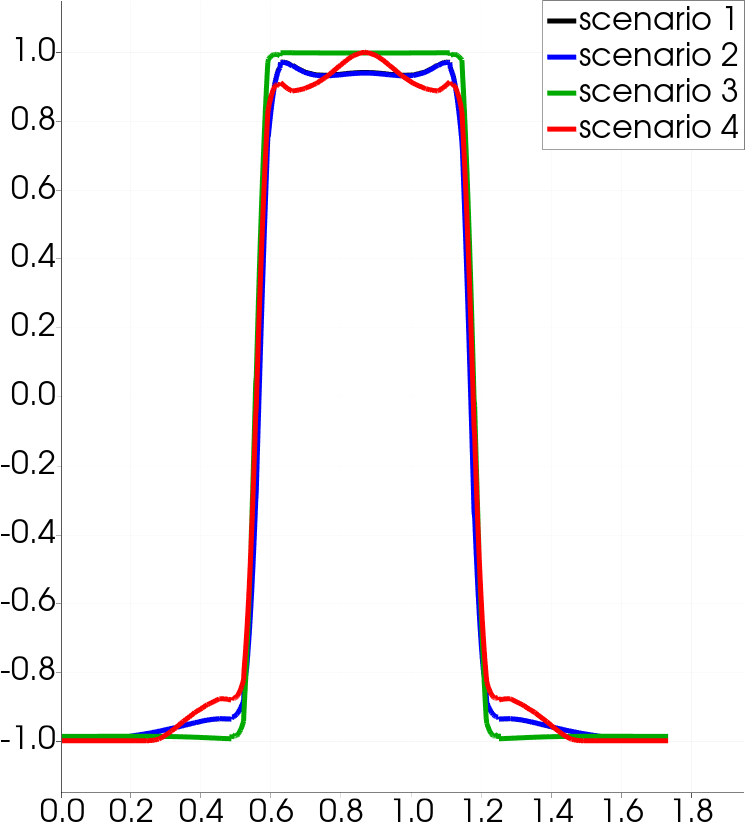} &
\includegraphics[width=0.235\textwidth]{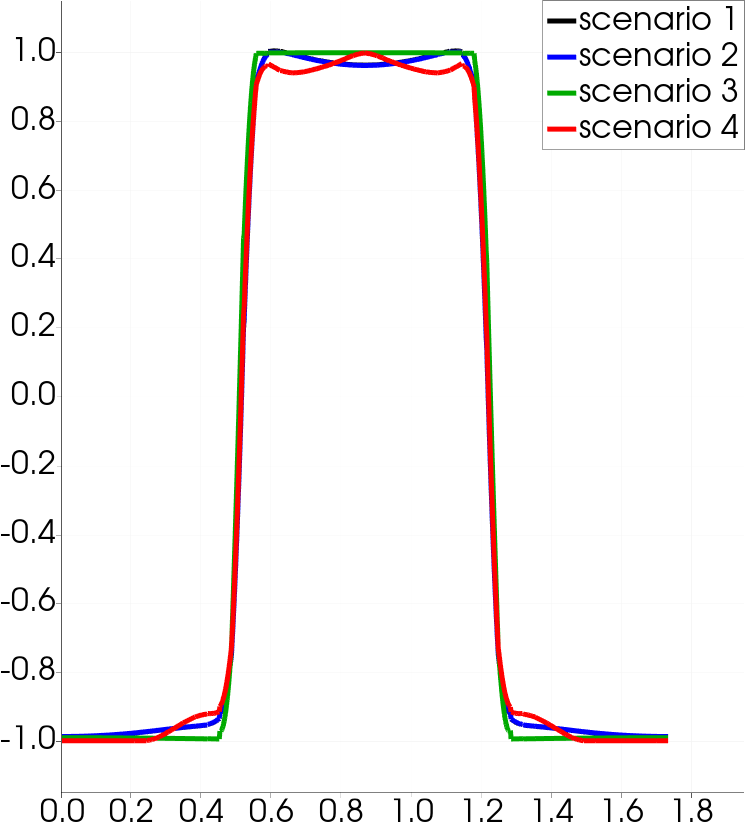} &
\includegraphics[width=0.235\textwidth]{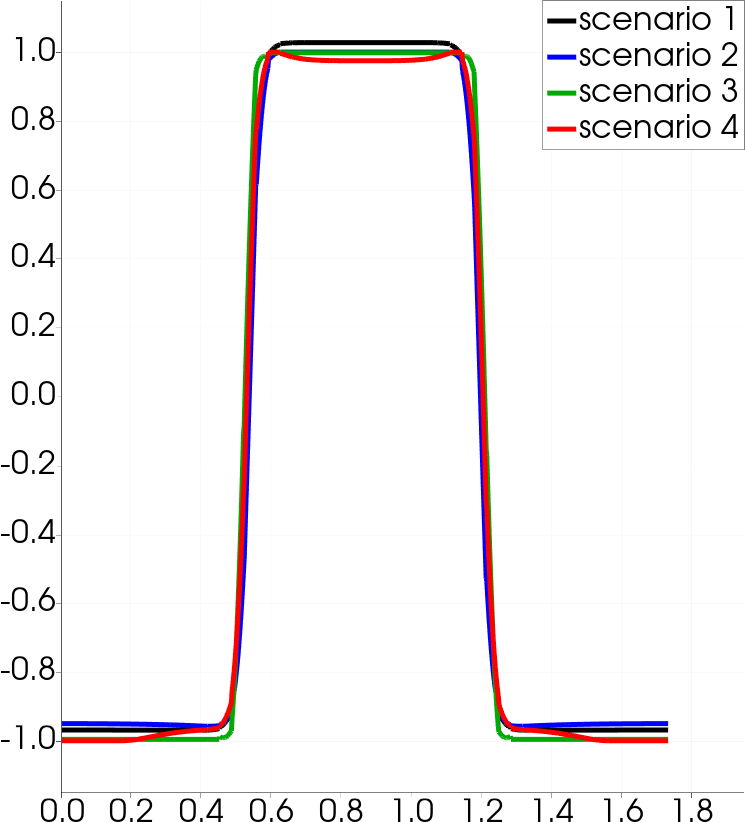} \\
\end{tabularx}
\caption{Plots of order parameter extracted along the line $\{(x,y,z)\in \Omega: x = y = z\}$. Selected snapshots at time steps $23$, $56$, $90$, and $512$. Scenario 1: constant mobility with GL polynomial potential and do not apply any limiter. The rest scenarios apply limiters. Scenario 2: constant mobility with GL polynomial potential. Scenario 3: constant mobility with FH logarithmic potential. Scenario 4: degenerate mobility with GL polynomial potential.}
\label{fig:numerical_experiments:drop_c2}
\end{figure}
\begin{figure}[htbp]
\centering
\begin{tabularx}{\linewidth}{@{}c@{~}c@{~}c@{}}
\includegraphics[width=0.35\textwidth]{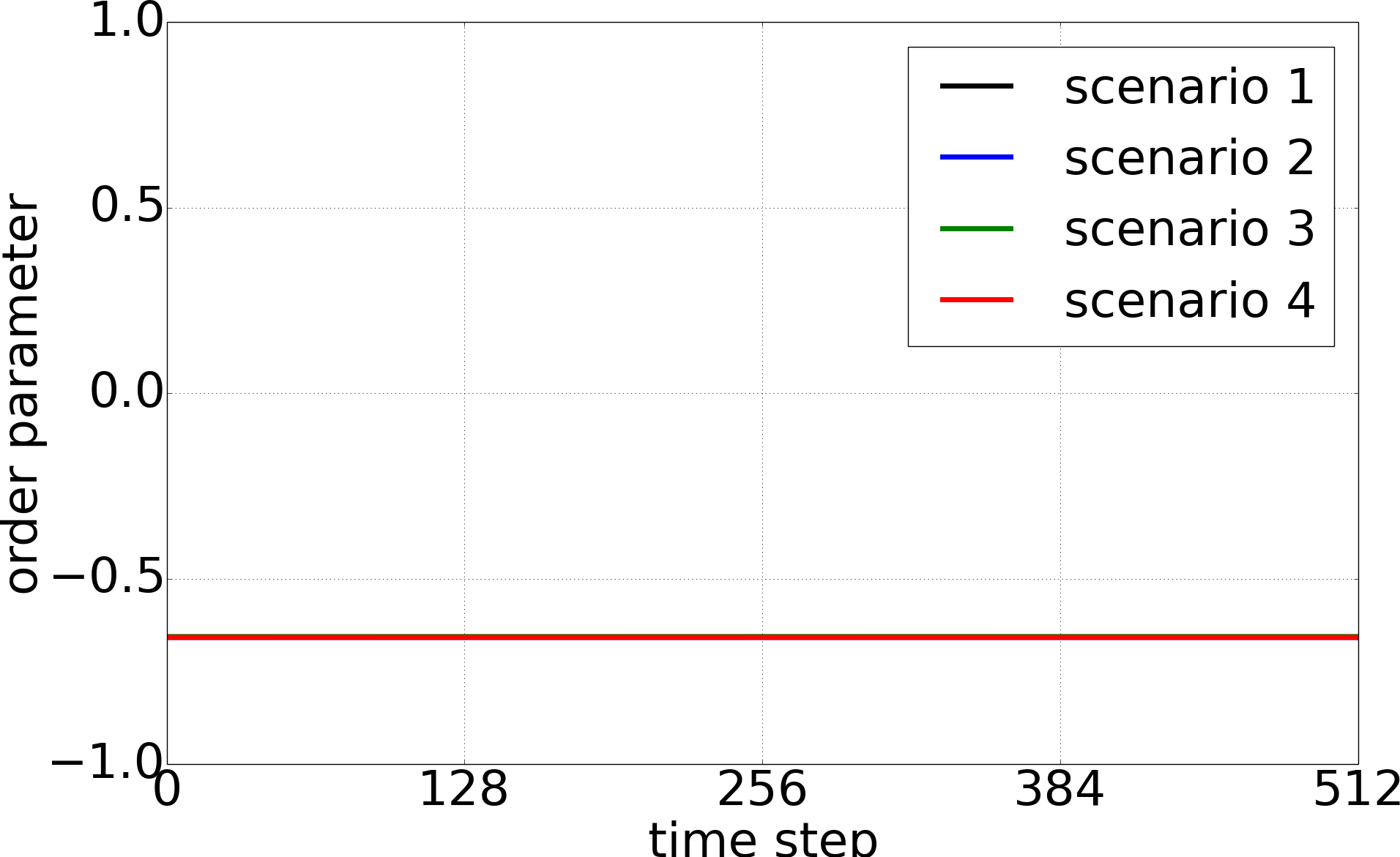} &
\includegraphics[width=0.33\textwidth]{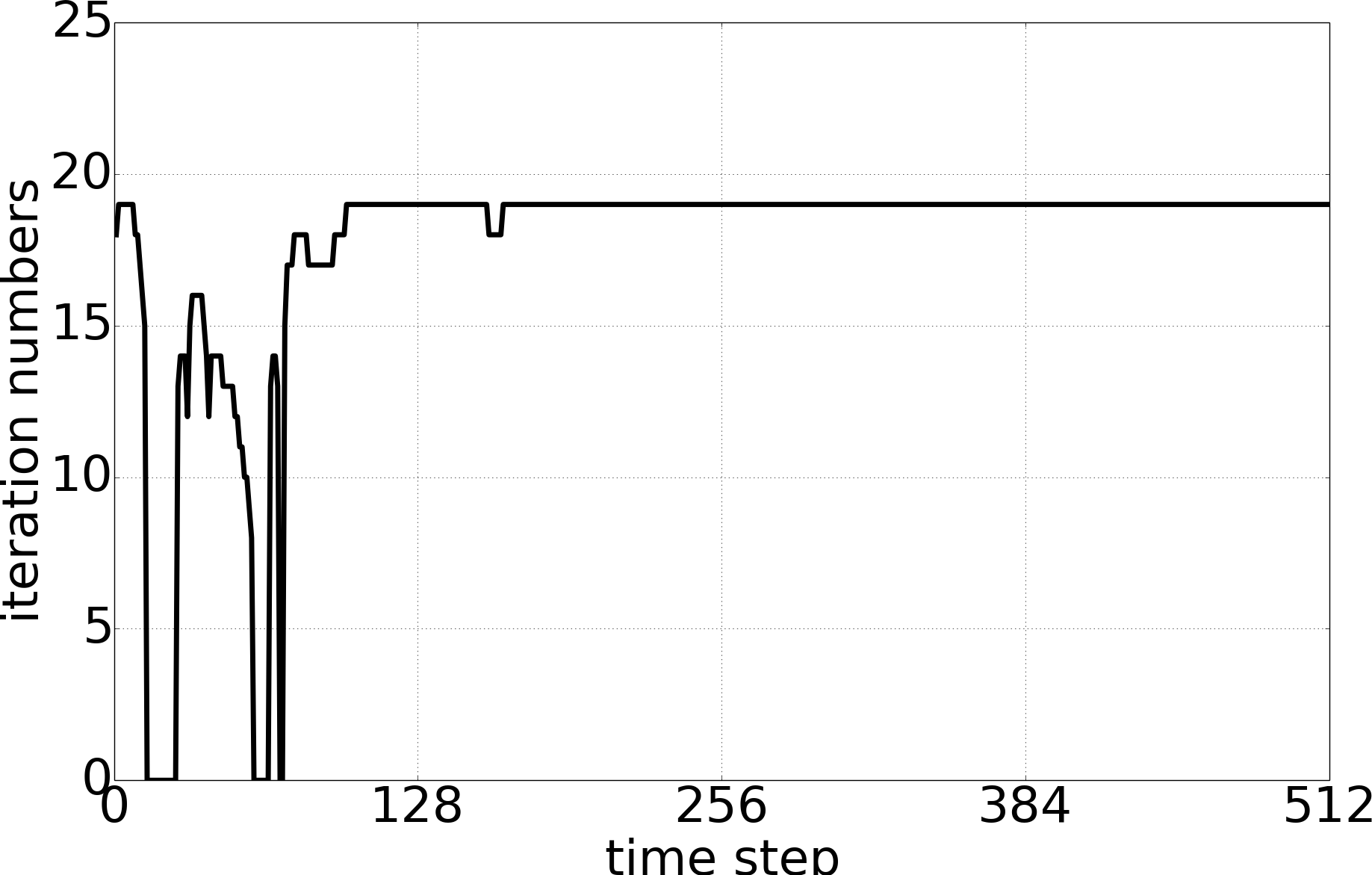} &
\includegraphics[width=0.33\textwidth]{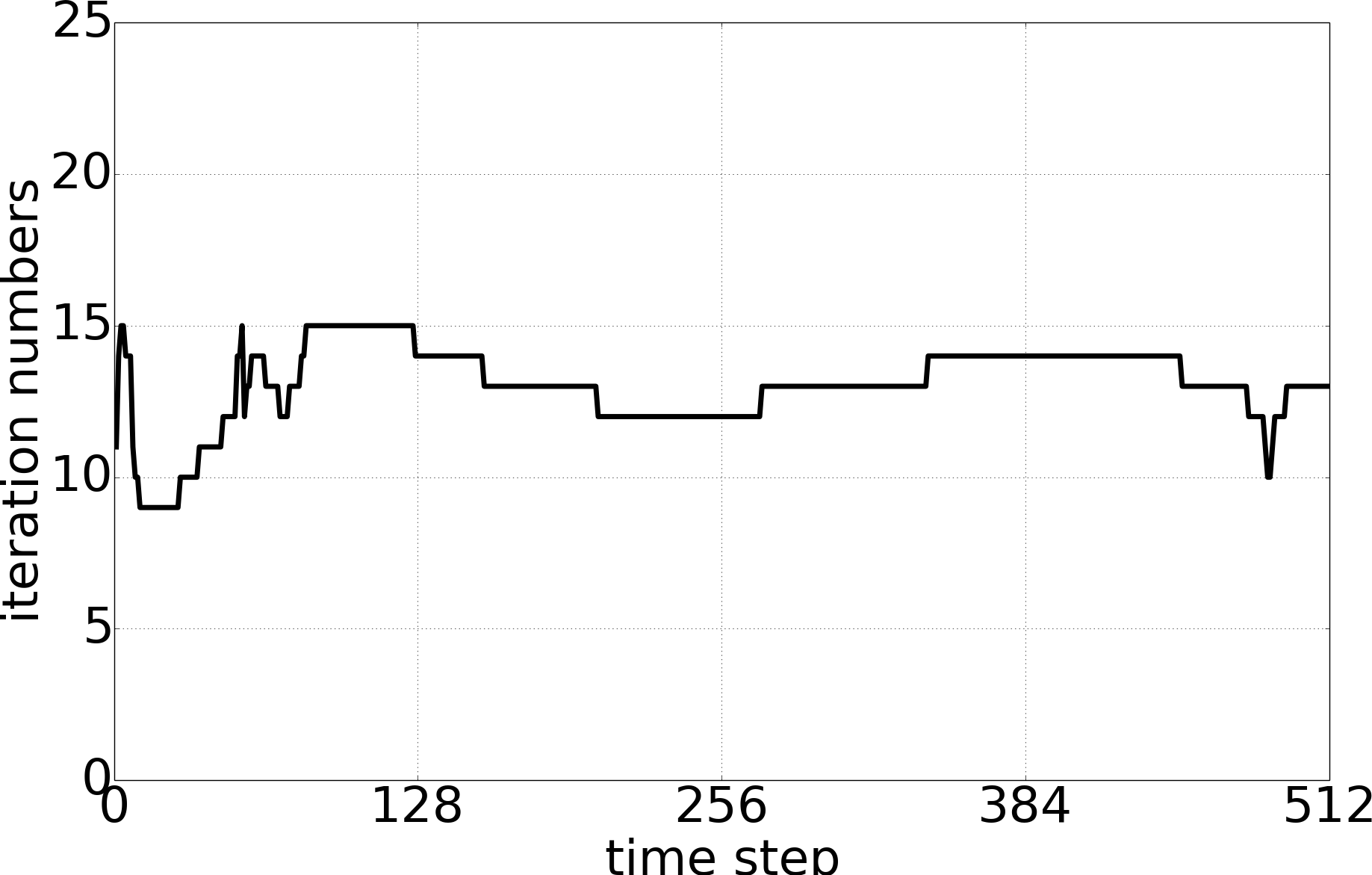} \\
\end{tabularx}
\caption{Left: the average of order parameter at each time step. Middle and right: the number of Douglas--Rachford iterations for scenario 2 and 4 at each time step. Scenario 1: constant mobility with GL polynomial potential and do not apply any limiter. The rest scenarios apply limiters. Scenario 2: constant mobility with GL polynomial potential. Scenario 3: constant mobility with FH logarithmic potential. Scenario 4: degenerate mobility with GL polynomial potential.}
\label{fig:numerical_experiments:drop_iter_and_mass}
\end{figure}

\section{Conclusion}
\label{sec:remark}
In this paper, we have analyzed the asymptotic linear convergence rate for using Douglas--Rachford splitting methods of a simple nonsmooth convex minimization, which forms a high order accurate cell average limiter.
We obtain an explicit dependence of the convergence rate on the parameters, which gives a principle of parameter selection for accelerating the asymptotic convergence rate.
Our optimization scheme is efficient and our two-stage limiting strategy is well-suited for high order accurate DG schemes for large-scale simulations.

%\appendix
%\input{Content/appendix}

\bibliographystyle{siamplain}
\bibliography{references}

\begin{thebibliography}{10}

\bibitem{bailo2023unconditional}
{\sc R.~Bailo, J.~Carrillo, S.~Kalliadasis, and S.~Perez}, {\em {Unconditional
  bound-preserving and energy-dissipating finite-volume schemes for the
  Cahn-Hilliard equation}}, Communications in Computational Physics,  (2023).

\bibitem{barrett1999finite}
{\sc J.~W. Barrett, J.~F. Blowey, and H.~Garcke}, {\em Finite element
  approximation of the {C}ahn--{H}illiard equation with degenerate mobility},
  SIAM Journal on Numerical Analysis, 37 (1999), pp.~286--318.

\bibitem{beck2017first}
{\sc A.~Beck}, {\em First-Order Methods in Optimization}, SIAM, 2017.

\bibitem{carlson2011dissipation}
{\sc A.~Carlson, M.~Do-Quang, and G.~Amberg}, {\em Dissipation in rapid dynamic
  wetting}, Journal of Fluid Mechanics, 682 (2011), pp.~213--240.

\bibitem{chambolle2016introduction}
{\sc A.~Chambolle and T.~Pock}, {\em An introduction to continuous optimization
  for imaging}, Acta Numerica, 25 (2016), pp.~161--319.

\bibitem{CHEN2019100031}
{\sc W.~Chen, C.~Wang, X.~Wang, and S.~M. Wise}, {\em Positivity-preserving,
  energy stable numerical schemes for the {C}ahn--{H}illiard equation with
  logarithmic potential}, Journal of Computational Physics: X, 3 (2019),
  p.~100031.

\bibitem{cheng2022new}
{\sc Q.~Cheng and J.~Shen}, {\em {A new Lagrange multiplier approach for
  constructing structure preserving schemes, I. Positivity preserving}},
  Computer Methods in Applied Mechanics and Engineering, 391 (2022), p.~114585.

\bibitem{cheng2022new-2}
{\sc Q.~Cheng and J.~Shen}, {\em {A new Lagrange multiplier approach for
  constructing structure preserving schemes, II. Bound preserving}}, SIAM
  Journal on Numerical Analysis, 60 (2022), pp.~970--998.

\bibitem{demanet2016eventual}
{\sc L.~Demanet and X.~Zhang}, {\em Eventual linear convergence of the
  {D}ouglas--{R}achford iteration for basis pursuit}, Mathematics of
  Computation, 85 (2016), pp.~209--238.

\bibitem{du2021maximum}
{\sc Q.~Du, L.~Ju, X.~Li, and Z.~Qiao}, {\em Maximum bound principles for a
  class of semilinear parabolic equations and exponential time-differencing
  schemes}, SIAM Review, 63 (2021), pp.~317--359.

\bibitem{ern2022invariant}
{\sc A.~Ern and J.-L. Guermond}, {\em {Invariant-Domain-Preserving High-Order
  Time Stepping: I. Explicit Runge--Kutta Schemes}}, SIAM Journal on Scientific
  Computing, 44 (2022), pp.~A3366--A3392.

\bibitem{fan2022positivity}
{\sc C.~Fan, X.~Zhang, and J.~Qiu}, {\em {Positivity-preserving high order
  finite difference WENO schemes for compressible Navier-Stokes equations}},
  Journal of Computational Physics, 467 (2022), p.~111446.

\bibitem{fortin2000augmented}
{\sc M.~Fortin and R.~Glowinski}, {\em Augmented Lagrangian Methods:
  Applications to the Numerical Solution of Boundary-value Problems}, Elsevier,
  2000.

\bibitem{frank2018finite}
{\sc F.~Frank, C.~Liu, F.~Alpak, and B.~Riviere}, {\em A finite
  volume/discontinuous {G}alerkin method for the advective {C}ahn--{H}illiard
  equation with degenerate mobility on porous domains stemming from micro-{CT}
  imaging}, Computational Geosciences, 22 (2018), pp.~543--563.

\bibitem{frank2018direct}
{\sc F.~Frank, C.~Liu, F.~O. Alpak, S.~Berg, and B.~Riviere}, {\em Direct
  numerical simulation of flow on pore-scale images using the phase-field
  method}, SPE Journal, 23 (2018), pp.~1833--1850.

\bibitem{frank2018energy}
{\sc F.~Frank, C.~Liu, A.~Scanziani, F.~O. Alpak, and B.~Riviere}, {\em An
  energy-based equilibrium contact angle boundary condition on jagged surfaces
  for phase-field methods}, Journal of Colloid and Interface Science, 523
  (2018), pp.~282--291.

\bibitem{frank2020bound}
{\sc F.~Frank, A.~Rupp, and D.~Kuzmin}, {\em {Bound-preserving flux limiting
  schemes for DG discretizations of conservation laws with applications to the
  Cahn--Hilliard equation}}, Computer Methods in Applied Mechanics and
  Engineering, 359 (2020), p.~112665.

\bibitem{girault2005discontinuous}
{\sc V.~Girault, B.~Riviere, and M.~Wheeler}, {\em A discontinuous {G}alerkin
  method with nonoverlapping domain decomposition for the {S}tokes and
  {N}avier-{S}tokes problems}, Mathematics of Computation, 74 (2005),
  pp.~53--84.

\bibitem{glowinski2003finite}
{\sc R.~Glowinski}, {\em Finite element methods for incompressible viscous
  flow}, Handbook of Numerical Analysis, 9 (2003), pp.~3--1176.

\bibitem{goldstein2009split}
{\sc T.~Goldstein and S.~Osher}, {\em {The split Bregman method for
  L1-regularized problems}}, SIAM Journal on Imaging Sciences, 2 (2009),
  pp.~323--343.

\bibitem{guermond2006overview}
{\sc J.-L. Guermond, P.~Minev, and J.~Shen}, {\em An overview of projection
  methods for incompressible flows}, Computer Methods in Applied Mechanics and
  Engineering, 195 (2006), pp.~6011--6045.

\bibitem{guermond2019invariant}
{\sc J.-L. Guermond, B.~Popov, and I.~Tomas}, {\em Invariant domain preserving
  discretization-independent schemes and convex limiting for hyperbolic
  systems}, Computer Methods in Applied Mechanics and Engineering, 347 (2019),
  pp.~143--175.

\bibitem{huang2022bound}
{\sc F.~Huang, J.~Shen, and K.~Wu}, {\em Bound/positivity preserving and
  unconditionally stable schemes for a class of fourth order nonlinear
  equations}, Journal of Computational Physics, 460 (2022), p.~111177.

\bibitem{knyazev2007majorization}
{\sc A.~V. Knyazev and M.~E. Argentati}, {\em Majorization for changes in
  angles between subspaces, {R}itz values, and graph {L}aplacian spectra}, SIAM
  Journal on Matrix Analysis and Applications, 29 (2007), pp.~15--32.

\bibitem{kuzmin2009explicit}
{\sc D.~Kuzmin}, {\em {Explicit and implicit FEM-FCT algorithms with flux
  linearization}}, Journal of Computational Physics, 228 (2009),
  pp.~2517--2534.

\bibitem{lions1979splitting}
{\sc P.-L. Lions and B.~Mercier}, {\em Splitting algorithms for the sum of two
  nonlinear operators}, SIAM Journal on Numerical Analysis, 16 (1979),
  pp.~964--979.

\bibitem{liu2019interior}
{\sc C.~Liu, F.~Frank, F.~O. Alpak, and B.~Riviere}, {\em An interior penalty
  discontinuous {G}alerkin approach for 3{D} incompressible {N}avier--{S}tokes
  equation for permeability estimation of porous media}, Journal of
  Computational Physics, 396 (2019), pp.~669--686.

\bibitem{liu2020efficient}
{\sc C.~Liu, F.~Frank, C.~Thiele, F.~O. Alpak, S.~Berg, W.~Chapman, and
  B.~Riviere}, {\em An efficient numerical algorithm for solving viscosity
  contrast {C}ahn--{H}illiard--{N}avier--{S}tokes system in porous media},
  Journal of Computational Physics, 400 (2020), p.~108948.

\bibitem{liu2022convergence}
{\sc C.~Liu, R.~Masri, and B.~Riviere}, {\em Convergence of a decoupled
  splitting scheme for the {C}ahn--{H}illiard--{N}avier--{S}tokes system}, SIAM
  Journal on Numerical Analysis (to appear),  (2023).
\newblock arXiv:2210.05625.

\bibitem{liu2022pressure}
{\sc C.~Liu, D.~Ray, C.~Thiele, L.~Lin, and B.~Riviere}, {\em A
  pressure-correction and bound-preserving discretization of the phase-field
  method for variable density two-phase flows}, Journal of Computational
  Physics, 449 (2022), p.~110769.

\bibitem{liu2023positivity}
{\sc C.~Liu and X.~Zhang}, {\em A positivity-preserving implicit-explicit
  scheme with high order polynomial basis for compressible {N}avier--{S}tokes
  equations}, arXiv:2305.05769,  (2023).

\bibitem{masri2022discontinuous}
{\sc R.~Masri, C.~Liu, and B.~Riviere}, {\em A discontinuous {G}alerkin
  pressure correction scheme for the incompressible {N}avier--{S}tokes
  equations: {S}tability and convergence}, Mathematics of Computation, 91
  (2022), pp.~1625--1654.

\bibitem{masri2023improved}
{\sc R.~Masri, C.~Liu, and B.~Riviere}, {\em Improved a priori error estimates
  for a discontinuous {G}alerkin pressure correction scheme for the
  {N}avier--{S}tokes equations}, Numerical Methods for Partial Differential
  Equations, 39 (2023), pp.~3108--3144.

\bibitem{nesterov2013gradient}
{\sc Y.~Nesterov}, {\em Gradient methods for minimizing composite functions},
  Mathematical Programming, 140 (2013), pp.~125--161.

\bibitem{qin2018implicit}
{\sc T.~Qin and C.-W. Shu}, {\em {Implicit positivity-preserving high-order
  discontinuous Galerkin methods for conservation laws}}, SIAM Journal on
  Scientific Computing, 40 (2018), pp.~A81--A107.

\bibitem{riviere2008discontinuous}
{\sc B.~Riviere}, {\em Discontinuous {G}alerkin methods for solving elliptic
  and parabolic equations: theory and implementation}, SIAM, 2008.

\bibitem{shen2012modeling}
{\sc J.~Shen}, {\em Modeling and numerical approximation of two-phase
  incompressible flows by a phase-field approach}, in Multiscale Modeling and
  Analysis for Materials Aimulation, World Scientific, 2012, pp.~147--195.

\bibitem{shen2015decoupled}
{\sc J.~Shen and X.~Yang}, {\em Decoupled, energy stable schemes for
  phase-field models of two-phase incompressible flows}, SIAM Journal on
  Numerical Analysis, 53 (2015), pp.~279--296.

\bibitem{srinivasan2018positivity}
{\sc S.~Srinivasan, J.~Poggie, and X.~Zhang}, {\em {A positivity-preserving
  high order discontinuous Galerkin scheme for convection--diffusion
  equations}}, Journal of Computational Physics, 366 (2018), pp.~120--143.

\bibitem{sun2018discontinuous}
{\sc Z.~Sun, J.~A. Carrillo, and C.-W. Shu}, {\em {A discontinuous Galerkin
  method for nonlinear parabolic equations and gradient flow problems with
  interaction potentials}}, Journal of Computational Physics, 352 (2018),
  pp.~76--104.

\bibitem{tierra2015numerical}
{\sc G.~Tierra and F.~Guill{\'e}n-Gonz{\'a}lez}, {\em Numerical methods for
  solving the {C}ahn--{H}illiard equation and its applicability to related
  energy-based models}, Archives of Computational Methods in Engineering, 22
  (2015), pp.~269--289.

\bibitem{xu2014parametrized}
{\sc Z.~Xu}, {\em Parametrized maximum principle preserving flux limiters for
  high order schemes solving hyperbolic conservation laws: one-dimensional
  scalar problem}, Mathematics of Computation, 83 (2014), pp.~2213--2238.

\bibitem{zhang2017positivity}
{\sc X.~Zhang}, {\em On positivity-preserving high order discontinuous
  {G}alerkin schemes for compressible {N}avier--{S}tokes equations}, Journal of
  Computational Physics, 328 (2017), pp.~301--343.

\bibitem{zhang2010maximum}
{\sc X.~Zhang and C.-W. Shu}, {\em On maximum-principle-satisfying high order
  schemes for scalar conservation laws}, Journal of Computational Physics, 229
  (2010), pp.~3091--3120.

\bibitem{zhang2010positivity}
{\sc X.~Zhang and C.-W. Shu}, {\em On positivity-preserving high order
  discontinuous {G}alerkin schemes for compressible {E}uler equations on
  rectangular meshes}, Journal of Computational Physics, 229 (2010),
  pp.~8918--8934.

\end{thebibliography}
\end{document}